\pgfplotsset{compat=1.15}
\theoremstyle{plain}
\newtheorem{lemma}{Lemma}[section]
\newtheorem{proposition}[lemma]{Proposition}
\newtheorem{theorem}[lemma]{Theorem}
\newtheorem{corollary}[lemma]{Corollary}
\theoremstyle{definition}
\newtheorem{definition}[lemma]{Definition}
\theoremstyle{remark}
\newtheorem{remark}[lemma]{Remark}
\numberwithin{equation}{section}
\def\to{\rightarrow}
\newcommand{\D}{\mathcal{D}}
\newcommand{\nchi}{{\raise.3ex\hbox{$\chi$}}}
\def\XXint#1#2#3{{\setbox0=\hbox{$#1{#2#3}{\int}$ }
		\vcen{\hbox{$#2#3$ }}\kern-.6\wd0}}
\newcommand{\cD}{\mathcal{D}}
\newcommand{\cI}{\mathcal{I}}
\newcommand{\cL}{\mathcal{L}}
\newcommand{\C}{\mathbb{C}}
\newcommand{\R}{\mathbb{R}}
\newcommand{\Z}{\mathbb{Z}}
\newcommand{\dd}{\, \mathrm{d}}
\newcommand{\justified}{%
	\rightskip\z@skip%
	\leftskip\z@skip}
\newcommand\restr[2]{{
		\left.\kern-\nulldelimiterspace 
		#1 
		\vphantom{\big|} 
		\right|_{#2} 
}}
\renewcommand{\d}{{\rm d}}
\newcommand{\eps}{{\varepsilon}}
\newcommand{\dxi}{{\, \mathrm{d}\xi}}
\newcommand{\var}{\varepsilon}
\newcommand{\IR}{\mathbb{R}}
\DeclareFontFamily{U}{mathx}{\hyphenchar\font45}
\DeclareFontShape{U}{mathx}{m}{n}{<-> mathx10}{}
\DeclareSymbolFont{mathx}{U}{mathx}{m}{n}
\DeclareMathAccent{\widebar}{0}{mathx}{"73}
\newcommand{\ID}{\mathcal{D}}
\newcommand{\IF}{\mathcal{F}}
\newcommand{\IC}{\mathcal{C}}
\def\d{\partial}
\title[Finite difference method for partially dissipative hyperbolic systems]{\vspace*{-2cm}Asymptotic-preserving finite difference method for partially dissipative hyperbolic systems}
\author[T. Crin-Barat $\&$ D. Manea]{Timothée Crin-Barat$^{1}$ and Dragos Manea$^{2,3,\dagger}$}
\address{\begin{center}$^1$Chair for Dynamics, Control, Machine Learning and Numerics, Friedrich-Alexander-Universität Erlangen-Nürnberg, 91058 Erlangen, Germany.\end{center}
\mbox{} \vspace{-0.5cm}
\begin{center}
$^2$Institute of Mathematics “Simion Stoilow” of the Romanian Academy,
21 Calea Grivitei Street, 010702 Bucharest, Romania.\end{center}
\mbox{} \vspace{-0.5cm}
\begin{center}$^3$The Research Institute of the University of
Bucharest - ICUB, University of Bucharest, 90-92 Sos. Panduri, 5th District, Bucharest, Romania.\end{center}}
\thanks{$^\dagger$Corresponding author: dmanea28@gmail.com}
\keywords{Partially dissipative hyperbolic systems, Asymptotic-preserving schemes, Diffusive limit, Hyperbolic hypocoercivity, Central Finite Difference}
\subjclass[2020]{35B40 , 35L45, 65M06, 65M15}
\begin{document}
\begin{abstract}
In this paper, we analyze the preservation of asymptotic properties of partially dissipative hyperbolic systems when switching to a discrete setting. We prove that one of the simplest consistent and unconditionally stable numerical methods -- the central finite difference scheme -- preserves both the asymptotic behaviour and the parabolic relaxation limit of one-dimensional partially dissipative hyperbolic systems which satisfy the Kalman rank condition.

The large time asymptotic-preserving property is achieved by conceiving time-weighted perturbed energy functionals in the spirit of the hypocoercivity theory. For the relaxation-preserving property, drawing inspiration from the observation that solutions in the continuous case exhibit distinct behaviours in low and high frequencies, we introduce a novel discrete Littlewood-Paley theory tailored to the central finite difference scheme. This allows us to prove Bernstein-type estimates for discrete differential operators and leads to a new relaxation result: the strong convergence of the discrete linearized compressible Euler system with damping towards the discrete heat equation, uniformly with respect to the mesh parameter. 
\end{abstract}

\maketitle

\vspace{-0.7cm}

\tableofcontents

\section{Introduction and informal results}
Extensive literature exists on the analysis of partially dissipative hyperbolic models, particularly focusing on their asymptotic behaviour and singular limits using a combination of Fourier and hypocoercivity techniques.
While in the continuous setting there is growing progress in understanding these phenomena, a persistent challenge arises when transitioning to a numerical context and seeking to preserve such properties in a grid-uniform manner.

In this context, our research contains both theoretical and experimental evidence for the fact that hypocoercivity and relaxation properties inherent to partially dissipative hyperbolic systems can be effectively captured by one of the simplest and unconditionally stable numerical schemes: the central finite difference method.

\subsection{Partially dissipative systems -- propagation of damping through hyperbolic dynamics}

We are concerned with the numerical analysis of partially dissipative hyperbolic systems of the form
\begin{equation}\label{SystGen}
\begin{aligned}
    &\d_t U + A\d_xU =-BU, & (t,x)\in (0,\infty)\times \R,
\end{aligned}
\end{equation}
where $U=U(t,x)\in \mathbb{R}^{N}$ ($N\geq2$) is the vector-valued unknown and $A,B$ are symmetric $N\times N$ matrices. We assume that \eqref{SystGen} has a \textit{partially dissipative structure}: The matrix $B$ takes the form 
\begin{align} \label{BD}
B=\begin{pmatrix} 0 & 0 \\ 0 & \widetilde{B}
    \end{pmatrix},
\end{align}
    where $\widetilde{B}$ is a positive definite symmetric $N_2\times N_2$  matrix $(1\leq N_{2}<N)$. 
    Under these conditions, $\widetilde{B}$ satisfies the \textit{strong dissipativity condition}: there exists a constant $\lambda>0$ such that, for all $X\in\mathbb{R}^{N_{2}}$, 
    \begin{align}\label{Strong Dissipativity}
        \langle \widetilde{B}X,X\rangle\geq \lambda |X|^2,
    \end{align}
    where $\langle\:,\rangle $ denotes the inner product on $\mathbb{R}^N$.  Based on this definition, we decompose the solution as $U=(U_1,U_2)$ where $U_1\in \R^{N-N_2}$ corresponds to the conserved components and $U_2\in \R^{N_2}$ to the dissipated ones. In general, the $L^2$-stability of these systems is unclear, as the dissipative operator $\widetilde{B}$ only acts on $U_2$.

    \cite{SK} observed that if the eigenvectors of $A$ avoid the kernel of the dissipative matrix $B$ (this requirement is called the \textit{SK condition}), then the solutions are stable in $L^2$. More recently, \cite{BZ} established a link between the SK condition, control theory and the theory of hypocoercivity \cite[]{Villani}. In particular, they constructed perturbed energy functionals permitting to recover the asymptotic behaviour of the solutions of \eqref{SystGen} under the Kalman rank condition:
\begin{definition}
A pair of matrices $(A,B)$ verifies the \textit{Kalman} rank condition if
         \begin{align} \label{K}\tag{K}
 \text{the matrix    }\:\mathcal{K}(A,B):=(B|AB|\ldots|A^{N-1}B)\quad \text{has full rank } N.
 \end{align}
 \end{definition}
In practice, this condition means that the partially dissipative effects of $B$ can be propagated to the other components through the hyperbolic dynamics of the system. In \cite{BZ} and numerous references on the topic (e.g. \cite{Yong,BHN,HanouzetNatalini,CBD2,CBD1}), the authors rely on Fourier techniques to derive the large time behaviour of the solutions. Recently, \cite{CBSZ} developed a Fourier-free method prone to tackle situations where the Fourier transform cannot be employed such as e.g. bounded domains, time and space-dependent matrices or Riemannian manifolds. Their method leads to the following \textit{natural} time-decay estimates for the solution of \eqref{SystGen}.
\begin{theorem}[{\cite[Theorem 2.1]{CBSZ}}]\label{DecayThm1}
 Let $U_{0}\in H^1(\R)$, $A$ and $B$ be symmetric $N\times N$ matrices, with $B$ as in \eqref{BD}, satisfying the Kalman rank condition. Then, for all $t>0$, the solution $U$ of  \eqref{SystGen} with the initial datum $U_{0}$ satisfies
 \begin{equation}
\label{DecayBZ}
\| U_2(t)\|_{L^2(\R)}+\|\d_x U(t)\|_{L^2(\R)} \leq C(1+t)^{-\frac{1}{2}}\|U_0\|_{H^1(\R)},
\end{equation}
where $C>0$ is a constant independent of time and $U_0$.
\end{theorem}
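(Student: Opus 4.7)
The approach is to work in Fourier space and build a hypocoercive Lyapunov functional that propagates the partial dissipation of $B$ to the full vector $U$ through the coupling with $A$. Applying the spatial Fourier transform to \eqref{SystGen}, one obtains, for every frequency $\xi \in \mathbb{R}$, the ODE
$$\partial_t \widehat U(t,\xi) = -M(\xi)\widehat U(t,\xi), \qquad M(\xi) := B + i\xi A,$$
whose plain energy identity $\tfrac{1}{2}\tfrac{d}{dt}|\widehat U|^2 = -\langle \widetilde B \widehat U_2,\widehat U_2\rangle$ dissipates only the second block. Dissipation for $\widehat U_1$ must therefore be manufactured using the hyperbolic coupling, which is precisely the role played by the Kalman condition \eqref{K}: it guarantees that $\{B,AB,\ldots,A^{N-1}B\}$ spans $\mathbb{R}^N$, and hence that $\sum_{k=0}^{N-1}|A^kB\widehat U|^2 \gtrsim |\widehat U|^2$.

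The main work, and the principal obstacle, is to assemble an auxiliary functional
$$\mathcal{L}(t,\xi) := |\widehat U(t,\xi)|^2 + \sum_{k=1}^{N-1} \varepsilon_k \,\eta_k(\xi)\, \mathrm{Im}\,\langle A^{k-1}B\,\widehat U,\,A^{k}B\,\widehat U\rangle$$
that is uniformly equivalent to $|\widehat U|^2$ and satisfies
$$\tfrac{d}{dt}\mathcal{L}(t,\xi) \lesssim -\rho(\xi)\,|\widehat U(t,\xi)|^2, \qquad \rho(\xi) := \frac{\xi^2}{1+\xi^2}.$$
Differentiating each cross-term $\Phi_k := \mathrm{Im}\langle A^{k-1}B\widehat U, A^k B \widehat U\rangle$ along the flow produces a favourable term proportional to $\xi\,|A^kB\widehat U|^2$ plus remainders that couple to $\Phi_j$ of different orders and to $\widehat U_2$. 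The delicate point is to tune the frequency weights $\eta_k(\xi)\sim \xi/(1+\xi^2)^{k}$ and the successively small parameters $\varepsilon_1\gg\varepsilon_2\gg\cdots\gg\varepsilon_{N-1}>0$ so that all remainders are absorbed into the leading dissipation coming from $\widetilde B\widehat U_2$, while the Kalman identity closes the sum. Gr\"onwall then yields the pointwise spectral estimate
$$|\widehat U(t,\xi)|^2 \lesssim e^{-c\rho(\xi)t}\,|\widehat U_0(\xi)|^2.$$

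It remains to plug this into the Plancherel identities $\|\partial_x U(t)\|_{L^2}^2 = \int_{\mathbb{R}} \xi^2|\widehat U|^2\,d\xi$ and $\|U_2(t)\|_{L^2}^2 \leq \int_{\mathbb{R}}|\widehat U|^2\,d\xi$, splitting at $|\xi|=1$. On $\{|\xi|\geq 1\}$ one has $\rho(\xi)\geq \tfrac{1}{2}$, so both integrands decay like $e^{-ct/2}$, and the corresponding contributions are bounded by $C e^{-ct/2}\|U_0\|_{H^1}^2$. On $\{|\xi|\leq 1\}$ one has $\rho(\xi)\sim \xi^2$: for $\|\partial_x U\|_{L^2}$ the factor $\sup_{\xi}\xi^2 e^{-c\xi^2 t}\lesssim (1+t)^{-1}$ delivers the contribution $C(1+t)^{-1}\|U_0\|_{L^2}^2$, and for $\|U_2\|_{L^2}$ the Chapman--Enskog-type relation $\widetilde B\widehat U_2 = -\partial_t\widehat U_2 - i\xi (A\widehat U)_2$ (or, equivalently, a refinement of $\mathcal{L}$ extracting $|\widehat U_2|^2\lesssim \xi^2|\widehat U|^2$ at low frequencies) supplies the missing $\xi^2$ factor and gives the same $(1+t)^{-1}$ rate. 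Summing the low- and high-frequency contributions produces \eqref{DecayBZ}. The algebraic balancing of the weights $\eta_k$ and parameters $\varepsilon_k$ is the delicate step; everything else reduces to standard Plancherel computations, and a Fourier-free variant replacing these spectral splittings by time-weighted perturbed energies in physical space is available along the lines of \cite{CBSZ}.
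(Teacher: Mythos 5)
Your plan contains one step that fails as written: the ordering of the matrices in the Kalman coercivity inequality and in the cross-terms. You claim that the Kalman condition gives $\sum_{k=0}^{N-1}|A^kB\widehat U|^2\gtrsim|\widehat U|^2$ and you build the correctors from $\mathrm{Im}\,\langle A^{k-1}B\,\widehat U,\,A^{k}B\,\widehat U\rangle$. Since $B$ has the block form \eqref{BD} with $1\le N_2<N$, its kernel is nontrivial, and $A^kB\,y=0$ for every $y\in\ker B$ and every $k$; hence $\sum_k|A^kB\,y|^2$ vanishes on $\ker B$ and cannot control $|y|^2$. What full rank of $\mathcal K(A,B)=(B|AB|\cdots|A^{N-1}B)$ actually gives (after transposing and using the symmetry of $A$ and $B$) is $\bigcap_k\ker(BA^k)=\{0\}$, i.e. $\sum_{k=0}^{N-1}|BA^k y|^2\gtrsim|y|^2$ --- this is exactly the norm $\mathcal N$ in \eqref{intro:normeq}. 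The same transposition matters in the correctors: differentiating $\langle BA^{k-1}\widehat U, BA^{k}\widehat U\rangle$ along $\partial_t\widehat U=-(i\xi A+B)\widehat U$ produces the good term $-i\xi\,|BA^{k}\widehat U|^2$ precisely because $BA^{k-1}\cdot A=BA^{k}$, whereas $A^{k-1}B\cdot A\neq A^{k}B$ in general (the matrices do not commute), so your version of the cross-term does not generate the coercive quantity at all. With the ordering corrected to $BA^{k}$ (as in the paper's corrector \eqref{I}), the construction is the standard Beauchard--Zuazua one and the rest of your scheme --- the weights $\eta_k(\xi)$, the cascade $\varepsilon_1\gg\cdots\gg\varepsilon_{N-1}$, the pointwise bound $|\widehat U(t,\xi)|^2\lesssim e^{-c\rho(\xi)t}|\widehat U_0(\xi)|^2$ with $\rho(\xi)=\xi^2/(1+\xi^2)$, the Plancherel splitting at $|\xi|=1$, and the Duhamel/Chapman--Enskog gain of a factor $\xi$ for $\widehat U_2$ at low frequencies --- goes through and yields \eqref{DecayBZ}.

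Beyond that fix, note that your route is genuinely different from the one the paper is built on. The theorem itself is quoted from \cite{CBSZ} without proof, but the paper's proof of its discrete analogue (Theorem \ref{thm:NumHyp}, Section \ref{sec:proof-asymptotic}) is deliberately Fourier-free: it works in physical space with the time-weighted functional $\|U\|_{h^1_h}^2+\eta_0\,t\,\|\mathcal D_h U\|_{\ell^2_h}^2+\mathcal I(t)$, whose monotonicity directly produces the $(1+t)^{-1/2}$ rate without any spectral localization; the frequency splitting you perform at $|\xi|=1$ is replaced there by the explicit weight $t$ in front of the derivative norm, and the low-frequency treatment of $U_2$ is replaced by a Duhamel argument on the damped component. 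Your Fourier-multiplier version gives the sharper pointwise spectral decay as an intermediate product, but the Fourier-free formulation is what transfers to the settings (discretizations, bounded domains, variable coefficients) that motivate the paper.
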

This proposition highlights the hypocoercive nature inherent to partially dissipative systems. Although the damping term does not directly influence every component of the system, the whole solution decays in time due to the cross-influence between the matrices $A$ and $B$ resulting from the Kalman condition \eqref{K}. The decay rate resembles that of the heat equation for $L^2$ data, but, in this hyperbolic framework, we need to assume additionally that the initial datum is in $H^1$, due to the lack of parabolic regularising effects. Moreover, in \cite{CBSZ}, it is shown that the decay \eqref{DecayBZ} is optimal for $H^1$ initial data. The lack of exponential decay essentially comes from the fact that the hyperbolic and dissipative operators in the system are of different orders.
\begin{remark}
    A classical system fitting the description \eqref{SystGen}-\eqref{BD} and verifying the Kalman rank condition \eqref{K} is the compressible Euler equations with damping, linearized around $(\bar{\rho},\bar{u})=(\bar{\rho},0)$, with $\bar{\rho}>0$. The linearisation takes the following form
\begin{equation}\label{Euler}
\left\{
\begin{aligned}
&\partial_{t}\rho+\d_x u=0, \\
&\partial_{t} u+\d_x \rho+u=0,
\end{aligned}
\right.\quad  (t,x)\in (0,\infty)\times \R,
\end{equation}
where $\rho=\rho(x,t)\geq 0$ denotes the fluid density and  $u=u(x,t)\in\mathbb{R}$ stands for the fluid velocity.
\end{remark}

\subsection{Asymptotic-preserving schemes for partially dissipative systems}

The first purpose of this paper is to prove that one of the simplest structure-preserving, i.e. consistent and stable, numerical schemes for \eqref{SystGen}, namely the one based on the central finite difference discrete operator on a uniform $h$-sized grid:
\begin{align}
\label{eq:FDcentral*}
(\ID_h U)_n&=\frac{U_{n+1}-U_{n-1}}{2h},\, n\in \Z,
\end{align}
preserves the large-time asymptotics derived in Theorem \ref{DecayThm1}. 
The choice of this particular scheme is justified by its unconditional stability for hyperbolic systems, see Proposition \ref{stabilityForSchemes} and Section \ref{sec:FD} for additional details regarding the discrete framework employed.

\smallbreak
Next, we present an informal version of our asymptotic behaviour result:
\smallbreak

\textit{The central finite difference scheme is asymptotic-preserving for the system \eqref{SystGen} in the sense that we recover the time-decay \eqref{DecayBZ} for the semi-discretized version of \eqref{SystGen}, uniformly with respect to the mesh-size parameter, when the Kalman rank condition holds.}
\smallbreak
The complete version of this result can be found in  Theorem \ref{thm:NumHyp}.

\subsection{Relaxation-preserving scheme}
\label{sec:intro-relaxation-preserving}
Up to this point, we have at hand a structure-preserving and asymptotic-preserving numerical scheme for the system \eqref{SystGen}. One of the natural further steps is to analyse whether this scheme behaves well with respect to another type of asymptotics: singular perturbations. More precisely,  \cite{YongIllPrep} showed that the system \eqref{SystGen} can be relaxed to a parabolic one in a diffusive scaling.
We will prove that, under certain regularity conditions on the initial data, the same relaxation behaviour is observed in the discrete setting, uniformly with respect to the grid width $h$.

We investigate such approximations in a simple, yet illustrative case: the discrete version of the linear compressible Euler system \eqref{Euler} with relaxation
\begin{equation}\label{mainR:inEuler} 
\left\{\begin{aligned}
   & \d_t \rho^\varepsilon + \ID_hu^\varepsilon=0,
    \\ & \varepsilon^2\d_t u^\varepsilon + \ID_h\rho^\varepsilon +u^\varepsilon=0,
\end{aligned}\right.
\end{equation}
where $\rho^\varepsilon, u^\varepsilon: (0,\infty)\times \Z\to \IR$  and $\varepsilon>0$ is the relaxation parameter. As $\varepsilon\to0$, the solutions of \eqref{mainR:inEuler} converge, at least formally, to the solutions of the discrete heat equation:
\begin{equation}\label{mainR:heat}
\left\{\begin{aligned}
   & \d_t \rho -\ID_h^2\rho=0,\\
   & u=-\ID_h \rho,
   \end{aligned}\right.
\end{equation}
where the second equation corresponds to the discrete Darcy law.

\input{plot-sinx-x}

In the continuous context, \cite{OriveZuazua06} and \cite{CBD3} showed that it is essential to analyze separately the low and high frequencies of the solutions to derive strong convergence results for such relaxation procedures. For instance, in \cite{CBD3}, the authors
 introduce a hybrid Littlewood-Paley decomposition to justify the strong convergence of the nonlinear compressible Euler system with damping toward the porous media equation in any dimension. In the present paper, to justify the relaxation-preserving property of the numerical scheme, we introduce a novel and numerically suited Littlewood-Paley decomposition. In this regard, the main challenge that arises is that the Fourier symbol of the discrete operator, which is:
\begin{equation}
\label{eq:intro-symbol}
\widehat{(\ID_h v)}(\xi)=i\frac{\sin(\xi h)}{h} \hat{v}(\xi),\quad \xi \in \left[-\frac{\pi}{h},\frac{\pi}{h}\right],
\end{equation}
becomes very small at high frequencies $\xi \sim \pm \frac{\pi}{h}$. Therefore, we are not able to uniformly approximate $\frac{\sin(\xi h)}{h}$ by $\xi$, since, in the high-frequency regime, $\left|\frac{\sin(\xi h)}{\xi h}\right|\ll 1$ (see Figure \ref{fig:plot-sinx-x}). To tackle this issue, we develop a non-standard dyadic decomposition tailored to the central finite difference operator $\D_h$. More precisely, whereas in the continuous Littlewood-Paley theory, the dyadic decomposition of the frequency domain is done in logarithmically equidistant annuli:
\begin{equation}\label{eq:intro-continuous-freq-band}
|\xi|\in \left[\frac{3}{4}2^{j},\frac{4}{3}2^{j+1}\right],
\end{equation}
in our case, we work with a numerically adapted dyadic decomposition based on non-uniform annuli of the form:
\begin{equation}\label{eq:intro-discrete-freq-band}
F_h(j):=\left\{\xi \in \left[-\frac{\pi}{h},\frac{\pi}{h}\right]:\left|\frac{\sin(\xi h)}{h}\right|\in \left[\frac{3}{4}2^{j},\frac{4}{3}2^{j+1}\right]\right\}.
\end{equation}
See Figure \ref{fig:dyadic-decompositions} for a comparison between the decomposition intervals in \eqref{eq:intro-continuous-freq-band} and \eqref{eq:intro-discrete-freq-band}.  On that figure, we remark that the numerically adapted dyadic decomposition showcases a pseudo-low-frequency regime near the boundary of the frequency domain (i.e. for $\xi \sim \pm \frac{\pi}{h}$), which will need special treatment in our analysis (for example, in the proof of Proposition \ref{prop:embed}). Moreover, we observe that the discrete high-frequency regime does not align with the continuous one, since the former is bounded for a fixed mesh width $h$. Nevertheless, since, as $h$ approaches zero, we recover the full high-frequency range, we employ a full spectrum analysis in Section \ref{sec:unif-in-epsilon-estimates} to obtain convergence results independent of $h$.

\begin{figure}[H]
    \centering
    \includegraphics[scale=0.24]{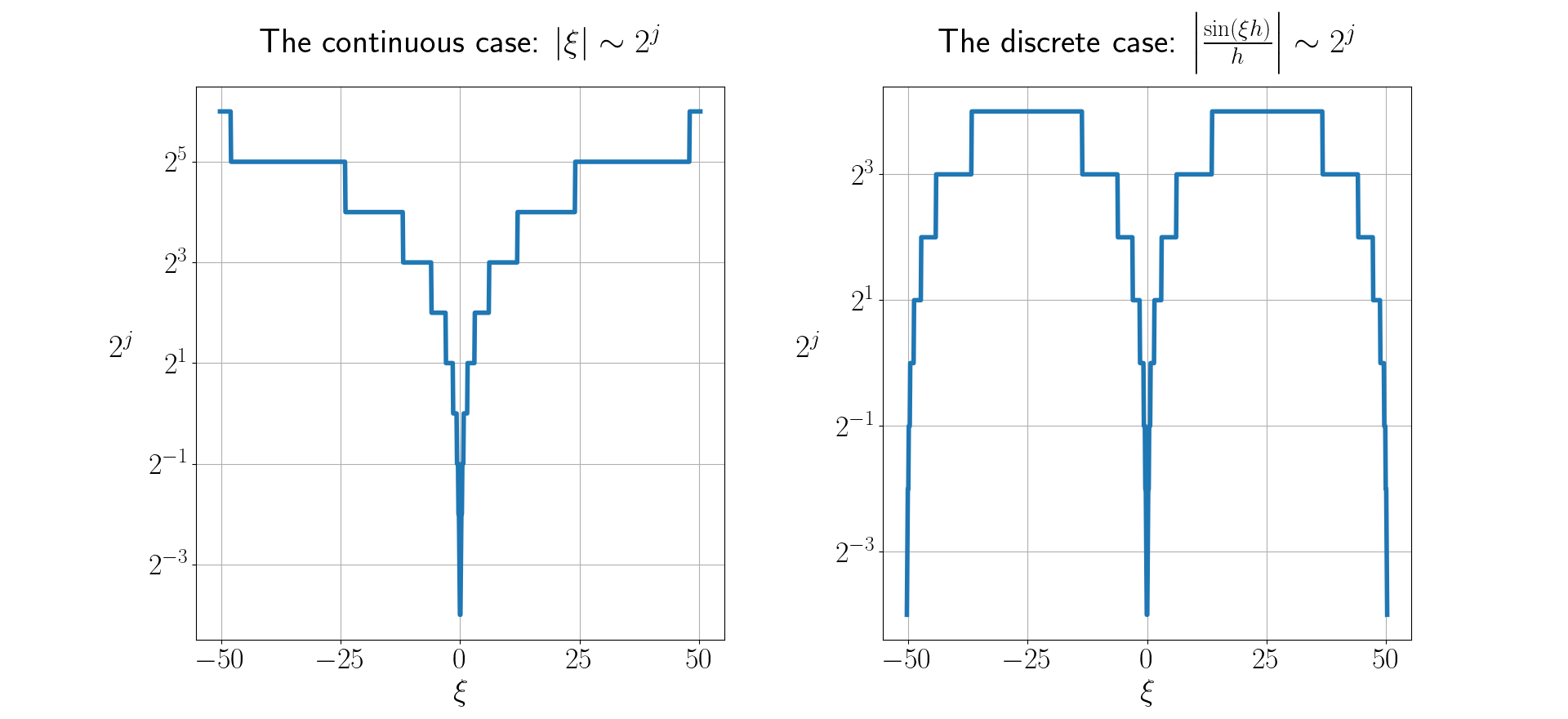}
    \caption{The decomposition of the frequency space in the continuous case \eqref{eq:intro-continuous-freq-band} (left) and in the discrete setting \eqref{eq:intro-discrete-freq-band} (right), for $h=2^{-4}$.}
    \label{fig:dyadic-decompositions}
\end{figure}

We also note that our approach differs from the discrete Littlewood-Paley decomposition outlined in \cite{HongYang2019}, which utilizes the dyadic decomposition \eqref{eq:intro-continuous-freq-band}. One significant improvement is that the numerically adapted decomposition \eqref{eq:intro-discrete-freq-band} we employ enables us to establish Bernstein-type estimates for the operator $\D_h$ in the following sense:
\[c\, 2^j \|\delta_h^j v\|_{l^2_h} \leq \|\ID_h \delta_h^j v\|_{l^2_h} \leq C\, 2^j \|\delta_h^j v\|_{l^2_h},\]
where $\delta_h^j v$ is the localization of the grid function $v$ to the frequency band \eqref{eq:intro-discrete-freq-band} -- see Definition \ref{def:localization} -- and $c,C$ are universal constants.

This leads to the following definition of homogeneous discrete Besov semi-norms: for a regularity index $s\in \R$, we define:
  \begin{equation}
    \label{eq:eq:BesovNormLFHF0}
    \|v\|_{\dot{B}_{h}^s}:=\sum_{j\in \Z} 2^{js} \|\delta_h^j v\|_{l_h^2}.
    \end{equation}
Within this framework, to split our analysis into low and high frequencies, we simply need to apply the frequency-localization linear operator $\delta^j_h$ to the system and study the solution separately for $j\ll 0$ and $j\gg0$, respectively. Furthermore, the spaces defined in \eqref{eq:eq:BesovNormLFHF0} are related to classical discrete functional spaces by the following embedding (see Proposition \ref{prop:embed}):
   \begin{equation}
   \dot{B}^{s}_h\hookrightarrow \dot{h}^s_h \quad \text{and} \quad \dot{B}^{\frac 12}_h\hookrightarrow \ell^\infty_h,
   \end{equation}
    where the homogeneous and inhomogeneous discrete Sobolev norms are defined as:
    \begin{align}   \|v\|_{\dot{h}^s_h}^2:=\|\mathcal{D}^s_hv\|_{\ell^2_h}:=\left\|\hat{v}(\xi)\left|\frac{\sin(\xi h)}{h}\right|^s\right\|_{L^2\left(-\frac{\pi}{h},\frac{\pi}{h}\right)} \quad \textrm{and} \quad \|v\|_{h^s_h}^2:=\|v\|_{\ell^2_h}^2+\|\mathcal{D}^s_h v\|_{\ell^2_h}^2.
    \end{align} 
We refer to Section \ref{sec:discrete_framework} for more information about the functional framework we use.

\medbreak
Next, we present an informal version of our relaxation-preserving result:
\smallbreak
\textit{The central finite difference scheme is relaxation-preserving for the system \eqref{mainR:inEuler} in the sense that, for all times $T>0$, the solutions of \eqref{mainR:inEuler} converge to the solutions of \eqref{mainR:heat} in the supremum and Sobolev norms at the rate $\mathcal{O}(\varepsilon^2)$, uniformly with respect to the mesh-size parameters.}

\smallbreak
The rigorous form of this result can be found in Theorem \ref{thm:NumRelax} and Corollary \ref{cor:lInftyRelax}.

\section{Main results}
\subsection{Asymptotic-preserving property of the central finite difference scheme}
In this section, we establish the counterpart of \eqref{DecayBZ} for discrete hyperbolic systems, demonstrating the preservation of the hypocoercivity property when transitioning to a numerical context. To be more specific, we derive time-decay rates for a space-discretization of \eqref{SystGen} on a uniform grid of width $h$. We consider $U:(0,\infty)\times \Z \to \IR^N$ that satisfies the following semi-discrete equation:
\begin{align}\label{mainr:Systn}
   \partial_t U(t)+ A(\ID_h U(t))&=-B U(t).
\end{align}
The next result establishes time-decay estimates for \eqref{mainr:Systn}, uniformly with respect to the mesh width.
\begin{theorem}[Numerical hypocoercivity for hyperbolic systems]\label{thm:NumHyp}
Let $U_{0}\in h_h^1$ , $A$ and $B$ be symmetric $N\times N$ matrices with $B$ as in \eqref{BD} and such that $(A,B)$ satisfies the Kalman rank condition \eqref{K}. Then, for all $t>0$, the solution $U $ of  \eqref{mainr:Systn} with the initial datum $U_{0}$ satisfies
 \begin{equation}
\label{e:decayThm1}
\| U _{2}(t)\|_{\ell^2_h}+\|\ID_hU (t)\|_{\ell^2_h} \leq C(1+t)^{-\frac{1}{2}}\|U_{0}\|_{h^1_h},
\end{equation}
where $C>0$ is a constant independent of the mesh-size parameter $h$, the time $t$ and $U_0$.
\end{theorem}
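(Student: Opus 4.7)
The plan is to apply the discrete Fourier transform on $\Z$, construct a Beauchard--Zuazua-style perturbed energy frequency-by-frequency, and then integrate in $\xi$ using a low/high symbol split. The central finite difference scheme is well-suited because it is diagonalized by the symbol $i\sigma(\xi)$ with $\sigma(\xi):=\sin(\xi h)/h$ on $[-\pi/h,\pi/h]$, reducing \eqref{mainr:Systn} to the parametric family of linear ODEs
\begin{equation*}
\partial_t \hat{U}(t,\xi)=-M(\sigma(\xi))\,\hat{U}(t,\xi),\qquad M(\sigma):=i\sigma A+B.
\end{equation*}
The Kalman matrix $\mathcal{K}(A,B)$ depends only on the coefficient matrices, so the hypothesis \eqref{K} is inherited verbatim at the symbol level, independently of $h$.

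At each fixed $\sigma\in\R$ I would build a perturbed energy of the Beauchard--Zuazua type
\begin{equation*}
\mathcal{L}_\sigma(V):=|V|^2+\sum_{k=1}^{N-1}\varepsilon_k\,f_k(\sigma)\,\mathrm{Re}\langle A^{k-1}BV,\,A^{k}BV\rangle,
\end{equation*}
with small couplings $\varepsilon_k$ and rational weights $f_k(\sigma)$ depending only on $(A,B)$. The Kalman condition makes $\mathcal{L}_\sigma$ equivalent to $|V|^2$ uniformly in $\sigma$, and the computation along the ODE yields the hypocoercive dissipation
\begin{equation*}
\frac{d}{dt}\mathcal{L}_\sigma(\hat{U}(t,\xi))\leq -c\,\lambda(\sigma(\xi))\,\mathcal{L}_\sigma(\hat{U}(t,\xi)),\qquad \lambda(\sigma):=\frac{\sigma^2}{1+\sigma^2},
\end{equation*}
producing the pointwise-in-frequency estimate $|\hat{U}(t,\xi)|^2\leq C e^{-c\lambda(\sigma(\xi))\,t}|\hat{U}_0(\xi)|^2$ with constants independent of $h$.

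By Plancherel, $\|\ID_h U(t)\|^2_{\ell^2_h}=\int|\sigma(\xi)|^2|\hat{U}|^2\,d\xi$. I would split the integral into $\{|\sigma|\leq 1\}$ (which covers both the genuine low frequencies and the pseudo-low-frequency band near $\xi=\pm\pi/h$ of Figure \ref{fig:plot-sinx-x}) and $\{|\sigma|\geq 1\}$. On the first region the elementary inequality $|\sigma|^2 e^{-c\sigma^2 t/2}\leq C/(1+t)$ gives a contribution of size $(1+t)^{-1}\|U_0\|^2_{\ell^2_h}$, while on the second region $\lambda(\sigma)\geq 1/2$ yields exponential decay in $t$ and the $|\sigma|^2$-weight is absorbed into $\|U_0\|^2_{h^1_h}$. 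For $\|U_2(t)\|^2_{\ell^2_h}$, the slow eigenmodes of $M(\sigma)$ near $\sigma=0$ have $U_2$-component of order $O(\sigma)$ (the Darcy-like algebraic relation), so that spectral decomposition gives the refined pointwise bound $|\hat{U}_2(t,\xi)|^2\lesssim \sigma^2 e^{-c\sigma^2 t}|\hat{U}_0(\xi)|^2+e^{-ct}|\hat{U}_0(\xi)|^2$; the same low/high split then yields $\|U_2(t)\|^2_{\ell^2_h}\lesssim (1+t)^{-1}\|U_0\|^2_{h^1_h}$.

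The main obstacle is genuine uniformity in $h$. Since $\sigma(\xi)=\sin(\xi h)/h$ returns to small values near the Nyquist edge $\xi=\pm\pi/h$, one obtains a pseudo-low-frequency region whose measure is $O(1)$ rather than $O(h^{-1})$, so its contribution is still controlled by $\|U_0\|^2_{\ell^2_h}\leq \|U_0\|^2_{h^1_h}$ and does not degrade as $h\to 0$. The perturbation coefficients $\varepsilon_k$ and weights $f_k(\sigma)$ in $\mathcal{L}_\sigma$ come from the purely algebraic Kalman structure of $(A,B)$ and hence carry no $h$-dependence. By staying on the Fourier side, one never invokes the blow-up bound $\|\ID_h\|_{\ell^2_h\to\ell^2_h}\leq 1/h$, so all constants in the final estimate remain universal.
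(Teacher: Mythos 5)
Your proposal is correct in outline but follows a genuinely different route from the paper. You work entirely on the Fourier side: since the central difference operator is diagonalized by the real symbol $\sigma(\xi)=\sin(\xi h)/h$, the semi-discrete system \eqref{mainr:Systn} reduces to the family of ODEs governed by $M(\sigma)=i\sigma A+B$, which is exactly the continuous Beauchard--Zuazua symbol evaluated at $\sigma$; the frequency-wise perturbed energy then yields $|\hat U(t,\xi)|\lesssim e^{-c\min(\sigma^2,1)t}|\hat U_0(\xi)|$ with constants depending only on $(A,B)$, and Plancherel plus a splitting at $|\sigma|=1$ gives \eqref{e:decayThm1}. Your key observation — that the pseudo-low-frequency band near the Nyquist edge $\xi\sim\pm\pi/h$ is handled automatically because the estimate depends only on $\sigma(\xi)$ and never on $\|\ID_h\|_{\ell^2_h\to\ell^2_h}$ — is exactly what makes the constants $h$-uniform, and it buys you a slightly stronger conclusion (pointwise exponential decay at each frequency) essentially for free from the known continuous result. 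The paper instead deliberately avoids the Fourier transform, following \cite{CBSZ}: it builds the physical-space, time-weighted Lyapunov functional \eqref{def:GeneralLyaNoF} with corrector \eqref{I}, proves $\frac{d}{dt}\cL\le 0$ directly via the integration-by-parts identity of Proposition \ref{Prop:IBP} and Lemma \ref{lem:corrector}, and reads off the decay of $\ID_h U$ from the weight $\eta_0 t\|\ID_h U\|_{\ell^2_h}^2$; this is less sharp frequency-by-frequency but is designed to extend to settings (bounded domains, variable coefficients) where the symbol calculus is unavailable. One caveat on your argument: the refined bound $|\hat U_2(t,\xi)|^2\lesssim\sigma^2e^{-c\sigma^2t}|\hat U_0|^2+e^{-ct}|\hat U_0|^2$ is the thinnest step as stated — justifying the ``slow eigenmodes have $U_2$-component $O(\sigma)$'' claim requires analytic perturbation theory of the semisimple zero eigenvalue of $B$ together with the Kalman condition to lower-bound the real parts of the slow group. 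A simpler route, entirely within your framework and parallel to what the paper does in Section \ref{sec:proof-asymptotic}, is to apply Duhamel to the damped equation $\partial_t\hat U_2+\widetilde B\hat U_2=-i\sigma(A\hat U)_2$ and insert the already-established decay of $\sigma\hat U$; this yields the same refined bound without any spectral decomposition.
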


\begin{remark} The decay rate obtained in \eqref{e:decayThm1} is sharp with respect to the one derived in the continuous case in \cite{CBSZ}. Applying Theorem \ref{thm:NumHyp} to the linearized compressible Euler system \eqref{mainR:inEuler}, we obtain, for all $t>0$, 
     \begin{equation}
\label{e:decayThm12}
\|u(t)\|_{\ell^2_h}+\|\ID_h(\rho,u) (t)\|_{\ell^2_h} \leq C(1+t)^{-\frac{1}{2}}\|(\rho_0,u_0)\|_{h^1_h}.
\end{equation}
The sharpness of the decay rate is also validated by the simulations in Section  \ref{sec:hypo-experiments}.
    \end{remark}
    \bigbreak
\noindent\textbf{Strategy of proof and comparison with the literature.}
To establish Theorem \ref{thm:NumHyp}, we construct time-weighted Lyapunov functionals inspired by the recent work 
\cite{CBSZ}. Their approach, employing various tools to analyze partially dissipative systems without relying on the Fourier transform, broadens the scope of applicability beyond standard methods, such as \cite{SK, BZ, BHN} and \cite{CBD2}. The construction of the time-weighted Lyapunov functionals is influenced by
the works \cite{Herau} and \cite{HerauNier} on the asymptotic behaviour of hypocoercive kinetic models and \cite{BZ} concerning the hypocoercivity phenomenon for hyperbolic systems.  In the present paper, the Lyapunov functionals we use closely resemble the one in \cite{CBSZ} and are tailored for the central finite difference approximation of the partially dissipative system \eqref{SystGen}. Differentiating these functionals with respect to time and employing the Kalman rank condition \eqref{K}, we derive the desired time-decay rates.
\smallbreak
Regarding the discrete asymptotic stability of partially dissipative systems, numerous studies are dedicated to formulating asymptotic-preserving numerical schemes for hypocoercive phenomena. Closely connected to our work, we highlight the contributions of 
\cite{PorrettaZuazua2016} and 
\cite{Georgoulis21}, where time-decay estimates are derived for discretized versions of the two-dimensional Kolmogorov equation, employing finite difference and finite element schemes, respectively. In the broader context of kinetics models, particularly emphasizing the Fokker-Planck equation, we refer to \cite{BlausteinFilbet24,DujardinHérauLafitte20,FilbetRodrigues17,FilbetRodriguesZakerzadeh21,FosterLohéacTran17,Bessemoulin-ChatardHerdaRey20} and references therein. Concerning the issue of stability of finite difference schemes for hyperbolic systems, we refer to \cite{Trefethen84,Coulombel09,CoulombelFaye23,MaHuangYong2023}.

\subsection{Strong relaxation limit in the semi-discrete setting}
\subsubsection{A new frequency-based discrete framework}
 The proof of our second main result -- the discrete relaxation limit -- is inspired by \cite{CBD3} pertaining to the continuous setting. In this reference, it is shown that the solutions of the nonlinear compressible Euler system converge strongly, in suitable norms, as the relaxation parameter $\varepsilon$ approaches zero, to the solutions of the porous media equation. There, the authors use a frequency-splitting method and treat the low and high frequencies in two different manners. Importantly, in their approach, the threshold between low and high frequencies is located at $1/\varepsilon$, which implies that the high-frequency regime disappears in the limit $\varepsilon\to0$. 

Drawing upon these insights, to obtain new results related to hyperbolic relaxation procedures for semi-discrete hyperbolic systems, we employ the novel construction of Besov spaces roughly described in Section \ref{sec:intro-relaxation-preserving} and rigorously introduced in Section \ref{sec:discrete-besov}.
Another key concept, inspired from \cite[Section 10.1]{Strikwerda2004}, is the truncation operator: $\mathcal{T}_h:L^2(\IR)\to \ell^2_h$,
\begin{equation}
    \label{def:truncation}
    (\mathcal{T}_hv)_n = \frac{1}{\sqrt{2\pi}}\int_{-\frac{\pi}{h}}^{\frac{\pi}{h}} e^{i\xi n h} \hat{v}(\xi) \dxi,
\end{equation}
where $\hat{v}$ is the continuous Fourier transform of $v\in L^2(\IR)$. It is notable that the discrete Fourier transform $\widehat{T_hv}(\xi)$, as introduced in Definition \ref{def:discreteFourier}, coincides with the continuous Fourier transform $\hat{v}(\xi)$ for any $\xi \in \left[-\frac{\pi}{h},\frac{\pi}{h}\right]$. Thus, the purpose of this truncation operator is to transfer functions defined on the real line to a grid of width $h$, while preserving the Fourier transform. For a comprehensive understanding of the suitability of this operator in accurately projecting functions onto the $h$-grid, interested readers can refer to \cite[Theorems 10.1.3 and 10.1.4]{Strikwerda2004}. The forthcoming result concerning the truncation operator will ultimately ensure that the constants in our relaxation result are uniform with respect to the grid width $h$.

\begin{theorem}[Uniform Besov estimates with respect to the grid width]
\label{thm:unif_Besov_estimate}
For every $s'>0$ and $s\in (0,s')$ there exists a constant $C_{s',s}>0$ such that, for every $h>0$ and every $u\in H^{s}(\IR)$, we have
\begin{equation}
    \label{eq:estimateBesovUnif_h}
    \|\mathcal{T}_h v\|_{\dot{B}^{s}_{h}}\leq C_{s',s}\|v\|_{H^{s'}(\IR)}.
\end{equation}
\end{theorem}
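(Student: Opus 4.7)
The plan is to reduce the claim to a Fourier-side estimate via Plancherel and then perform a real-interpolation type split on the Littlewood--Paley index $j$. By the definition of the truncation operator in \eqref{def:truncation}, $\widehat{\mathcal{T}_h v}(\xi)=\hat v(\xi)$ for all $\xi\in(-\pi/h,\pi/h)$. Since $\delta_h^j$ is a Fourier multiplier essentially supported on the band $F_h(j)$, Plancherel on the grid gives
\begin{equation*}
\|\delta_h^j\mathcal{T}_h v\|_{\ell^2_h}^2\leq C\int_{F_h(j)}|\hat v(\xi)|^2\,d\xi.
\end{equation*}

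From this pointwise-in-$j$ estimate I would extract two summed bounds. The bounded overlap of the family $\{F_h(j)\}_j$ inside $(-\pi/h,\pi/h)$, combined with the almost-orthogonality of the $\delta_h^j$'s, yields
\begin{equation*}
\sum_{j\in\Z}\|\delta_h^j\mathcal{T}_h v\|_{\ell^2_h}^2\leq C\,\|\hat v\|_{L^2(-\pi/h,\pi/h)}^2\leq C\,\|v\|_{L^2(\R)}^2.
\end{equation*}
On the other hand, by the definition of $F_h(j)$ and the elementary inequality $|\sin(\xi h)/h|\leq |\xi|$, one has $2^j\leq C\,|\xi|$ on $F_h(j)$, hence
\begin{equation*}
\sum_{j\in\Z}2^{2js'}\|\delta_h^j\mathcal{T}_h v\|_{\ell^2_h}^2\leq C\int_{-\pi/h}^{\pi/h}|\xi|^{2s'}|\hat v(\xi)|^2\,d\xi\leq C\,\|v\|_{H^{s'}(\R)}^2.
\end{equation*}

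I would then fix an integer $j_0$, to be chosen, and split the Besov sum at level $j_0$. Cauchy--Schwarz in $j$ together with the two bounds above gives, using $s>0$ for the low part and $s<s'$ for the high part,
\begin{equation*}
\sum_{j\leq j_0}2^{js}\|\delta_h^j\mathcal{T}_h v\|_{\ell^2_h}\leq C_{s}\,2^{j_0 s}\|v\|_{L^2(\R)},\qquad \sum_{j> j_0}2^{js}\|\delta_h^j\mathcal{T}_h v\|_{\ell^2_h}\leq C_{s,s'}\,2^{j_0(s-s')}\|v\|_{H^{s'}(\R)}.
\end{equation*}
Choosing $j_0\in\Z$ closest to $\tfrac{1}{s'}\log_2(\|v\|_{H^{s'}}/\|v\|_{L^2})$ balances the two contributions and produces, via Young's inequality, the interpolation bound
\begin{equation*}
\|\mathcal{T}_h v\|_{\dot{B}^{s}_h}\leq C_{s,s'}\|v\|_{L^2(\R)}^{1-s/s'}\|v\|_{H^{s'}(\R)}^{s/s'}\leq C_{s,s'}\|v\|_{H^{s'}(\R)},
\end{equation*}
with a constant independent of $h$, which is precisely \eqref{eq:estimateBesovUnif_h}.

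The main obstacle is verifying the two summed estimates with constants genuinely uniform in $h$, which rests on the geometry of the band $F_h(j)$: each such band splits into a genuine near-zero piece and a pseudo-low-frequency piece near $\pm\pi/h$, and it is crucial that the inequality $|\sin(\xi h)/h|\leq|\xi|$ be used in this direction, so that the pseudo-low-frequency pieces are absorbed by the $H^{s'}$-norm of $v$ (the reverse estimate is false on those pieces). Once the almost-orthogonality of the $\delta_h^j$ and their Bernstein-type localization at level $2^j$ are available from Definition \ref{def:localization}, what remains is essentially the standard proof of the embedding $H^{s'}(\R)\hookrightarrow\dot{B}^{s}_{2,1}(\R)$ for $0<s<s'$, transplanted to the discrete setting.
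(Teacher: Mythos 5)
Your proposal is correct, and it hinges on exactly the same key geometric observation as the paper: on the band $F_h(j)$ one has $\frac{3}{4}2^j\leq\left|\frac{\sin(\xi h)}{h}\right|\leq|\xi|$, so $|\xi|\gtrsim 2^j$ there, which is what lets the pseudo-low-frequency pieces near $\pm\pi/h$ be absorbed by the $H^{s'}$-norm uniformly in $h$. Where you diverge is the summation mechanism. The paper's proof is more elementary at that stage: it bounds each block individually, $\|\delta_h^j\mathcal{T}_h v\|_{\ell^2_h}\leq \frac{C_{s'}}{1+2^{js'}}\|v\|_{H^{s'}(\IR)}$ (by inserting the weight $1+|\xi|^{2s'}$ under the integral and using $|\xi|\geq\frac34 2^j$ on $\mathrm{supp}\,\varphi_j$), and then simply sums the numerical series $\sum_{j\in\Z}2^{js}/(1+2^{js'})$, which converges precisely when $0<s<s'$. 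You instead establish the two square-summed estimates $\sum_j\|\delta_h^j\mathcal{T}_h v\|_{\ell^2_h}^2\lesssim\|v\|_{L^2}^2$ and $\sum_j 2^{2js'}\|\delta_h^j\mathcal{T}_h v\|_{\ell^2_h}^2\lesssim\|v\|_{H^{s'}}^2$ (both legitimate, since $0\leq\varphi_j\leq1$, $\sum_j\varphi_j=1$ and the bands have overlap at most two), split the Besov sum at a $v$-dependent level $j_0$, and optimize; this costs Cauchy--Schwarz in $j$ and the balancing step, but buys the sharper interpolation-type inequality $\|\mathcal{T}_h v\|_{\dot B^s_h}\lesssim\|v\|_{L^2}^{1-s/s'}\|v\|_{H^{s'}}^{s/s'}$, of which \eqref{eq:estimateBesovUnif_h} is a corollary since $\|v\|_{L^2}\leq\|v\|_{H^{s'}}$. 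Both routes are uniform in $h$ for the same reason: the only $h$-dependence enters through the identity $\widehat{\mathcal{T}_h v}=\hat v$ on $\left[-\frac{\pi}{h},\frac{\pi}{h}\right]$ and the $h$-independent inequality $\left|\frac{\sin(\xi h)}{h}\right|\leq|\xi|$, used in the correct direction.
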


Furthermore, the discrete Besov spaces that we employ can be embedded in the classical Sobolev and $\ell^\infty_h$ spaces as follows.
\begin{proposition}\label{prop:embed}
For every $s\in \R$, there exists a constant $C_s>0$ depending only on $s$ such that, for every $v\in \dot{B}_{h}^{s}$, the following inequality holds true:
\begin{equation}
    \|v\|_{\dot{h}_h^s} \leq C_s \|v\|_{\dot{B}_{h}^{s}}.
\end{equation}
Moreover, in the particular case $s=\frac{1}{2}$, the discrete Besov space is continuously embedded in $l_h^\infty$:
\begin{equation}
\label{eq:l-infty-embed}
\|v\|_{l^\infty_h} \leq C \|v\|_{\dot{B}_{h}^{\frac{1}{2}}},
\end{equation}
where $C$ is a universal constant.
\end{proposition}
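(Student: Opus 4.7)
My plan is to prove both embeddings by passing through the frequency side via the discrete Fourier transform and reducing everything to sharp measure estimates for the dyadic bands $F_h(j)$ introduced in \eqref{eq:intro-discrete-freq-band}. The whole argument is a discrete, carefully bookkept version of the classical Littlewood--Paley embeddings $\dot B^s_{2,1} \hookrightarrow \dot H^s$ and $\dot B^{d/2}_{2,1} \hookrightarrow L^\infty$, with the single nonstandard feature being that $F_h(j)$ has two ``mirror'' components near the edges $\xi=\pm\pi/h$.

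For the first inequality, I would start from the Fourier definition of $\|v\|_{\dot h_h^s}$ and split the integral over the supports of the localizers: writing $v=\sum_{j} \delta_h^j v$ in the discrete-Fourier sense, Plancherel gives
\begin{equation*}
\|v\|_{\dot h_h^s}^2 \;=\; \int_{-\pi/h}^{\pi/h} |\hat v(\xi)|^2 \left|\tfrac{\sin(\xi h)}{h}\right|^{2s} d\xi
\;\lesssim\; \sum_{j} 2^{2js} \|\delta_h^j v\|_{\ell_h^2}^{\,2},
\end{equation*}
where the key point is that on $F_h(j)$ one has $|\sin(\xi h)/h|\sim 2^j$ by construction, and the family $\{F_h(j)\}_j$ has uniformly bounded overlap (this is built into the choice of ratios $3/4$ and $4/3$, and should be recorded as a small lemma in the section defining $\delta_h^j$). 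Then I use $\sum_j a_j^2\le(\sum_j a_j)^2$ for $a_j=2^{js}\|\delta_h^j v\|_{\ell_h^2}\ge 0$ to conclude $\|v\|_{\dot h_h^s}\lesssim \|v\|_{\dot B_h^s}$.

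For the $\ell^\infty_h$ embedding, the plan is a discrete Bernstein inequality piece by piece. From Fourier inversion and Cauchy--Schwarz,
\begin{equation*}
\|\delta_h^j v\|_{\ell^\infty_h} \;\le\; \tfrac{1}{\sqrt{2\pi}}\int_{F_h(j)} |\hat v(\xi)|\,d\xi \;\le\; \tfrac{1}{\sqrt{2\pi}}|F_h(j)|^{1/2}\|\delta_h^j v\|_{\ell^2_h},
\end{equation*}
so the whole argument reduces to showing $|F_h(j)|^{1/2}\lesssim 2^{j/2}$ uniformly in $j$ and $h$. A triangle inequality over $j$ then yields \eqref{eq:l-infty-embed}. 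This measure bound is where the nonstandard geometry of Figure~\ref{fig:plot-sinx-x} enters: $F_h(j)$ is the preimage of an interval of length $\sim 2^j$ under $\xi\mapsto|\sin(\xi h)/h|$, and since this map has nonzero derivative away from $\xi=\pm\pi/(2h)$, the change-of-variables estimate gives length $\sim 2^j$ for each branch of $F_h(j)$ in the ``genuine'' low-frequency zone near $\xi=0$ and in the pseudo-low-frequency zone near $\xi=\pm\pi/h$. The only delicate range is $2^j$ close to the maximal value $1/h$: near $\xi=\pm\pi/(2h)$ the derivative vanishes, but a local quadratic expansion $\sin(\xi h)/h = 1/h - (\xi h - \pi/2)^2/(2h)+O(\cdots)$ shows the preimage has measure $\sim \sqrt{2^j/h}\lesssim 2^{j/2}$ in that regime too, since $j\le \log_2(1/h)+O(1)$.

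The main obstacle I anticipate is handling the edge case cleanly, namely writing down $|F_h(j)|\lesssim 2^j$ as a single uniform estimate that covers simultaneously: (i) the generic low-frequency branches near $0$ and $\pm\pi/h$ where the derivative of $\sin(\xi h)/h$ is bounded away from $0$, and (ii) the topmost dyadic band where $2^j\sim 1/h$ and the derivative degenerates at $\pm\pi/(2h)$. I would split the argument according to whether $2^{j+1}\le \tfrac{1}{2h}$ (branch (i), direct inverse function estimate) or $2^{j+1}>\tfrac{1}{2h}$ (branch (ii), quadratic Taylor expansion around $\pm\pi/(2h)$), and in case (ii) use the crude bound $|F_h(j)|\le 2\pi/h\lesssim 2^{j}$. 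Once this geometric lemma is in place, both statements of Proposition~\ref{prop:embed} follow as outlined, with explicit universal constants (independent of $h$) — this uniformity is what is needed later to pass to the limit $h\to 0$ in Section~\ref{sec:unif-in-epsilon-estimates}.
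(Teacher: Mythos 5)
Your proposal is correct and follows the same skeleton as the paper's proof: the Sobolev embedding comes from the partition of unity (the paper invokes Minkowski's inequality where you use bounded overlap plus $\ell^1\hookrightarrow\ell^2$; both work), and the $\ell^\infty_h$ embedding comes from Fourier inversion and Cauchy--Schwarz, reducing everything to the measure bound $|F_h(j)|\lesssim 2^j$. Where you genuinely diverge is in the proof of that measure bound. The paper avoids derivatives and change of variables altogether: fixing $c\in(0,\pi/2)$ and using the two-sided bound $M_c\le \sin x/x\le 1$ on $[-\pi+c,\pi-c]$, it shows directly that the portion of $F_h(j)$ with $\xi h\in[-\pi+c,\pi-c]$ is \emph{contained} in intervals of the form $\pm\left[\frac34 2^j,\frac{8}{3M_c}2^j\right]$, and handles the edge zones $\xi h\in\pm[\pi-c,\pi]$ via the reflection $\sin(\pi-x)=\sin x$, which transports the same containment to intervals for $\frac{\pi}{h}-|\xi|$; the degeneracy of $\cos(\xi h)$ at $\xi h=\pm\frac{\pi}{2}$ never enters because nothing is differentiated. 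Your route --- inverse-function estimates on the monotone branches plus the crude bound $|F_h(j)|\le 2\pi/h\lesssim 2^j$ for the top bands where $2^{j+1}>\frac{1}{2h}$ --- is equally valid and arguably more transparent about why the topmost band is harmless. Two small points to tighten: (i) in your case $2^{j+1}\le\frac{1}{2h}$ the phrase ``nonzero derivative away from $\pm\pi/(2h)$'' should be made quantitative, but this is automatic since on the preimage $|\sin(\xi h)|\le \frac{8}{3}2^j h\le \frac23$, hence $|\cos(\xi h)|\ge\sqrt{5}/3$, which is exactly the lower bound your change-of-variables step needs; (ii) the measure of the critical preimage near $\pm\pi/(2h)$ is $\sim\sqrt{2^j/h}\lesssim 2^j$ (not $\lesssim 2^{j/2}$; the $2^{j/2}$ is the bound on $|F_h(j)|^{1/2}$), and in any case your crude bound already covers that regime.
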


\subsubsection{Discrete strong relaxation of the incompressible linearized Euler system}

Before presenting our relaxation result, we introduce the concept of $(s,h)$-truncated initial data, which allows us to essentially use a single initial data in \eqref{mainR:inEuler} and \eqref{mainR:heat}, regardless of the grid width $h$. 
\begin{definition}[$(s',h)$-truncation]
\label{def:h-truncation}
Let $\varepsilon>0$ and $s'>2$. We say the initial data $(\rho_0^*,u_0^*)$ of \eqref{mainR:inEuler} and $\rho_0$ of \eqref{mainR:heat} are  $(s',h)$\textit{-truncated of order} $\varepsilon^2$ if there exist $\widetilde{\rho_0}\in H^{s'-2}(\IR)$ and $\widetilde{\rho_0^*},\widetilde{u_0^*}\in H^{s'}(\IR)$ such that the following relations hold:
\begin{enumerate}[label={(\roman*)}]
    \item $\|\widetilde{\rho_0}-\widetilde{\rho_0^*}\|_{H^{s'-2}(\IR)}<\varepsilon^2$,
    \item $\rho_0= \mathcal{T}_h(\widetilde{\rho_0})$,
    \item $\rho_0^*= \mathcal{T}_h(\widetilde{\rho_0^*})$,
    \item $u_0^*= \mathcal{T}_h(\widetilde{u_0^*})$.
\end{enumerate}
\end{definition}

The above definition ensures that the initial data for the discrete Cauchy problems \eqref{mainR:inEuler} and \eqref{mainR:heat} are obtained by truncating sufficiently regular functions defined on the real line.
\smallbreak
In order to state our result regarding the relaxation limit from \eqref{mainR:inEuler} to \eqref{mainR:heat}, we need the following notation:
for any Banach space $X$, $T>0$ and $p\in [1,\infty]$, we denote by $L^{p}_T(X)$ the set of measurable functions $g:[0,T]\rightarrow X$ such that $t\mapsto \|g(t)\|_{X}$ is in $L^{p}(0,T)$.

\begin{theorem}[Numerical relaxation limit]\label{thm:NumRelax}
Let $\varepsilon,h>0$, $s'>2$ and $s\in (2,s')$. We assume that $(\rho_0^*,u_0^*,\rho_0)$ are $(s',h)$-truncated of order $\varepsilon^2$ and we denote $(\rho^\varepsilon,u^\varepsilon)$ and $\rho$ the corresponding solution of \eqref{mainR:inEuler} and \eqref{mainR:heat}, respectively. Then, for every time $T>0$, $\rho^\varepsilon$ converges strongly toward $\rho$ in the following sense:
 \begin{align}
     \label{StrongRelax}
        \|(\rho^\varepsilon-\rho)(T)\|_{\dot{B}^{s-2}_{h}}+\|\rho^\varepsilon-\rho\|_{L^1_T(\dot{B}^{s}_{h})}+\|\ID_h\rho^\varepsilon+u^\varepsilon\|_{L^1_T(\dot{B}^{s-1}_{h})}\leq C\varepsilon^2,
 \end{align}
where $C=C_1\left(1+\|(\widetilde{\rho_0^*},\varepsilon \widetilde{u_0^*})\|_{H^{s'}}+\|\widetilde{u_0^*}\|_{H^{s'-1}}\right)$, with $C_1>0$ a constant independent of $h$, $\varepsilon$, $T$ and the initial data.
\end{theorem}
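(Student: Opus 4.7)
My plan is to set up the error system, frequency-localize using $\delta_h^j$, and build a hypocoercive Lyapunov functional for each dyadic block. Let $\delta\rho := \rho^\varepsilon-\rho$ and $\delta u := u^\varepsilon-u$, where $u=-\ID_h\rho$ is the Darcy velocity. Subtracting \eqref{mainR:heat} from \eqref{mainR:inEuler} and using $\partial_t u = -\ID_h^3\rho$, the pair $(\delta\rho,\delta u)$ satisfies the same damped system with an $\mathcal{O}(\varepsilon^2)$ forcing:
\begin{equation*}
\partial_t \delta\rho + \ID_h \delta u = 0, \qquad
\varepsilon^2 \partial_t \delta u + \ID_h \delta\rho + \delta u = \varepsilon^2 \ID_h^3 \rho.
\end{equation*}
A direct computation, combined with $(i)$ of Definition~\ref{def:h-truncation} and Theorem~\ref{thm:unif_Besov_estimate}, gives $\|\delta\rho(0)\|_{\dot B^{s-2}_h}=\mathcal{O}(\varepsilon^2)$, while $\|\varepsilon\delta u(0)\|_{\dot B^{s-1}_h}$ is uniformly bounded by a constant of the form announced in the statement (only $\widetilde{\rho_0^*}$ enters here, after absorbing the $\varepsilon^2$ discrepancy between $\widetilde{\rho_0}$ and $\widetilde{\rho_0^*}$).

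Applying $\delta_h^j$ to the error system and working block by block, I would then introduce, in the spirit of \cite{CBSZ,CBD3},
\begin{equation*}
\mathcal{L}_j^2 := \|\delta_h^j\delta\rho\|_{\ell^2_h}^2 + \varepsilon^2\|\delta_h^j\delta u\|_{\ell^2_h}^2 + 2\eta_j \varepsilon^2 \langle \ID_h \delta_h^j\delta\rho,\, \delta_h^j\delta u\rangle_{\ell^2_h},
\end{equation*}
with $\eta_j$ a small, block-dependent parameter. Proposition~\ref{prop:embed} combined with the construction of the bands $F_h(j)$ provides the uniform-in-$h$ Bernstein-type equivalence $\|\ID_h v\|_{\ell^2_h}\sim 2^j\|v\|_{\ell^2_h}$ on block $j$, so that $\mathcal{L}_j^2$ stays equivalent to its diagonal part provided $\eta_j \varepsilon 2^j \ll 1$. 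Differentiating in time and regrouping the dissipative contributions yields, for $\eta_j\sim\min(1,\varepsilon^{-2}2^{-2j})$,
\begin{equation*}
\tfrac12 \tfrac{d}{dt}\mathcal{L}_j^2 + c\,\lambda_j\,\mathcal{L}_j^2 \;\leq\; C\varepsilon^2\, \|\delta_h^j\ID_h^3\rho\|_{\ell^2_h}\,\|\delta_h^j\delta u\|_{\ell^2_h},
\end{equation*}
with $\lambda_j\sim 2^{2j}$ in the heat-like regime $2^j\varepsilon\le 1$ and $\lambda_j\sim\varepsilon^{-2}$ in the overdamped regime $2^j\varepsilon\ge 1$; these are the two decay rates read off from the Fourier symbol.

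To close the estimate, I would exploit the explicit representation $\widehat\rho(\xi,t)=e^{-\sigma(\xi)^2 t}\widehat{\rho_0}(\xi)$ with $\sigma(\xi)=\sin(\xi h)/h$ and the Bernstein bound to get $\|\delta_h^j\ID_h^3\rho(t)\|_{\ell^2_h}\lesssim 2^{3j}e^{-c\,2^{2j}t}\|\delta_h^j\rho_0\|_{\ell^2_h}$. Plugging this forcing into the Duhamel form of the block inequality above, integrating in time, and multiplying by $2^{j(s-2)}$ for the pointwise-in-time term or by $2^{js}$ and $2^{j(s-1)}$ for the two $L^1_T$ terms, the sum over $j\in\Z$ reassembles into the three norms on the left-hand side of \eqref{StrongRelax}. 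A final application of Theorem~\ref{thm:unif_Besov_estimate} transfers the resulting discrete Besov norms of the initial data into the continuous Sobolev norms of $\widetilde{\rho_0^*}$ and $\widetilde{u_0^*}$, uniformly in $h$, yielding the advertised bound.

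The main obstacle is the calibration of $\eta_j$ and the absorption of the cross term so that $\mathcal{L}_j$ is uniformly equivalent to the naive energy while simultaneously providing the correct coercivity in both frequency regimes. In the continuous setting this reduces to a scalar manipulation in $|\xi|$, whereas here the role of $|\xi|$ must be played by the truncated symbol $|\sin(\xi h)/h|$, which degenerates as $\xi\to\pm\pi/h$. The numerically adapted decomposition \eqref{eq:intro-discrete-freq-band} is precisely designed to neutralize this degeneracy block by block, so that the scalar calculation goes through; nevertheless, matching the two regimes around $2^j\varepsilon\sim 1$ and handling the pseudo-low-frequency window near $\xi=\pm\pi/h$ require blockwise use of the discrete Bernstein inequality rather than any naive transfer of the continuous hypocoercive arguments.
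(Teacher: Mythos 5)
Your reduction to the error system is correct (indeed $\partial_t\delta\rho+\ID_h\delta u=0$ and $\varepsilon^2\partial_t\delta u+\ID_h\delta\rho+\delta u=\varepsilon^2\ID_h^3\rho$), but the block-wise hypocoercive Lyapunov functional you build on it cannot deliver the rate $\mathcal{O}(\varepsilon^2)$, only $\mathcal{O}(\varepsilon)$. The data are ill-prepared: $\delta u(0)=u_0^*+\ID_h\rho_0$ is $\mathcal{O}(1)$, so $\mathcal{L}_j(0)\geq \varepsilon\|\delta_h^j\delta u(0)\|_{\ell^2_h}=\mathcal{O}(\varepsilon)$, and any Gr\"onwall/Duhamel bound of the form $\|\delta_h^j\delta\rho(T)\|_{\ell^2_h}\leq \mathcal{L}_j(T)\leq e^{-c\lambda_jT}\mathcal{L}_j(0)+\dots$ then produces an $\mathcal{O}(\varepsilon)$ contribution to $\|(\rho^\varepsilon-\rho)(T)\|_{\dot B^{s-2}_h}$ that does not improve (take $T$ small). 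This is not a technicality about calibrating $\eta_j$: any quadratic functional that places $\|\delta\rho_j\|_{\ell^2_h}$ and $\varepsilon\|\delta u_j\|_{\ell^2_h}$ (or $\varepsilon\|\delta w_j\|_{\ell^2_h}$) on the same footing inherits the $\mathcal{O}(\varepsilon)$ size of the initial layer. The quadratic rate requires decoupling the slow and fast modes: the paper estimates $\bar\rho=\rho^\varepsilon-\rho$ through the forced heat equation $\partial_t\bar\rho-\ID_h^2\bar\rho=-\ID_h w^\varepsilon$ with $w^\varepsilon=\ID_h\rho^\varepsilon+u^\varepsilon$, and the smallness comes from $\|w^\varepsilon\|_{L^1_T(\dot B^{s-1}_h)}=\mathcal{O}(\varepsilon^2)$ — a consequence of the coefficient $\varepsilon^{-2}$ in front of $\|w^\varepsilon\|_{L^1_T}$ in the low-frequency a priori estimate and of Bernstein in high frequencies — even though $\|w^\varepsilon(0)\|$ is $\mathcal{O}(1)$. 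The $O(1)$ initial mismatch then only enters $\delta\rho$ through its time integral over the initial layer of length $\varepsilon^2$, which is where the extra factor of $\varepsilon$ is gained.

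A second, independent obstruction is the regularity of the source $\varepsilon^2\ID_h^3\rho$. Your bound $\|\delta_h^j\ID_h^3\rho(t)\|_{\ell^2_h}\lesssim 2^{3j}e^{-c2^{2j}t}\|\delta_h^j\rho_0\|_{\ell^2_h}$ gives $\int_0^T\|\delta_h^j\ID_h^3\rho\|_{\ell^2_h}\,dt\lesssim 2^{j}\|\delta_h^j\rho_0\|_{\ell^2_h}$, so after summing you need $\|\rho_0\|_{\dot B^{s}_h}$ uniformly in $h$. But $\rho_0=\mathcal{T}_h\widetilde{\rho_0}$ with $\widetilde{\rho_0}\in H^{s'-2}$ only (Definition \ref{def:h-truncation}), and Theorem \ref{thm:unif_Besov_estimate} controls $\|\mathcal{T}_h\widetilde{\rho_0}\|_{\dot B^{\sigma}_h}$ only for $\sigma<s'-2$, while the hypotheses allow $s>s'-2$ (e.g.\ $s'=3$, $s=5/2$); the $\varepsilon^2$-discrepancy between $\widetilde{\rho_0}$ and $\widetilde{\rho_0^*}$ is likewise measured only in $H^{s'-2}$ and cannot be absorbed at regularity $s$. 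The same issue already affects your claim that $\varepsilon\|\delta u(0)\|_{\dot B^{s-1}_h}$ is uniformly bounded, since $\|\ID_h\rho_0\|_{\dot B^{s-1}_h}\sim\|\rho_0\|_{\dot B^{s}_h}$. The paper's route never differentiates the heat solution: the heat datum enters only through $\|\rho_0^*-\rho_0\|_{\dot B^{s-2}_h}$, for which $H^{s'-2}$ regularity suffices because $s-2<s'-2$.
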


Combining Proposition \ref{prop:embed} and Theorem \ref{thm:NumRelax}, we obtain the strong convergence in $\dot{h}_h^{s}$ and $\ell^\infty_h$, uniformly in $h$ and $T$.
 \begin{corollary}\label{cor:lInftyRelax}
Let all the assumptions of Theorem \ref{thm:NumRelax} be in force. The following statements hold true:
\begin{enumerate}[label={(\roman*)}]
    \item \label{cor:relax-hs} For every $T>0$, the solutions of \eqref{mainR:inEuler} converge strongly, as $\varepsilon\to0$, to the solutions of \eqref{mainR:heat}, in $L^\infty_T(\dot{h}_h^{s-2})$ and $L^1_T(\dot{h}_h^{s})$ at the rate $\mathcal{O}(\varepsilon^2)$, uniformly in $h$.  Moreover, the quantity $\ID_h\rho^\varepsilon+u^\varepsilon$ associated to the Darcy law converges to $0$ in $L^1_T(\dot{h}^{s-1})$ at the rate $\mathcal{O}(\eps^2)$, uniformly in $h$.
\item \label{cor:relax-linfty} For every $T>0$ and for $s'>5/2$, the solution $\rho^\eps$ of \eqref{mainR:inEuler} converge strongly, as $\varepsilon\to0$, to the solution  $\rho$ of \eqref{mainR:heat} in $L^\infty_T(\ell^\infty_h)$ and $L_T^1(l_h^\infty)$ at the rate $\mathcal{O}(\varepsilon^2)$, uniformly in $h$. Furthermore, the quantity $\ID_h\rho^\varepsilon+u^\varepsilon$ converges to $0$ in $L^1_T(\ell^\infty_h)$ at the rate $\mathcal{O}(\eps^2)$, uniformly in $h$.
\end{enumerate}
\end{corollary}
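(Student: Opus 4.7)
The proof plan is a direct combination of Proposition~\ref{prop:embed} with the Besov estimate \eqref{StrongRelax} of Theorem~\ref{thm:NumRelax}; essentially no new analysis of the equations is needed, just a careful choice of the regularity parameter $s$ and a term-by-term application of the embeddings.

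For part~\ref{cor:relax-hs}, I would apply the first inequality of Proposition~\ref{prop:embed}, namely $\|v\|_{\dot h_h^\sigma}\le C_\sigma\|v\|_{\dot B_h^\sigma}$, to each of the three contributions in \eqref{StrongRelax}, with $\sigma=s-2$, $\sigma=s$, and $\sigma=s-1$ respectively. Since $C_\sigma$ is $h$-independent, every Besov bound transfers unchanged to the $\dot h$-scale; taking the supremum over $T$ for the first term and keeping the $L^1$-in-time norm for the other two produces precisely the claimed $L^\infty_T(\dot h_h^{s-2})$, $L^1_T(\dot h_h^s)$ and $L^1_T(\dot h_h^{s-1})$ estimates at rate $\mathcal O(\varepsilon^2)$, uniformly in $h$.

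For part~\ref{cor:relax-linfty}, the role of the hypothesis $s'>5/2$ is precisely to permit the specific choice $s=5/2$ in Theorem~\ref{thm:NumRelax}, so that the time-$T$ Besov bound \eqref{StrongRelax} sits exactly at the critical index $s-2=1/2$ of the embedding $\dot B_h^{1/2}\hookrightarrow \ell^\infty_h$ provided by Proposition~\ref{prop:embed}. Applying this embedding to $\|(\rho^\varepsilon-\rho)(T)\|_{\dot B_h^{1/2}}\le C\varepsilon^2$, valid uniformly in $T$, gives the $L^\infty_T(\ell^\infty_h)$-estimate for $\rho^\varepsilon-\rho$; the corresponding $L^1_T(\ell^\infty_h)$-estimate then follows at once from the elementary inequality $\|f\|_{L^1_T(\ell^\infty_h)}\le T\|f\|_{L^\infty_T(\ell^\infty_h)}$ on the bounded time interval $[0,T]$.

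The slightly more delicate point concerns the $L^1_T(\ell^\infty_h)$-control of the Darcy-law quantity $\mathbb{D}_h\rho^\varepsilon+u^\varepsilon$: with $s=5/2$, Theorem~\ref{thm:NumRelax} only places this term in $L^1_T(\dot B_h^{3/2})$, which is above the critical embedding threshold. The hard part will therefore be to bring this regularity down to $1/2$. My plan is to exploit the identity $\mathbb{D}_h\rho^\varepsilon+u^\varepsilon=-\varepsilon^2\partial_t u^\varepsilon$, which is a direct consequence of the second equation of \eqref{mainR:inEuler}, and combine it with the uniform-in-$h$ a~priori estimates already underlying Theorem~\ref{thm:NumRelax} to reach the critical Besov index, after which the embedding $\dot B_h^{1/2}\hookrightarrow \ell^\infty_h$ closes the argument. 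Throughout, the $\mathcal O(\varepsilon^2)$ rate and the $h$-uniformity are preserved, which completes the proof.
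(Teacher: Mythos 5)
Parts (i) and the $L^\infty_T(\ell^\infty_h)$ statement of (ii) are argued exactly as the paper intends (the paper gives no separate proof beyond ``combine Proposition \ref{prop:embed} with Theorem \ref{thm:NumRelax}''): you apply the embedding $\dot{B}^{\sigma}_{h}\hookrightarrow \dot{h}^{\sigma}_h$ to the three terms of \eqref{StrongRelax}, and for (ii) you pick $s=5/2$ (admissible since $s'>5/2$) so that $s-2$ hits the critical index $1/2$ of \eqref{eq:l-infty-embed}. A minor caveat: your step $\|f\|_{L^1_T(\ell^\infty_h)}\le T\,\|f\|_{L^\infty_T(\ell^\infty_h)}$ makes the constant grow with $T$, whereas the constant in Theorem \ref{thm:NumRelax} (and the sentence preceding the corollary) is claimed uniform in $T$ as well; since the corollary fixes $T$ this is tolerable, but it is a loss relative to the paper's claim.

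The genuine gap is the $L^1_T(\ell^\infty_h)$ bound for the Darcy quantity $w^\eps=\ID_h\rho^\eps+u^\eps$, and your proposed route is circular. The identity $w^\eps=-\eps^2\partial_t u^\eps$ is merely a rewriting of the second equation of \eqref{mainR:inEuler}: a uniform bound on $\partial_t u^\eps$ in $L^1_T(\ell^\infty_h)$ is literally the same statement as the bound on $\eps^{-2}w^\eps$ you are trying to prove, and Theorem \ref{thm:NumRelax} provides no independent control of $\partial_t u^\eps$. Moreover, the theorem only gives $w^\eps\in L^1_T(\dot{B}^{s-1}_{h})$ with $s-1>1$, and in this homogeneous Besov scale an index larger than $1/2$ does not control the index $1/2$ required by \eqref{eq:l-infty-embed}, because the weights $2^{j/2}$ dominate $2^{j(s-1)}$ as $j\to-\infty$. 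The missing step is to re-use the interior estimates of the proof of Theorem \ref{thm:NumRelax}: the low- and high-frequency bounds \eqref{LFenergCCL} and \eqref{boundhfw} hold for an arbitrary regularity index, so applying them with the index lowered from $s$ to $3/2$ yields
\begin{equation*}
\|w^\eps\|_{L^1_T(\dot{B}^{1/2}_{h})}\;\lesssim\;\eps^2\Bigl(\|\rho_0^*\|^L_{\dot{B}^{3/2}_{h}}+\|u_0^*\|^L_{\dot{B}^{1/2}_{h}}+\|(\rho_0^*,\eps u_0^*)\|^H_{\dot{B}^{3/2}_{h}}\Bigr),
\end{equation*}
and the hypothesis $s'>5/2$ guarantees, via Theorem \ref{thm:unif_Besov_estimate}, that these data norms are finite uniformly in $h$; then \eqref{eq:l-infty-embed} gives the claimed $L^1_T(\ell^\infty_h)$ rate $\mathcal{O}(\eps^2)$ with a constant independent of $h$ and $T$. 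Without this index-lowering argument (or an equivalent substitute), your part (ii) is incomplete for the Darcy term.
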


\begin{remark}
   The convergence rate we obtain is one order higher than the one obtained in \cite{CBD3} for the nonlinear compressible Euler system with damping. This is at the cost of stronger regularity requirements for the initial data. The result we obtain is sharp, since it is consistent with the rate observed in the numerical simulations in Section \ref{sec:relax-experiments}.
\end{remark}
  
\begin{remark}
In Section \ref{sec:ext}, we broaden the scope of Theorem \ref{thm:NumRelax} to encompass general hyperbolic systems satisfying the Kalman rank condition. The analysis for this extension closely follows the methodology outlined in \cite{DanchinEMS}, assuming supplementary conditions on the matrices $A$ and $B$.
\end{remark}
\noindent\textbf{Comments and comparison with the literature.}
In our approach, a distinctive advantage of our discrete Littlewood-Paley decomposition, in contrast to existing literature (such as the work \cite{HongYang2019} on Strichartz estimates for discrete Schrödinger and Klein-Gordon equations), lies in the adaptation of the localization annuli to the precise form of the discrete differential operator $\ID_h$. This adaptation allows us to justify Bernstein-type estimates necessary for proving the relaxation property.

The justification of our results differs from previous endeavours related to similar hyperbolic approximation procedures, e.g. \cite{BoscarinoRusso09,ShiJinReview,HuShu2023uniform,BoscarinoRusso24} where implicit–explicit (IMEX) Runge–Kutta schemes are used. Here, we establish the asymptotic-preserving property of the central finite difference scheme within a refined frequency-based functional framework, strategically constructed to approach stiff relaxation procedures for hyperbolic systems. In a related context, we highlight \cite{Degond13,DimarcoPareschi14,Jin22, LemouMieussens, Jin10, JinPareschiToscani00, Bessemoulin-ChatardHerdaRey20, GoudonJinLiuYan13,BlausteinFilbet24} where authors delved into the diffusive limit of kinetics and hyperbolic models. In particular, in the recent work \cite{BlausteinFilbet24},
the authors
craft a discrete framework for studying the Vlasov-Poisson-Fokker-Planck system, first rewriting the equations as a partially dissipative hyperbolic system with stiff relaxation terms, using Hermite polynomials in terms of the velocity. Then, in line with the continuous approach by
\cite{DMS15}, they justify the relaxation limit of such hyperbolic systems (which shares similarities with the one studied in the present paper), revealing the diffusion limit at the discrete level of the kinetic model.
A crucial difference in our current scenario is that we tackle the full-space case, as opposed to the torus setting. In the full-space case, there is a lack of a spectral gap in low frequencies due to the absence of a Poincaré-type inequality, thus leading to a dichotomous behaviour in low and high frequencies which, in turn, requires the development of a functional framework tailored to deal with this polarity.

\section{Presentation of the discrete framework}
\label{sec:discrete_framework}

\subsection{Discrete Fourier transform}
Within this section, we introduce the discrete one-dimensional Fourier transform and revisit some fundamental properties such as invertibility and Parseval's equality. Subsequently, we use these properties to study the solutions of discrete hyperbolic systems.

The following definition is essentially taken from \cite[Section 2.2]{Trefethen1994}.
\begin{definition}[Discrete Fourier transform]
\label{def:discreteFourier}
We consider a bilateral infinite real sequence $(v_n)_{n\in \Z}$ and a grid width $h>0$. Assume that $v\in l^2_h$, that is:
\begin{equation}
\label{eq:defl2h}
    \|v\|_{l^2_h}^2:=h \sum_{n\in \Z} v_n^2<\infty.
\end{equation}
The discrete Fourier transform of $v$ is defined as $\hat{v}:\left[-\frac{\pi}{h},\frac{\pi}{h}\right]\to \IR$,
\[\hat{v}(\xi):=\frac{h}{\sqrt{2\pi}}\sum_{n\in \Z} e^{-i\xi nh} v_n.\]
The inverse Fourier transform $\IF^{-1}:L^2\left(\left[-\frac{\pi}{h},\frac{\pi}{h}\right]\right)\to l^2_h$ has the following form:
\begin{equation}
\label{eq:inverseDiscreteFourier}
\left(\IF^{-1}(g)\right)_n=\frac{1}{\sqrt{2\pi}} \int_{-\frac{\pi}{h}}^{\frac{\pi}{h}} e^{i\xi nh} g(\xi) \dxi.
\end{equation}
\end{definition}

The next proposition, taken from \cite[Theorem 2.5]{Trefethen1994}, summarizes some basic properties of the discrete Fourier transform.
\begin{proposition}
    Let $v\in l^2_h$. The following properties hold:
    \begin{enumerate}
        \item $\hat{v}\in L^2\left(\left[-\frac{\pi}{h},\frac{\pi}{h}\right]\right)$ and
        $\|\hat{v}\|_{L^2\left(\left[-\frac{\pi}{h},\frac{\pi}{h}\right]\right)}=\|v\|_{l^2_h}.$ (Parseval's equality)
        \item The sequence $u$ can be recovered from its discrete Fourier transform by the equality: \[v=\IF^{-1}(\hat{v}).\]
        \item Let $w\in l^1_h$ (defined as \eqref{eq:defl2h}). Then, the convolution product of $u$ and $v$ defined as:
        \[(v*w)_n:=h\sum_{m\in \Z} v_m w_{n-m}\]
        belongs to $l^2_h$ and
        \[\widehat{v*w}=\hat{v}\hat{w}.\]
    \end{enumerate}
    
\end{proposition}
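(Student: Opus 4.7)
The plan is to reduce all three statements to a single cornerstone identity: the orthogonality relation on $[-\pi/h,\pi/h]$,
\begin{equation*}
\int_{-\pi/h}^{\pi/h} e^{i\xi(n-m)h}\,d\xi = \frac{2\pi}{h}\,\delta_{nm},\qquad n,m\in\Z,
\end{equation*}
which is a direct computation (for $n=m$ the integrand is $1$; for $n\neq m$, a primitive gives $\tfrac{2\sin(\pi(n-m))}{(n-m)h}=0$). Everything else is bookkeeping around this identity. I would first carry out the proofs for finitely supported sequences, where all sums are finite and no convergence issue arises, then extend to the general $l^2_h$ (resp.\ $l^1_h$) setting by density.

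For Parseval's equality (1), I would expand
\begin{equation*}
\|\hat v\|_{L^2([-\pi/h,\pi/h])}^2 = \frac{h^2}{2\pi}\sum_{n,m} v_n\overline{v_m}\int_{-\pi/h}^{\pi/h} e^{-i\xi(n-m)h}\,d\xi,
\end{equation*}
apply the orthogonality relation to collapse the sum to the diagonal $n=m$, and obtain $h\sum_n |v_n|^2 = \|v\|_{l^2_h}^2$. For general $v\in l^2_h$ one invokes density of finitely supported sequences together with the isometric bound just established to pass to the limit; this also proves $\hat v \in L^2([-\pi/h,\pi/h])$. For the inversion formula (2), I would plug the definition of $\hat v$ into $\IF^{-1}(\hat v)$, interchange the (finite) sum and the integral, and apply the same orthogonality identity, obtaining directly $(\IF^{-1}(\hat v))_n = v_n$. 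Extension to $l^2_h$ again follows from density and continuity of both $\IF$ and $\IF^{-1}$.

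For the convolution property (3), the fact that $v*w\in l^2_h$ is a discrete version of Young's inequality: using Cauchy--Schwarz on each $(v*w)_n = h\sum_m v_m w_{n-m}$ with the weight $|w_{n-m}|^{1/2}\cdot |w_{n-m}|^{1/2}|v_m|$ and then summing in $n$ yields $\|v*w\|_{l^2_h}\le \|v\|_{l^2_h}\|w\|_{l^1_h}$. For the multiplicative formula, writing
\begin{equation*}
\widehat{v*w}(\xi)=\frac{h^2}{\sqrt{2\pi}}\sum_{n,m} e^{-i\xi nh} v_m w_{n-m}
\end{equation*}
and performing the change of index $k=n-m$ factorises the double series into the product of the two single Fourier sums of $v$ and $w$. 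The exchange of summation order is justified by the absolute convergence provided by the $l^2_h\times l^1_h$ hypothesis (Fubini).

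The only step requiring genuine care is the passage from finitely supported data to arbitrary $l^2_h$ (and $l^1_h$) sequences, i.e.\ controlling the tails of the Fourier series. This is handled uniformly by the isometry property from (1), which propagates the finite-support identities to the general case by continuity. Aside from this, the three items are direct consequences of orthogonality and rearrangement of absolutely convergent series.
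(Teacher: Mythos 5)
The paper does not prove this proposition: it is quoted verbatim from \cite[Theorem 2.5]{Trefethen1994}, so there is no internal proof to compare against. Your argument is the standard one for exactly this result — orthogonality of $\xi\mapsto e^{i\xi nh}$ on $[-\pi/h,\pi/h]$, verification on finitely supported sequences, and extension by density using the isometry — and it is essentially correct. Two small points deserve attention. First, in item (3) your appeal to Fubini via ``absolute convergence provided by the $l^2_h\times l^1_h$ hypothesis'' is imprecise: for $v\in l^2_h\setminus l^1_h$ the double series $\sum_{n,m}|v_m||w_{n-m}|$ need not converge, so the interchange is not justified directly; however, the finite-support-plus-density scheme you set up at the outset does cover this (both $v\mapsto v*w$, by the Young bound you prove, and $v\mapsto\hat v\hat w$, since $\hat w\in L^\infty$, are continuous on $l^2_h$), so you should route the general case through that rather than through Fubini. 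Second, with the paper's symmetric normalization $\hat v(\xi)=\frac{h}{\sqrt{2\pi}}\sum_n e^{-i\xi nh}v_n$ and the convolution $(v*w)_n=h\sum_m v_mw_{n-m}$, your change of index $k=n-m$ actually yields $\widehat{v*w}=\sqrt{2\pi}\,\hat v\hat w$, not $\hat v\hat w$; this constant is an artifact of the normalization chosen in the paper (the identity as stated holds for Trefethen's unnormalized transform), and is worth flagging explicitly rather than silently absorbing.
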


\subsection{Standard Finite difference schemes}
\label{sec:FD}

In this section, we recall the three possible variants of two-point finite difference schemes used to approximate the differentiation operator $\partial_x$. Namely, we consider the upwind, downwind and central first-order finite difference operators on a uniform grid of width $h$ in one dimension for a bilateral sequence $(v_n)_{n\in \Z}$:
\begin{align}
\label{eq:FDupSeq}
(\ID_h^+ v)_n&=\frac{v_{n+1}-v_n}{h},\\
\label{eq:FDdownSeq}
(\ID_h^- v)_n&=\frac{v_n-v_{n-1}}{h},\\
\label{eq:FDcentralSeq}
(\ID_h v)_n&=\frac{v_{n+1}-v_{n-1}}{2h}.
\end{align}
Therefore, the three possible semidiscrete counterparts of the equation \eqref{SystGen} that correspond to the operators \eqref{eq:FDupSeq}-\eqref{eq:FDcentralSeq} are:
\begin{align}
\label{eq:problemFDup}
\partial_t U^+(t)+ A(\ID_h^+ U^+(t))&=-B U^+(t),\\
\label{eq:problemFDdown}
\partial_t U ^-(t)+ A(\ID_h^- U^-(t))&=-B U ^-(t),\\
\label{eq:problemFDcentral}
\partial_t U (t)+ A(\ID_h U(t))&=-B U (t),
\end{align}
where $U: (0,\infty) \times \Z \to \IR$ is a time-dependent bilateral sequence of vectors.

All three discrete operators \eqref{eq:FDupSeq}-\eqref{eq:FDcentralSeq} are consistent i.e. they approximate the operator $\partial_x$ as $h$ approaches zero. However, we need a structure-preserving scheme, so we study the stability of \eqref{eq:problemFDup}-\eqref{eq:problemFDcentral} to choose the appropriate one.
\medbreak

\subsection{The Fourier transform of finite difference schemes and stability results}
\label{sec:FourierStability}
Within this section, we use the discrete Fourier transform to establish stability results for the finite difference schemes \eqref{eq:problemFDup}-\eqref{eq:problemFDcentral}. This analysis will guide us in selecting the most suitable scheme for the discretization of hyperbolic systems.

The subsequent lemma, pertaining to the discrete Fourier transform of the discrete operators \eqref{eq:FDupSeq}-\eqref{eq:FDcentralSeq}, can be demonstrated through direct computation:

\begin{lemma}
Let $v\in l^2_h$. The Fourier transform of the finite difference operators \eqref{eq:FDupSeq}-\eqref{eq:FDcentralSeq} are given by:
\begin{align}
\label{eq:FDupFourier}
\widehat{(\ID_h^+ v)}(\xi)&=\frac{e^{i\xi h}-1}{h}\hat{u}(\xi),\\
\label{eq:FDdownFourier}
\widehat{(\ID_h^- v)}(\xi)&=\frac{1-e^{-i\xi h}}{h}\hat{u}(\xi),\\
\label{eq:FDcentralFourier}
\widehat{(\ID_h v)}(\xi)&=\frac{e^{i\xi h}-e^{-i\xi h}}{2h}\hat{u}(\xi).
\end{align}

As an immediate consequence, the solutions of the Cauchy problems associated with \eqref{eq:problemFDup}-\eqref{eq:problemFDcentral} are, in Fourier variables:
\begin{align}
\label{eq:problemFDupSolutionFourier}
\hat{u}^+(t,\xi)&=\exp\left[\left(-A\frac{e^{i\xi h}-1}{h}-B\right)t\right] \hat{u}_0(\xi),\\
\label{eq:problemFDdownSolutionFourier}
\hat{u}^-(t,\xi)&=\exp\left[\left(-A\frac{1-e^{-i\xi h}}{h}-B\right)t\right] \hat{u}_0(\xi),\\
\label{eq:problemFDcentralSolutionFourier}
\hat{u}(t,\xi)&=\exp\left[\left(-A\frac{e^{i\xi h}-e^{-i\xi h}}{2h}-B\right)t\right] \hat{u}_0(\xi).
\end{align}
\end{lemma}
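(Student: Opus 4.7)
The plan is to derive both the Fourier multiplier identities and the solution formulas by direct computation from Definition~\ref{def:discreteFourier}. For the upwind operator, I would insert the definition of $\ID_h^+$ into the discrete Fourier transform and split the resulting series:
\[
\widehat{(\ID_h^+ v)}(\xi)
= \frac{1}{\sqrt{2\pi}}\sum_{n\in\Z} e^{-i\xi nh} v_{n+1}
 -\frac{1}{\sqrt{2\pi}}\sum_{n\in\Z} e^{-i\xi nh} v_n .
\]
Reindexing the first sum by $m=n+1$ produces a prefactor $e^{i\xi h}$, and recombining the two sums yields $\widehat{(\ID_h^+ v)}(\xi) = \frac{e^{i\xi h}-1}{h}\hat{v}(\xi)$. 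The downwind identity follows from the symmetric reindexing $m=n-1$, and the central formula can be obtained either by the same direct calculation or, more efficiently, from the algebraic identity $\ID_h = \tfrac12(\ID_h^+ + \ID_h^-)$, which gives $\frac{e^{i\xi h}-e^{-i\xi h}}{2h}$ as the average of the two one-sided multipliers.

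For the Cauchy problem formulas \eqref{eq:problemFDupSolutionFourier}--\eqref{eq:problemFDcentralSolutionFourier}, I would apply the discrete Fourier transform in the spatial index to each of the semi-discrete equations \eqref{eq:problemFDup}--\eqref{eq:problemFDcentral}. Since $A$ and $B$ are constant $N\times N$ matrices, the spatial action is diagonalised in frequency into multiplication by the appropriate $\xi$-dependent matrix; for the central scheme this gives
\[
\partial_t \hat{U}(t,\xi) = -\Bigl(A\,\tfrac{e^{i\xi h}-e^{-i\xi h}}{2h} + B\Bigr)\hat{U}(t,\xi), \qquad \hat{U}(0,\xi)=\hat{U}_0(\xi),
\]
which is a linear matrix ODE in $t$ with parameter $\xi$. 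Its solution is the matrix exponential claimed in \eqref{eq:problemFDcentralSolutionFourier}, and the two one-sided analogues are identical up to the replacement of the multiplier.

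The argument is routine; the only point that deserves a comment is the legitimacy of the reindexing inside the infinite sums. This is ensured by Parseval's equality: for $v\in \ell^2_h$ the partial sums defining $\hat{v}$ converge in $L^2(-\pi/h,\pi/h)$, and the translation $(v_n)\mapsto (v_{n\pm 1})$ is an isometry on $\ell^2_h$ whose Fourier-side realization is precisely multiplication by $e^{\pm i\xi h}$. No genuine obstacle arises.
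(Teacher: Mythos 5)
Your proposal is correct and is exactly the ``direct computation'' the paper invokes without writing out: the paper gives no proof of this lemma beyond asserting it follows by direct calculation, and your reindexing argument (with the $h$ from the Fourier transform cancelling the $1/h$ in the difference quotient), the identity $\ID_h=\tfrac12(\ID_h^++\ID_h^-)$, and the reduction of the semi-discrete systems to a linear matrix ODE in $\xi$ solved by the matrix exponential are precisely the intended steps. The remark on justifying the reindexing via the $\ell^2_h$-isometry of translation is a correct (if routine) addition.
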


The following proposition accounts for the stability of the discrete one-dimensional hyperbolic problem in the three analysed cases. Note that a numerical scheme is called stable if there exists a constant $C_T$ depending only on $T>0$ (so it is independent of the grid width $h$), such that, for every $t\in (0,T)$ and every $u_0\in l^2_h$,
    \[\|u(t)\|_{l^2_h}\leq C_T \|u_0\|_{l^2_h}.\]
\begin{proposition}
\label{stabilityForSchemes}
    Let $A$ be a symmetric real matrix. For initial data $u_0\in l^2_h$, the problem \eqref{eq:problemFDup} is stable if the matrix $A$ is negative, the problem \eqref{eq:problemFDdown} is stable if $A$ is positive and the problem \eqref{eq:problemFDcentral} is unconditionally stable.
    
    We recall that the real matrix $A$ is called positive if, for every vector $v\in \IR^N$, the scalar product $(Av,v)_{\IR^N}=\sum_{i,j=1}^N A_{ij}v_j v_i$ is non-negative.
\end{proposition}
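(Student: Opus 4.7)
The plan is to pass to Fourier variables via the formulas \eqref{eq:problemFDupSolutionFourier}--\eqref{eq:problemFDcentralSolutionFourier}, apply Parseval's equality, and reduce the problem to a pointwise-in-$\xi$ bound on a matrix exponential of the form $e^{M(\xi)t}$. The central observation is the following elementary fact: if $M(\xi)\in \mathbb{C}^{N\times N}$ satisfies $M(\xi)+M(\xi)^{*}\le 0$ uniformly in $\xi$, then for every $v\in\mathbb{C}^N$
\[
\frac{d}{dt}\|e^{M(\xi)t}v\|^{2}=\bigl\langle (M(\xi)+M(\xi)^{*})\,e^{M(\xi)t}v,\,e^{M(\xi)t}v\bigr\rangle\le 0,
\]
so $\|e^{M(\xi)t}\|_{\mathbb{C}^{N}\to\mathbb{C}^{N}}\le 1$ for all $t\ge 0$. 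Combined with Parseval's identity applied to \eqref{eq:problemFDupSolutionFourier}--\eqref{eq:problemFDcentralSolutionFourier}, this yields $\|u(t)\|_{l^{2}_{h}}\le\|u_{0}\|_{l^{2}_{h}}$, which is stability with constant $C_{T}=1$. The task thus reduces to verifying the sign condition on the Hermitian part of each symbol.

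\textbf{Central scheme.} The symbol reads $M(\xi):=-A\,\frac{e^{i\xi h}-e^{-i\xi h}}{2h}-B=-iA\,\frac{\sin(\xi h)}{h}-B$. Since $A$ is real symmetric and $\sin(\xi h)/h\in\mathbb{R}$, the first term is skew-Hermitian, while $-B$ is Hermitian and non-positive by \eqref{BD}. Hence
\[
M(\xi)+M(\xi)^{*}=-2B\le 0\qquad\text{for every }\xi\in\bigl[-\tfrac{\pi}{h},\tfrac{\pi}{h}\bigr],
\]
with no restriction on $A$. This is precisely the claimed \emph{unconditional} stability.

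\textbf{Upwind and downwind.} For the upwind symbol $M^{+}(\xi):=-A\,\frac{e^{i\xi h}-1}{h}-B$, a direct computation gives
\[
M^{+}(\xi)+M^{+}(\xi)^{*}=-A\,\frac{2\cos(\xi h)-2}{h}-2B=\frac{2(1-\cos(\xi h))}{h}\,A-2B.
\]
The scalar factor $2(1-\cos(\xi h))/h$ is non-negative, so whenever $A$ is negative semi-definite the first summand is $\le 0$; combined with $-2B\le 0$, we get $M^{+}+(M^{+})^{*}\le 0$ and stability follows as in the general argument. The downwind symbol $M^{-}(\xi):=-A\,\frac{1-e^{-i\xi h}}{h}-B$ is treated symmetrically:
\[
M^{-}(\xi)+M^{-}(\xi)^{*}=-\frac{2(1-\cos(\xi h))}{h}\,A-2B,
\]
which is $\le 0$ provided $A$ is positive semi-definite.

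\textbf{Main obstacle.} There is no genuine obstacle here: the only subtle point is the uniformity in $\xi$ of the dissipation estimate $\|e^{M(\xi)t}\|\le 1$, which is automatic from the sign condition on the Hermitian part. I would, however, emphasize in the write-up that the argument yields the stronger statement $\|u(t)\|_{l^{2}_{h}}\le\|u_{0}\|_{l^{2}_{h}}$ (i.e.\ one may take $C_{T}=1$), and that for the central scheme, the fact that $A$ contributes only through a skew-Hermitian term is precisely the feature that makes the scheme work for \emph{any} symmetric $A$ and underlies its use throughout the rest of the paper.
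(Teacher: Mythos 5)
Your proof is correct and follows essentially the same route as the paper: the discrete Fourier transform plus Parseval reduces stability to a uniform-in-$\xi$ bound on $\|e^{M(\xi)t}\|_{\C^N\to\C^N}$, which both you and the paper obtain by differentiating the squared Euclidean norm of $e^{M(\xi)t}v$ in time. If anything, your write-up is slightly tighter: by checking the sign of the Hermitian part of the \emph{full} symbol $M(\xi)$ you sidestep the paper's implicit splitting of the exponent into a skew-Hermitian piece and a Hermitian piece (which need not commute), and you make explicit that one may take $C_T=1$.
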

\begin{proof} 
    We expand the terms $e^{i\xi h}$ using Euler's relation and use Parseval's equality in \eqref{eq:problemFDupSolutionFourier}-\eqref{eq:problemFDcentralSolutionFourier}:
\begin{align}
\label{eq:problemFDupSolutionL2}
\|u^+(t)\|_{l^2_h}&\leq \max_{\xi\in [-\pi/h,\pi/h]}\left\|\exp\left[\left(\frac{A}{h}(1-\cos(\xi h))-i\frac{A}{h}\sin(\xi h)-B\right)t\right]\right\|_{\C^n\to\C^n} \|u_0\|_{l^2_h},\\
\label{eq:problemFDdownSolutionL2}
\|u^-(t)\|_{l^2_h}&\leq \max_{\xi\in [-\pi/h,\pi/h]}\left\|\exp\left[\left(-\frac{A}{h}(1-\cos(\xi h))-i\frac{A}{h}\sin(\xi h)-B\right)t\right]\right\|_{\C^n\to\C^n} \|u_0\|_{l^2_h},\\
\label{eq:problemFDcentralSolutionL2}
\|u(t)\|_{l^2_h}&\leq \max_{\xi\in [-\pi/h,\pi/h]}\left\|\exp\left[\left(-i\frac{A}{h}\sin(\xi h)-B\right)t\right]\right\|_{\C^n\to\C^n} \|u_0\|_{l^2_h},
\end{align}
where $\|\cdot\|_{\C^N\to \C^N}$ is the matrix norm associated to the Euclidean norm on $\C^N$, which, in turn, corresponds to the scalar product:
\[(u,v)_{\C^N}=(u,\bar{v})_{\IR^N}=\sum_{i=1}^n u_i \bar{v_i}.\]
Since for every $\xi\in \left[-\frac{\pi}{h},\frac{\pi}{h}\right]$ and $h>0$,  $-\frac{\sin(\xi h)}{h}$ is a real number and $(1-\cos(\xi h))\geq 0$, the conclusion follows if we prove the two following claims:
\begin{enumerate}[label={\roman*)}]
    \item For any symmetric real matrix $A$, the matrix norm $\left\|e^{i A}\right\|_{\mathbb{C}^N\to \mathbb{C}^N}$ is equal to one.
    \item If the real matrix $A$ is symmetric negative, then the matrix norm $\left\|e^A\right\|_{\mathbb{C}^N\to \mathbb{C}^N}$ is at most one.
\end{enumerate}

For proving the first claim, we fix a vector $v_0\in \C^N$ and consider the time-dependent vector $v:[0,\infty)\to \C^N$ that satisfies the following Cauchy problem:
\begin{equation}
    \label{eq:cauchyexpiA}
    \begin{cases}
        \partial_t v(t)=iA v(t),\\
        v(0)=v_0.
    \end{cases}
\end{equation}
Taking the time derivative of the Euclidean norm $\|v(t)\|^2_{\C^N}$ and using the fact that $A$ is real and symmetric, we obtain that this norm is conserved in time. Claim i) follows since $v(1)=e^{iA} v_0$. The proof of Claim ii) follows a similar logic.
 \end{proof}
In the rest of the paper, we will focus on the central finite difference scheme, since its stability does not depend on the parameters of the system \eqref{SystGen}.
\subsection{Properties of the central finite difference operator}
We state a well-known integration by parts formula for the operator $\ID_h$ defined in \eqref{eq:FDcentralSeq}, which will be useful in our computations.
\begin{proposition}\label{Prop:IBP}
Let $u,v\in l_h^2$. The following integration by parts formula holds:
\[\langle u,\ID_h v \rangle_{l^2_h}=-\langle \ID_h u, v \rangle_{l^2_h},\]
where the $l^2_h$ scalar product associated to the norm \eqref{eq:defl2h} is given by:
\[\langle u, v \rangle_{l^2_h}=h\sum_{n\in \Z} u_n v_n.\]
An immediate consequence of the integration by parts formula is that, for every $u\in l_h^2$,
\[\langle u,\ID_h u \rangle_{l^2_h}=0.\]
\end{proposition}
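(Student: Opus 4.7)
The plan is to unfold both sides using the definition of $\ID_h$ from \eqref{eq:FDcentralSeq} and the definition of the $\ell^2_h$ scalar product, and then perform a discrete reindexing -- the exact analogue of the continuous integration by parts. First I would write
\[
\langle u, \ID_h v\rangle_{\ell^2_h} = h \sum_{n\in\Z} u_n\, \frac{v_{n+1}-v_{n-1}}{2h} = \tfrac{1}{2}\sum_{n\in\Z} u_n v_{n+1} - \tfrac{1}{2}\sum_{n\in\Z} u_n v_{n-1}.
\]
Before splitting the sum, I would note that each series is absolutely convergent: Cauchy--Schwarz gives $\sum_n |u_n v_{n\pm 1}| \leq \bigl(\sum_n u_n^2\bigr)^{1/2}\bigl(\sum_n v_{n\pm 1}^2\bigr)^{1/2} = h^{-1}\|u\|_{\ell^2_h}\|v\|_{\ell^2_h} < \infty$, which justifies rearrangement.

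Next, in the first sum I would change variable $m=n+1$ and in the second $m=n-1$, which yields
\[
\langle u, \ID_h v\rangle_{\ell^2_h} = \tfrac{1}{2}\sum_{m\in\Z} u_{m-1} v_m - \tfrac{1}{2}\sum_{m\in\Z} u_{m+1} v_m = -h\sum_{m\in\Z} \frac{u_{m+1}-u_{m-1}}{2h}\, v_m = -\langle \ID_h u, v\rangle_{\ell^2_h},
\]
which is the desired identity. The only subtlety here is justifying the index shift, which is precisely why the absolute convergence step above is not purely cosmetic; there are no boundary terms to worry about as in the continuous setting, since the sum runs over all of $\Z$ and the tails vanish.

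For the consequence, I would simply specialise the identity to $u=v$: since the scalar product $\langle\cdot,\cdot\rangle_{\ell^2_h}$ is symmetric on real sequences, $\langle u,\ID_h u\rangle_{\ell^2_h} = -\langle \ID_h u, u\rangle_{\ell^2_h} = -\langle u, \ID_h u\rangle_{\ell^2_h}$, forcing the quantity to vanish. I do not foresee any real obstacle in this proof; the only point deserving a line of justification is the absolute convergence that allows the reindexing, everything else is a direct computation.
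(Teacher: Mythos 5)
Your proof is correct and is precisely the standard index-shift argument that the paper implicitly relies on (the paper states this as a well-known fact without proof, referring to Strikwerda). The Cauchy--Schwarz justification of the rearrangement and the symmetry argument for the corollary are both sound; nothing is missing.
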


For a more in-depth exploration of finite difference schemes and their properties, interested readers can consult \cite{Strikwerda2004}.

\subsection{Discrete Besov spaces}
\label{sec:discrete-besov}
In this section, we establish an analogous framework for the standard Besov spaces (associated with the continuous Fourier transform) within the discrete setting introduced in the preceding sections. The definition of these discrete Besov spaces is guided by our objective of localizing the frequencies of a bilateral sequence $(v_n)_{n\in \Z}$ in such a manner that, for each $j\in \Z$, the localization $\delta_h^j v$ is designed to satisfy a Bernstein-type estimate:
\begin{equation}\label{eq:intuitiveBernstein}
    \|\ID_h (\delta_h^j v)\|_{l^2_h} \sim 2^j \|\delta_h^j v\|_{l^2_h},
\end{equation}
where $\ID_h$ is the central finite difference operator defined in \eqref{eq:FDcentralSeq}. We will formulate the rigorous form of the Bernstein estimate in Section \ref{sec:discreteBesovProperties}. Also, the interested reader could refer to \cite[Chapter 2]{HJR} for an introduction to continuous Besov spaces and their basic properties.

\subsubsection{The construction of discrete Besov spaces}
The form of the discrete central finite difference operator $\ID_h$ in Fourier variables
\begin{equation}
\label{eq:FDCentralFourierSimple}
    \widehat{(\ID_h v)}(\xi)=i\frac{\sin(\xi h)}{h} \hat{v}(\xi),
\end{equation}
suggests the following notation:
\begin{equation}
    \label{def:Fj}
    F_h(j):=\left\{\xi \in \left[-\frac{\pi}{h},\frac{\pi}{h}\right]: \left|\frac{\sin(\xi h )}{h}\right|\in \IC_j\right\},
\end{equation}
where, for every $j\in \Z$, we denote
\begin{equation}
    \label{def:Cj}
    \IC_j := \left[\frac{3}{4} 2^j, \frac{4}{3} 2^{j+1}\right].
\end{equation}

Inspired by the dyadic decomposition used to construct the continuous Besov spaces \cite[Sections 2.2 and 2.3]{HJR}, we consider a family of functions $(\varphi_j)_{j\in \Z}$ with the following properties:
\begin{align}
    \varphi_j:\left[-\frac{\pi}{h},\frac{\pi}{h}\right]\to [0,1], \quad\forall j\in \Z,\\
    \label{eq:suppVarphij}
    {\rm supp}(\varphi_j)\subseteq F_h(j), \quad \forall j \in \Z,\\
    \label{eq:sumVarphij}
    \sum_{j\in \Z} \varphi_j(\xi)=1, \quad \forall \xi \in \left[-\frac{\pi}{h},\frac{\pi}{h}\right].
\end{align}
We note that, since the family of sets $(\IC_j)_{j\in \Z}$ is locally finite, the above sum makes sense for every $\xi\in \left[-\frac{\pi}{h},\frac{\pi}{h}\right]$. Now, we can define the $j$-th frequency localization of a sequence $(v_n)_{n\in \Z}$ and the discrete homogeneous Besov spaces.

\begin{definition}[Discrete localization operators]
    \label{def:localization}
    Let $v\in l^2_h$ and $j\in \Z$. We define the  $j$-th frequency localization of $v$ as:
    \[\delta_h^j v:=\IF^{-1}(\hat{v}\varphi_j).\]
\end{definition}

\begin{definition}[Discrete Besov Spaces -- refer to {\cite[Definition 2.15]{HJR}} for the continuous case]
    Let $s\in \IR$. The discrete Besov space $\dot{B}_{h}^s$ consists of all the sequences $v\in l^2_h$ satisfying:
    \begin{equation}
    \label{eq:BesovNorm2}
    \|v\|_{\dot{B}_{h}^s}:=\sum_{j\in \Z} 2^{js} \|\delta_h^j v\|_{l_h^2}<\infty.
    \end{equation}
\end{definition}

\subsubsection{Basic properties of discrete Besov spaces}
\label{sec:discreteBesovProperties}
First, we revisit and rigorously formulate the Bernstein estimate \eqref{eq:intuitiveBernstein}:

\begin{proposition}[Bernstein estimate for central finite difference operator]
\label{rigorousBernstein}
Let $\ID_h$ be the operator defined in \eqref{eq:FDcentralSeq}. Then, there exist two universal positive constants $C,c>0$ such that, for every $h>0$, every bilateral sequence $v\in l^2_h$ and every integer $j$,
\[c\, 2^j \|\delta_h^j v\|_{l^2_h} \leq \|\ID_h \delta_h^j v\|_{l^2_h} \leq C\, 2^j \|\delta_h^j v\|_{l^2_h},\]
where $\delta_h^j$ is the localization operator introduced in Definition \ref{def:localization}.
\end{proposition}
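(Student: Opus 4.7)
The plan is to reduce everything to a pointwise estimate of the Fourier symbol on the support of the localizer $\varphi_j$, and then invoke Parseval's equality. Since the central finite difference operator acts in Fourier variables as multiplication by $i\frac{\sin(\xi h)}{h}$ (cf.\ \eqref{eq:FDCentralFourierSimple}), one has
\[
\widehat{\ID_h \delta_h^j v}(\xi) \;=\; i\,\frac{\sin(\xi h)}{h}\,\hat{v}(\xi)\,\varphi_j(\xi).
\]
By Parseval's equality applied both to $\delta_h^j v$ and to $\ID_h \delta_h^j v$,
\[
\|\ID_h \delta_h^j v\|_{l^2_h}^2 \;=\; \int_{-\pi/h}^{\pi/h} \left|\frac{\sin(\xi h)}{h}\right|^2 |\hat{v}(\xi)|^2 \varphi_j(\xi)^2 \,\dxi,
\qquad
\|\delta_h^j v\|_{l^2_h}^2 \;=\; \int_{-\pi/h}^{\pi/h} |\hat{v}(\xi)|^2 \varphi_j(\xi)^2 \,\dxi.
\]

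The key step is now a trivial pointwise bound. By the support property \eqref{eq:suppVarphij}, the integrands above are zero unless $\xi \in F_h(j)$, where, by the definition \eqref{def:Fj} and \eqref{def:Cj} of $F_h(j)$ and $\IC_j$,
\[
\tfrac{3}{4}\,2^{j} \;\leq\; \left|\frac{\sin(\xi h)}{h}\right| \;\leq\; \tfrac{4}{3}\,2^{j+1}.
\]
Inserting these pointwise bounds into the Fourier-side expression for $\|\ID_h \delta_h^j v\|_{l^2_h}^2$ and comparing with that of $\|\delta_h^j v\|_{l^2_h}^2$ yields
\[
\left(\tfrac{3}{4}\right)^{2} 2^{2j}\,\|\delta_h^j v\|_{l^2_h}^2 \;\leq\; \|\ID_h \delta_h^j v\|_{l^2_h}^2 \;\leq\; \left(\tfrac{8}{3}\right)^{2} 2^{2j}\,\|\delta_h^j v\|_{l^2_h}^2.
\]
Taking square roots, the claim follows with universal constants $c = \tfrac{3}{4}$ and $C = \tfrac{8}{3}$, independent of $h$, $v$, and $j$.

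There is essentially no obstacle here: the whole point of choosing the non-standard annuli $F_h(j)$ adapted to the symbol $\sin(\xi h)/h$ (rather than to $\xi$ itself, as in \cite{HongYang2019}) is precisely to make this Bernstein-type estimate a one-line consequence of the definitions. The only subtlety worth noting is that no lower bound on $\varphi_j$ itself is needed, since $\varphi_j$ appears symmetrically on both sides of the inequality; only its support matters. This is why the proposition holds for any admissible choice of the partition of unity $(\varphi_j)_{j\in\Z}$ satisfying \eqref{eq:suppVarphij}--\eqref{eq:sumVarphij}.
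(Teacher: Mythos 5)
Your proof is correct and follows essentially the same route as the paper's: compute the Fourier symbol of $\ID_h \delta_h^j v$, use the support property \eqref{eq:suppVarphij} to bound $\left|\frac{\sin(\xi h)}{h}\right|$ pointwise on $F_h(j)$ via \eqref{def:Fj}--\eqref{def:Cj}, and conclude by Parseval. The only difference is that you spell out the explicit constants $c=\tfrac{3}{4}$ and $C=\tfrac{8}{3}$, which the paper leaves implicit.
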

\begin{proof}
    Taking into account \eqref{eq:FDCentralFourierSimple} and Definition \ref{def:localization}, we obtain that:
    \[\widehat{(\ID_h \delta_h^j v)}(\xi) = i\frac{\sin (\xi h)}{h} \varphi_j(\xi) \hat{v}(\xi). \]
    From \eqref{def:Fj} and \eqref{eq:suppVarphij} we  get that, for every $\xi\in {\rm supp}(\varphi_j)$,
    \[\left|\frac{\sin(\xi h)}{h}\right|\in \IC_j.\]
    Then, the conclusion follows from Parseval's equality.
\end{proof}
\begin{definition}[Frequency-restricted discrete Besov Spaces] \label{def:freqbesov}
    Let $s\in \IR$ and $\kappa$  a small enough positive constant that will be precisely fixed in the proof of Theorem \ref{thm:NumRelax}. For ${J_\varepsilon}:= \log_2\dfrac{\kappa}{\varepsilon}$, i.e. $2^{J_\varepsilon}= \dfrac{\kappa}{\varepsilon}$, we define:
    \begin{equation}
    \label{eq:BesovNormLFHF}
    \|v\|^L_{\dot{B}_{h}^s}:=\sum_{j\leq J_\varepsilon} 2^{js} \|\delta_h^j v\|_{l_h^2} \quad \text{and} \quad   \|v\|^H_{\dot{B}_{h}^s}:=\sum_{j\geq J_\varepsilon} 2^{js} \|\delta_h^j v\|_{l_h^2}.
    \end{equation}
\end{definition}

From Proposition \ref{rigorousBernstein}, using that $2^{J_\varepsilon}=\dfrac{\kappa}{\varepsilon}$, we immediately deduce the following low-high frequencies Bernstein-type inequalities.
\begin{proposition} \label{prop:Bernstein}
Let $v\in \ell^2_h$ and $s'>0$. The following Bernstein-type inequalities hold:
\begin{align} \label{BernsteinLF}
&\|v\|_{\dot{B}^{s}_{h}}^{L}\lesssim \dfrac{\kappa^{s'}}{\varepsilon^{s'}}\|v\|_{\dot{B}^{s-s'}_{h}}^{L},
\\&\|v\|_{\dot{B}^{s}_{h}}^{H}\lesssim \dfrac{\varepsilon^{s'}}{\kappa^{s'}}\|v\|_{\dot{B}^{s+s'}_{h}}^{H}.
\label{BernsteinHF}
\end{align}
\end{proposition}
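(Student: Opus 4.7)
\textbf{Plan for the proof of Proposition \ref{prop:Bernstein}.} The two inequalities are purely about the sums defining the low- and high-frequency semi-norms, and can be obtained by a direct computation starting from Definition \ref{def:freqbesov}. My plan is to split the proof into the low-frequency estimate \eqref{BernsteinLF} and the high-frequency estimate \eqref{BernsteinHF}, treating them in a symmetric way by exploiting that the threshold $2^{J_\varepsilon}=\kappa/\varepsilon$ acts as a natural cutoff in the dyadic scale.

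For \eqref{BernsteinLF}, I would begin with the definition
\[
\|v\|_{\dot{B}^{s}_{h}}^{L}=\sum_{j\leq J_\varepsilon} 2^{js}\|\delta_h^j v\|_{\ell^2_h}=\sum_{j\leq J_\varepsilon} 2^{js'}\cdot 2^{j(s-s')}\|\delta_h^j v\|_{\ell^2_h}.
\]
Since $s'>0$ by hypothesis, the factor $2^{js'}$ is monotone increasing in $j$, so for every $j\leq J_\varepsilon$ it is bounded by $2^{J_\varepsilon s'}=(\kappa/\varepsilon)^{s'}$. Pulling this uniform bound out of the sum yields
\[
\|v\|_{\dot{B}^{s}_{h}}^{L}\leq \Bigl(\frac{\kappa}{\varepsilon}\Bigr)^{s'}\sum_{j\leq J_\varepsilon} 2^{j(s-s')}\|\delta_h^j v\|_{\ell^2_h}=\Bigl(\frac{\kappa}{\varepsilon}\Bigr)^{s'}\|v\|_{\dot{B}^{s-s'}_{h}}^{L},
\]
which is exactly \eqref{BernsteinLF}.

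For \eqref{BernsteinHF}, I would proceed symmetrically by writing
\[
\|v\|_{\dot{B}^{s}_{h}}^{H}=\sum_{j\geq J_\varepsilon} 2^{-js'}\cdot 2^{j(s+s')}\|\delta_h^j v\|_{\ell^2_h},
\]
and now using $s'>0$ again, the factor $2^{-js'}$ is decreasing in $j$ and is therefore bounded, for every $j\geq J_\varepsilon$, by $2^{-J_\varepsilon s'}=(\varepsilon/\kappa)^{s'}$. Pulling this out and recognizing the remaining sum as $\|v\|_{\dot{B}^{s+s'}_{h}}^{H}$ yields the claim.

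There is no real obstacle here: the argument is essentially an elementary observation about geometric sums truncated at the threshold $J_\varepsilon$. The only thing to keep in mind is the sign of $s'$, which is what allows the monotonicity of $2^{\pm js'}$ to be used to control the relevant terms uniformly in $j$. Note that the link with Proposition \ref{rigorousBernstein} is conceptual rather than technical: the Bernstein estimate $\|\ID_h\delta_h^jv\|_{\ell^2_h}\sim 2^j\|\delta_h^jv\|_{\ell^2_h}$ is the reason the scale $2^j$ should be thought of as a ``frequency'', and hence the reason why inequalities of the form \eqref{BernsteinLF}--\eqref{BernsteinHF} deserve the name of Bernstein-type inequalities in the low/high frequency regimes.
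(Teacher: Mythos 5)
Your proof is correct and is precisely the elementary computation the paper has in mind when it states that the inequalities are "immediately deduced" from the definition of $J_\varepsilon$ (the paper gives no further detail). Your closing remark is also accurate: as stated, \eqref{BernsteinLF}--\eqref{BernsteinHF} only use the monotonicity of $2^{\pm js'}$ on either side of the threshold $2^{J_\varepsilon}=\kappa/\varepsilon$, and Proposition \ref{rigorousBernstein} enters only to justify interpreting $2^j$ as the frequency scale of $\ID_h$ on the $j$-th dyadic block.
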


Next, we prove the embedding results of discrete Besov spaces in $\dot{h}_h^s$ and $l^\infty_h$ stated in Proposition \ref{prop:embed}. One of the important implications of this embedding is that the estimates obtained for Besov norms \eqref{eq:BesovNorm2} lead to results in well-known norms. We refer to \cite[Proposition 2.39]{HJR} for a more general embedding result in the continuous framework.

\begin{proof}[Proof of Proposition \ref{prop:embed}]
    First of all, the embedding $B^{s}_h\hookrightarrow\dot{h}^s_h$ follows immediately by the definition of the $\dot{h}^s_h$ and by \eqref{eq:suppVarphij}-\eqref{eq:sumVarphij}, using the Minkowski inequality.

    In the sequel, we focus on proving the estimate \eqref{eq:l-infty-embed}. Indeed, the property \eqref{eq:sumVarphij} implies that, for every $\xi\in \left[-\frac{\pi}{h},\frac{\pi}{h}\right]$, 
    \[v(\xi)=\sum_{j\in \Z} (\delta_h^j v)(\xi).\]
    Therefore, 
    \[\|v\|_{l^\infty_h} \leq \sum_{j\in \Z} \|\delta_h^j v\|_{l^\infty_h}.\]
    From the definition \eqref{eq:BesovNorm2} of the Besov norm, it is enough to prove that:
    \begin{equation}
        \label{eq:claim_linfty_l2}
        \|\delta_h^j v\|_{l^\infty_h} \leq C\, 2^\frac{j}{2}\|\delta_h^j v\|_{l^2_h}.
    \end{equation}
    Indeed, from \eqref{eq:suppVarphij}, it follows that $\varphi_j\cdot \chi_{F_h(j)}=\varphi_j$, where $\chi_A$ stands for the characteristic function of a set $A$. The discrete Fourier inverse formula \eqref{eq:inverseDiscreteFourier} implies that:
    \[
        \begin{aligned}
        (\delta_h^j v)_n&=\frac{1}{\sqrt{2\pi}}\int_{-\frac{\pi}{h}}^{\frac{\pi}{h}} e^{i\xi h n} \hat{v}(\xi)\varphi_j(\xi)\dxi\\
        &= \frac{1}{\sqrt{2\pi}}\int_{-\frac{\pi}{h}}^{\frac{\pi}{h}} e^{i\xi h n} \hat{v}(\xi)\varphi_j(\xi)\chi_{F_h(j)}\dxi.
        \end{aligned}
    \]
    Since $\left|e^{i\xi h n}\right|=1$, the Cauchy-Schwarz inequality and Parseval's equality further imply that:
    \[
    \begin{aligned}
        \left|(\delta_h^j v)_n\right|&\leq \frac{1}{\sqrt{2\pi}}\|\hat{v}\varphi_j\|_{L^2\left(\left[-\frac{\pi}{h},\frac{\pi}{h}\right]\right)}\|\chi_{F_h(j)}\|_{L^2\left(\left[-\frac{\pi}{h},\frac{\pi}{h}\right]\right)}\\
        &= \frac{1}{\sqrt{2\pi}} \|\delta_h^j v\|_{l^2_h} \left|F_h(j)\right|^{\frac{1}{2}},
        \end{aligned}
    \]
    where $|A|$ stands for the Lebesgue measure of the set $A$. Therefore, we are left to prove that:
    \begin{equation}
        \label{eq:claimMeasureFhj}
        \left|F_h(j)\right|\leq C \cdot 2^j.
    \end{equation}
    In order to prove this claim, we observe that an element $\xi\in \left[-\frac{\pi}{h},\frac{\pi}{h}\right]$ belongs to $F_h(j)$ if and only if
    \begin{equation}
    \label{eq:observation-sinxih}
    \frac{\sin(\xi h)}{\xi h} \xi \in \left[\frac{3}{4}2^j,\frac{4}{3}2^{j+1}\right].
    \end{equation}
    Next, we fix a constant $c\in \left(0,\frac{\pi}{2}\right)$ and notice from the plot in Figure \ref{fig:plot-sinx-x} that there exists another constant $M_c\in (0,1)$ such that, for every $x\in [-\pi+c,\pi+c]$,
    \begin{equation}
    \label{eq:estimate-sinx-x}
    M_c\leq \frac{\sin(x)}{x}\leq 1.
    \end{equation}
    Therefore, if $\xi h\in [-\pi+c,\pi-c]$, then \eqref{eq:observation-sinxih} implies that $\xi\in \left[\frac{3}{4}2^j,\frac{4}{3 M_c}2^{j+1}\right]$. We can now estimate the Lebesgue measure of a part of $F_h(j)$:
    \begin{equation}
    \label{eq:FhjMeasureEst1}
    \left|F_h(j)\cap \left[\frac{-\pi+c}{h},\frac{\pi-c}{h}\right]\right|\leq 2^j\left(\frac{8}{3M_c}-\frac{3}{4}\right).
    \end{equation}
    Thence, we consider the case $\xi h\in [\pi-c, \pi]$, which means that $\pi-\xi h\in [0,c]\subset [-\pi+c,\pi-c]$. In this case, we have
    \[\frac{\sin(\pi-\xi h)}{\pi-\xi h} \in [M_c,1].\]
    Therefore, if $\xi$ is such that \eqref{eq:observation-sinxih} holds, then the equality $\sin(x)=\sin(\pi-x)$ implies that:
    \[\frac{\pi}{h}-\xi = \frac{\sin(\xi h)}{\xi h}\xi \frac{\pi-\xi h}{\sin(\pi-\xi h)} \in \left[\frac{3}{4}2^j,\frac{4}{3M_c}2^{j+1}\right]. \]
    This leads us to an estimate of the Lebesgue measure of a second part of $F_h(j)$:
    \begin{equation}
    \label{eq:FhjMeasureEst2}
    \left|F_h(j)\cap \left[\frac{\pi-c}{h},\frac{\pi}{h}\right]\right|\leq 2^j\left(\frac{8}{3M_c}-\frac{3}{4}\right).
    \end{equation}
    By matters of symmetry, we arrive also to an estimate regarding the third part of $F_h(j)$:
    \begin{equation}
    \label{eq:FhjMeasureEst3}
    \left|F_h(j)\cap \left[\frac{-\pi}{h},\frac{-\pi+c}{h}\right]\right|\leq 2^j\left(\frac{8}{3M_c}-\frac{3}{4}\right).
    \end{equation}
    Combining \eqref{eq:FhjMeasureEst1}, \eqref{eq:FhjMeasureEst2} and \eqref{eq:FhjMeasureEst3} we obtain the claim \eqref{eq:claimMeasureFhj}, which finishes the proof.
\end{proof}

\section{Proof Theorem \ref{thm:NumHyp}: Hypocoercivity for semi-discretized hyperbolic systems}
\label{sec:proof-asymptotic}
This section is dedicated to the proof of the large-time asymptotic result \eqref{e:decayThm12}. Across the paper, the notations $E\sim F$ and $E\lesssim F$ signify that there exists a universal constant $C>1$ such that $\frac{1}{C} F \leq E\leq C F$ and $E\leq C F$, respectively.
\subsection{Decay for $\ID_hU$}
To derive sharp time-decay estimates for the semi-discrete hyperbolic system \eqref{mainr:Systn}, inspired by \cite{CBSZ}, we consider the Lyapunov functional
\begin{equation}\label{def:GeneralLyaNoF}
\begin{aligned}
  \cL(t):= \|U (t)\|_{h_h^1}^2+\eta_{0} t\|\ID_h U (t)\|_{\ell_h^2}^2+\cI(t),
  \end{aligned}
  \end{equation}
where the corrector term  $\cI(t)$ is defined by
\begin{align}
&\cI(t):= \sum_{k=1}^{N-1} \varepsilon_k \bigl(BA^{k-1}U ,BA^k\ID_hU \bigr)_{\ell_h^2},\label{I}
\end{align}
with positive constants $\eta_{0}$ and $\var_{i}$, $i=1,2,...,k-1$, to be determined later. Then, we compute the time-derivative of $\mathcal{L}$ and show that $\dfrac{d}{dt}\mathcal{L}\leq 0$, which, in turn, leads to the desired decay estimates.
Indeed, using that $\widetilde{B}$ is strongly dissipative (property \eqref{BD}) and Proposition \ref{Prop:IBP}, taking the scalar product of \eqref{mainr:Systn} with $U$, we get
\begin{equation}\label{decay1}
    \dfrac{d}{dt}\|U (t)\|_{\ell_h^2}^2+2\lambda\|U _{2}(t)\|_{\ell_h^2}^2\leq 0.
\end{equation}
Similarly, applying the linear operator $\ID_h$ to \eqref{mainr:Systn} and taking the scalar product with $\ID_hU$, we obtain
\begin{equation}\label{decay2}
    \dfrac{d}{dt}\|\ID_hU (t)\|_{\ell_h^2}^2+2\lambda\|\ID_hU _{2}(t)\|_{\ell_h^2}^2\leq 0.
\end{equation}
Moreover, this implies the time-weighted estimate
\begin{equation}\label{decay3}
    \dfrac{d}{dt}\left(\eta_{0} t\|\ID_hU (t)\|_{l_h^2}^2\right)+2\lambda\eta_0t \|\ID_hU _{2}(t)\|_{l_h^2}^2\leq \eta_{0}\|\ID_hU (t)\|_{l_h^2}^2.
\end{equation}
Gathering \eqref{decay1}, \eqref{decay2} and \eqref{decay3}, we obtain
\begin{equation}
    \dfrac{d}{dt}\Big(\|U (t)\|_{h_h^1}^2+\eta_{0} t\|\ID_h U (t)\|_{l_h^2}^2\Big)+2\lambda \|U _{2}(t)\|_{l_h^2}^2+2\lambda(1+ \eta_{0} t) \|\ID_hU _{2}(t)\|_{l_h^2}^2\leq \eta_{0}\|\ID_hU (t)\|_{l_h^2}^2.\label{L}
\end{equation}
As one can observe in \eqref{L}, there are no dissipative effects for the component $U_1$. To recover such dissipation, we take the time-derivative of the corrector term $\mathcal{I}$.
\begin{lemma}[Time-derivative of $\cI$]\label{lem:corrector}
For any positive constant $\var_{0}$, there exists a sequence $\{\var_{k}\}_{k=1,...,N-1}$ of small positive constants such that
\begin{align}
\dfrac{d}{dt}\cI(t) +\frac{1}{2}\sum_{k=1}^{N-1} \varepsilon_k\|BA^k\ID_hU (t)\|_{l_h^2}^2 \leq \var_{0}\|U _{2}(t)\|_{l_h^2}^2+\var_{0}\| \ID_hU _{2}(t)\|_{l_h^2}^2.\label{ddtI}
\end{align}
\end{lemma}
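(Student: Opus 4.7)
The plan is to differentiate each cross-product appearing in $\mathcal{I}$ using the equation $\partial_t U = -A\ID_h U - BU$, isolate the coercive contribution $-\|BA^k\ID_h U\|^2_{\ell^2_h}$ generated by the hyperbolic transport, and then absorb every remaining error term into this dissipation by means of Young's inequality combined with a cascade choice of the weights $\varepsilon_k$.

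A direct computation, using that constant matrices commute with $\ID_h$, gives for each $k\in\{1,\dots,N-1\}$
\begin{align*}
\frac{d}{dt}\bigl(BA^{k-1}U, BA^k \ID_h U\bigr)_{\ell^2_h} = \;&-\|BA^k \ID_h U\|^2_{\ell^2_h} -(BA^{k-1}BU, BA^k \ID_h U)_{\ell^2_h}\\
& - (BA^{k-1}U, BA^{k+1}\ID_h^2 U)_{\ell^2_h} - (BA^{k-1}U, BA^k B \ID_h U)_{\ell^2_h}.
\end{align*}
The terms bearing $\ID_h^2 U$ and $B\ID_h U$ would then be treated by one application of the discrete integration by parts of Proposition~\ref{Prop:IBP}, yielding $(BA^{k-1}\ID_h U, BA^{k+1}\ID_h U)_{\ell^2_h}$ and $(BA^{k-1}\ID_h U, BA^k BU)_{\ell^2_h}$ respectively. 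Thanks to the block structure \eqref{BD} of $B$, any outer $B$ acting on $U$ produces a factor proportional to $\|U_2\|_{\ell^2_h}$, and at the endpoint $k=1$ any outer $B$ acting on $\ID_h U$ produces $\|\ID_h U_2\|_{\ell^2_h}$.

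The cascade is then performed by choosing $\varepsilon_k = c^k$ for a sufficiently small $c>0$ and applying Young's inequality with parameter $\alpha=c$ to the bilinear cross-term:
$$|(BA^{k-1}\ID_h U, BA^{k+1}\ID_h U)_{\ell^2_h}|\leq \tfrac{1}{2c}\|BA^{k-1}\ID_h U\|^2_{\ell^2_h} + \tfrac{c}{2}\|BA^{k+1}\ID_h U\|^2_{\ell^2_h}.$$
Multiplying by $\varepsilon_k = c^k$, these contributions become exactly $\tfrac12\varepsilon_{k-1}\|BA^{k-1}\ID_h U\|^2_{\ell^2_h}$ and $\tfrac12\varepsilon_{k+1}\|BA^{k+1}\ID_h U\|^2_{\ell^2_h}$, which telescope against one half of the diagonal dissipation sum. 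The boundary $k=1$ yields $\|B\ID_h U\|^2_{\ell^2_h}\lesssim \|\ID_h U_2\|^2_{\ell^2_h}$ that enters directly the right-hand side of \eqref{ddtI}; the boundary $k=N-1$ produces an unmatched $\|BA^N\ID_h U\|^2_{\ell^2_h}$, which I would reduce to lower levels via the Cayley--Hamilton identity $A^N = -\sum_{k=0}^{N-1} a_k A^k$, so that $\|BA^N v\|^2\lesssim \sum_{k=0}^{N-1}\|BA^k v\|^2$: the $k=0$ piece contributes to $\|\ID_h U_2\|^2_{\ell^2_h}$, while the remaining pieces are absorbed back into the dissipation at a cost of order $c^N$, which is harmless for $c$ small. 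The $\|U_2\|$-linear error terms are finally split by Young's inequality into a dissipative piece and a $\|U_2\|^2_{\ell^2_h}$-piece whose coefficient is tuned to the prescribed $\varepsilon_0$.

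The main obstacle is the joint calibration of the cascade ratio $c$: it must be small enough so that the absorption of the cross terms closes simultaneously at the lower and higher levels of the dissipation sum, yet compatible with an arbitrary target $\varepsilon_0>0$. The flexibility is secured by choosing $c$ last, after all universal constants depending only on the matrices $A$ and $B$ have been fixed.
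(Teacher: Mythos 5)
Your computation of $\frac{d}{dt}\cI$, the subsequent integration by parts, the use of the block structure \eqref{BD} to convert outer factors of $B$ into $\|U_2\|_{\ell^2_h}$ or $\|\ID_h U_2\|_{\ell^2_h}$, and the Cayley--Hamilton reduction at the endpoint $k=N-1$ all coincide with the paper's argument. The genuine gap is the choice $\varepsilon_k=c^k$. At each interior level $j$ of the dissipation sum, two cross terms arrive: from $k=j+1$ you get $\varepsilon_{j+1}\cdot\frac{1}{2c}\|BA^{j}\ID_h U\|_{\ell^2_h}^2=\frac{\varepsilon_j}{2}\|BA^{j}\ID_h U\|_{\ell^2_h}^2$, and from $k=j-1$ you get $\varepsilon_{j-1}\cdot\frac{c}{2}\|BA^{j}\ID_h U\|_{\ell^2_h}^2=\frac{\varepsilon_j}{2}\|BA^{j}\ID_h U\|_{\ell^2_h}^2$. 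Their sum is exactly $\varepsilon_j\|BA^j\ID_h U\|_{\ell^2_h}^2$, i.e.\ the \emph{entire} available dissipation at level $j$, not half of it; nothing is left to keep on the left-hand side of \eqref{ddtI}, let alone to absorb the additional contributions coming from $\cI^1_k$, $\cI^2_k$ and the Cayley--Hamilton remainders. Choosing $c$ smaller does not help: the obstruction is the scale-invariant identity $\varepsilon_k^2=\varepsilon_{k-1}\varepsilon_{k+1}$ satisfied by every geometric sequence, and an AM--GM argument shows that for general $N$ no choice of (even $k$-dependent) Young parameters can close the chain at all doubly-hit interior levels simultaneously under that identity.

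What the absorption actually requires is strict log-concavity of the weights, of the form $C\varepsilon_k^2\leq\frac18\varepsilon_{k-1}\varepsilon_{k+1}$ for $k=1,\dots,N-2$, together with $C\varepsilon_{N-1}^2\leq\frac18\varepsilon_j\varepsilon_{N-2}$ for the Cayley--Hamilton piece and $C\varepsilon_k^2\leq\frac{\varepsilon_k\varepsilon_0}{8}$ for the terms producing $\|U_2\|_{\ell^2_h}^2$. The paper secures all of these by taking $\varepsilon_k=\varepsilon^{m_k}$ with exponents satisfying $m_k\geq\frac{m_{k-1}+m_{k+1}}{2}+\delta$ and $m_{N-1}\geq\frac{m_k+m_{N-2}}{2}+\delta$ for some $\delta>0$, so that $\varepsilon_k^2/(\varepsilon_{k-1}\varepsilon_{k+1})\leq\varepsilon^{2\delta}$ can be made arbitrarily small by shrinking $\varepsilon$. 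Replacing your geometric sequence by such a strictly log-concave one repairs the argument; the rest of your proposal is sound and matches the paper's proof.
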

The proof of Lemma \ref{lem:corrector} is a direct adaptation of the computations done in \cite{BZ} to our discrete setting. Its proof is relegated to the appendix, Section \ref{A:lem:correct}.

Next, we fix suitably small $\varepsilon_{k}, k=1,2,...,N-1$, such that \eqref{ddtI} holds and,  by applying the Cauchy-Schwarz inequality in \eqref{I}, one has:
 \begin{align}
 & \mathcal{L}(t)\sim \|U(t)\|_{h_h^1}^2+\eta_{0} t\|\ID_h U(t)\|_{\ell_h^2}^2. \label{Lsim}
 \end{align}
Combining the energy inequality \eqref{L} and the estimate \eqref{ddtI} of  the corrector term, we obtain
\begin{equation}\label{mmm}
\begin{aligned}
    &\dfrac{d}{dt}\mathcal{L}(t)+\lambda \|U _{2}(t)\|_{\ell_h^2}^2+\lambda(1+ 2 \eta_{0} t) \|\ID_hU _{2}(t)\|_{\ell_h^2}^2+\frac{1}{2}\sum_{k=1}^{N-1} \varepsilon_k\|BA^k\ID_h{U}(t)\|_{\ell_h^2}^2\\
    &\leq \eta_{0}\|\ID_h U (t)\|_{\ell_h^2}^2+\var_{0}\|U_{2}(t)\|_{\ell_h^2}^2+\var_{0}\|\ID_hU _{2}(t)\|_{\ell_h^2}^2.
\end{aligned}
\end{equation}
From \cite[Lemma 1]{BZ}, we have that, for $y\in \mathbb{C}^N$, the function
\begin{align}\label{intro:normeq}
\mathcal{N}(y):=\bigg(\sum_{k=0}^{N-1} |BA^{k}y|^2\bigg)^{\frac{1}{2}} \quad \text{defines a norm on} \:\mathbb{C}^N,\end{align}
which, by standard properties of finite-dimensional spaces, is equivalent to any other norm, in particular to the Euclidean one. Using this norm equivalence, in the case $y=D_h U$, we get that:
$$
\lambda\|\ID_h U _{2}(t)\|_{\ell^2_h}^2+\sum_{k=1}^{N-1} \varepsilon_k\|BA^k\ID_hU (t)\|_{\ell^2_h}^2\geq \frac{\var_{*}}{C_{2}}\|\ID_h U (t)\|_{\ell^2_h}^2,
$$
with $\var_{*}:=\min\{\lambda, \var_{1}, \var_{1},...,\var_{N-1}\}$ and $C_{2}>0$ a constant depending only on $(A,B)$ and $N$.  Therefore, to ensure the coercivity of \eqref{mmm}, we adjust the coefficients appropriately as
$$
0<\eta_{0}<\frac{\var_{*}}{4C_{2}} ,\quad\quad 0<\var_{0}<\frac{\lambda}{2},
$$
such that 
\begin{align}
\dfrac{d}{dt}\mathcal{L}(t) +\frac{\lambda}{2} \| U _{2}(t)\|_{\ell^2_h}^2+\lambda \left(\frac{1}{2}+\eta_{0} t\right) \| \ID_h U _{2}(t)\|_{\ell^2_h}^2+\frac{\var_{*}}{4C_{2}}\|\ID_h U (t)\|_{\ell^2_h}^2\leq 0.\label{Lineq}
\end{align} 
Therefore, by \eqref{Lineq}, we have  $\mathcal{L}(t)\leq \mathcal{L}(0)$, which by the equivalence in \eqref{Lsim}  leads to
\begin{equation}
\begin{aligned}
&\|U (t)\|_{\ell^2_h}+(1+t)^{\frac12}\|\ID_h U (t)\|_{\ell^2_h}\leq C\|U_{0}\|_{h^1}.\label{uptoU1}
\end{aligned}
\end{equation}

\subsection{Time-decay estimates for $U_2$}

Taking the inner product of the equation satisfied by $U_2$ in \eqref{mainr:Systn} with $U _{2}$ and using the property \eqref{BD}, we get
\begin{equation}\label{ddtU22}
    \begin{aligned}
    &\frac{d}{dt}\|U _{2}(t)\|_{\ell^2_h}^2+2\lambda\|U _{2}(t)\|_{\ell^2_h}^2\lesssim \|\ID_hU (t)\|_{\ell^2_h}\|U _{2}(t)\|_{\ell^2_h}.
    \end{aligned}
\end{equation}
Dividing the above inequality \eqref{ddtU22} by $\sqrt{\|U _{2}(t)\|_{\ell^2_h}^2+\epsilon}$, employing Gr\"onwall's inequality and then letting $\epsilon\rightarrow0$, we obtain
\begin{align}
&\|U _{2}(t)\|_{\ell^2_h}\lesssim e^{-\lambda t}\|U_{0,2}\|_{\ell^2_h}+\int_{0}^{t}e^{-\lambda(t-\tau)} \|\ID_hU(\tau)\|_{\ell^2_h} ~d\tau.\label{this111}
\end{align}
This inequality, together with Lemma \ref{A:lem:estimateExponential}, leads to
\begin{align}
\|U _{2}(t)\|_{\ell^2_h}&\leq e^{-\lambda t}\|U_{0,2}\|_{\ell^2_h}+\|U_0\|_{h^1_h}\int_{0}^{t}e^{-\lambda (t-\tau)}(1+\tau)^{-\frac{1}{2}}\,d\tau \lesssim (1+t)^{-\frac{1}{2}}\|U_{0}\|_{h^1_h}.\nonumber
\end{align}
The proof of Theorem \ref{thm:NumHyp} is concluded. \qed

\section{Proof Theorem \ref{thm:NumRelax}: Numerical relaxation limit}
\label{sec:proof-relaxation-result}
In this section, we provide the proof of the relaxation estimate \eqref{StrongRelax}.
\subsection{Uniform-in-$\varepsilon$ estimates for \eqref{mainR:inEuler}}
\label{sec:unif-in-epsilon-estimates}
Applying the localisation operator $\delta^j_h$ to the system \eqref{mainR:inEuler}, we obtain
\begin{equation}\label{syst:linEulerj} 
\left\{\begin{aligned}
   & \d_t \rho^\varepsilon_j + \ID_hu^\varepsilon_j=0,
    \\ & \varepsilon^2\d_t u^\varepsilon_j + \ID_h\rho^\varepsilon_j +u^\varepsilon_j=0,
\end{aligned}\right.
\end{equation}
where we used the notation $f_j:=\delta^j_hf$ for any $f\in \ell^2_h$. From here, the analysis is inspired by the computations done in \cite{CBD3}, but with certain modifications aimed to sharpen, in this linear setting, the convergence ratio to $\mathcal{O}(\eps^2)$, instead of $\mathcal{O}(\eps)$.
 \medbreak
\noindent\textbf{Low-frequency analysis: $j\leq J_\varepsilon$.}
\medbreak
Defining the damped mode $w^\varepsilon= \ID_h\rho^\varepsilon +u^\varepsilon$ and inserting it in \eqref{syst:linEulerj}, we have
\begin{equation}\label{syst:linEulerj-damped-mode}
\left\{\begin{aligned}
   & \d_t \rho^\varepsilon_j - \ID_h^2\rho^\varepsilon_j =-\ID_hw^\varepsilon_j,
    \\ & \d_t w^\varepsilon_j + \frac{1}{\varepsilon^2}w^\varepsilon_j=\ID_h^3\rho^\varepsilon_j-\ID_h^2w^\varepsilon_j,
\end{aligned}\right.
\end{equation}
Taking the scalar product of the first equation of \eqref{syst:linEulerj-damped-mode} with $\rho_j^\varepsilon$, we obtain, by the Cauchy-Schwarz inequality, that
\begin{align}
    \dfrac{1}{2}\dfrac{d}{dt}\|\rho_j^\varepsilon\|_{\ell^2_h}^2+\|\ID_h\rho_j^\varepsilon\|_{\ell^2_h}^2 &\leq \|\ID_hw_j^\varepsilon\|_{\ell^2_h}\|\rho_j^\varepsilon\|_{\ell^2_h}.
\end{align}
Using the Bernstein estimate in Proposition \ref{rigorousBernstein}, we have
\begin{align}
    \dfrac{1}{2}\dfrac{d}{dt}\|\rho_j^\varepsilon\|_{\ell^2_h}^2+2^{2j}\| \rho_j^\varepsilon\|_{\ell^2_h}^2 &\lesssim \|\ID_hw_j^\varepsilon\|_{\ell^2_h}\|\rho_j^\varepsilon\|_{\ell^2_h}.
\end{align}
We can now apply Lemma \ref{SimpliCarre} which yields 
\begin{align}\label{eq:12}
\|\rho_j^\varepsilon(T)\|_{\ell^2_h}+2^{2j}\int_0^T\| \rho_j^\varepsilon\|_{\ell^2_h} \lesssim \|\rho_{0,j}^*\|_{\ell^2_h}+\int_0^t\|\ID_hw_j^\varepsilon\|_{\ell^2_h}.
\end{align}
Then, for $s\in \R$, multiplying \eqref{eq:12} by $2^{js}$ and summing on $j\leq J_\varepsilon$, we obtain, with the notations in Definition \ref{def:freqbesov}, that
\begin{align} \label{LFenergrho}
\|\rho^\varepsilon(T)\|_{B^{s}_{h}}^L+\|\rho^\varepsilon\|_{L^1_T(B^{s+2}_{h})}^L &\lesssim \|\rho_{0}^*\|^L_{B^{s}_{h}}+\|w^\varepsilon\|^L_{L^1_T(B^{s+1}_{h})}.
\end{align}
Performing similar estimates for $w^\varepsilon_j$, we obtain
\begin{align} \label{LFenergw}
\|w^\varepsilon(T)\|_{B^{s-1}_{h}}^L+\dfrac{1}{\varepsilon^2}\|w^\varepsilon\|_{L^1_T(B^{s-1}_{h})}^L &\lesssim  \| w_{0}^*\|^L_{B^{s-1}_{h}}+\|\rho^\varepsilon\|^L_{L^1_T(B^{s+2}_{h})}+\|w^\varepsilon\|^L_{L^1_T(B^{s+1}_{h})}.
\end{align}

Using the low-frequency Bernstein inequality \eqref{BernsteinLF}, we have
\begin{align} 
\|w^\varepsilon\|^L_{L^1_T(B^{s+1}_{h})}\label{BernLF2}
 &\lesssim \dfrac{\kappa^2}{\varepsilon^2}\|w^\varepsilon\|^L_{L^1_T(B^{s-1}_{h})}.
\end{align}
Summing \eqref{LFenergrho} and \eqref{LFenergw}, using \eqref{BernLF2} and choosing $\kappa$ suitably small, we obtain
\begin{equation} \label{LFenergCCL}
\begin{aligned}
\|\rho^\varepsilon(T)\|^L_{B^{s}_{h}}+\| w^\varepsilon(T)\|^L_{B^{s-1}_{h}}+\dfrac{1}{\varepsilon^2}\|w^\varepsilon\|^L_{L^1_T(B^{s-1}_{h})} &\lesssim \|\rho_{0}^*\|^L_{B^{s}_{h}}+\|w_{0}^*\|^L_{B^{s-1}_{h}}
    \\&\lesssim \|\rho_{0}^*\|^L_{B^{s}_{h}}+\|u_{0}^*\|^L_{B^{s-1}_{h}}.
\end{aligned}
\end{equation}
\medbreak
\noindent\textbf{High-frequency analysis: $j> J_\varepsilon$.}
\medbreak
We define the following Lyapunov functional
\begin{align}
    \label{Lyahf}
    \mathcal{L}^\varepsilon_j= \|(\rho_j^\varepsilon,\varepsilon u_j^\varepsilon)\|_{\ell^2_h}^2+c_1 2^{-2j} \langle u_j^\varepsilon, \ID_h\rho^\varepsilon_j\rangle_{\ell_h^2},
\end{align}
where $c_1$ is a small constant which will be chosen later. By Bernstein’s inequality (Proposition \ref{rigorousBernstein}) and using that $j\geq J_\varepsilon$, we obtain
\begin{align*} 
   2^{-2j} \langle u_j^\varepsilon, \ID_h\rho^\varepsilon_j\rangle_{\ell_h^2}  &\lesssim 2^{-2j}(\|u_j^\varepsilon\|_{\ell_h^2}^2+2^{2j}\| \rho^\varepsilon_j \|_{\ell_h^2}^2)
    \\&= 2^{-2j}\|u_j^\varepsilon\|_{\ell_h^2}^2+\|\rho_j^\varepsilon\|_{\ell_h^2}^2 
    \\&\lesssim \dfrac{\varepsilon^2}{\kappa^2}\|u_j^\varepsilon\|_{\ell_h^2}^2+\|\rho_j^\varepsilon\|_{\ell_h^2}^2,
\end{align*}
and thus $\mathcal{L}_j^\varepsilon\sim \|(\rho_j^\varepsilon,\varepsilon u_j^\varepsilon)\|_{\ell^2_h}^2$ for a suitably small constant $c_1$ depending on $\kappa$.

We now compute the time derivative of $\mathcal{L}^\varepsilon_j$. Concerning the first term, we have from \eqref{syst:linEulerj}:
\begin{align}\label{eqhf3}
    \dfrac{1}{2}\dfrac{d}{dt}\|(\rho^\varepsilon_j,\varepsilon u_j^\varepsilon)\|_{\ell^2_h}^2+\| u^\varepsilon_j\|_{\ell^2_h}^2=0.
\end{align}
For the second term, we get
\begin{align}
  2^{-2j} \dfrac{d}{dt}\langle u_j^\varepsilon, \ID_h\rho^\varepsilon_j\rangle_{\ell_h^2} +  2^{-2j} \dfrac{1}{\varepsilon^2}\|\ID_h\rho^\varepsilon_j\|_{\ell^2_h}^2 = 2^{-2j}\|\ID_hu^\varepsilon_j\|_{\ell^2_h}^2- 2^{-2j}\dfrac{1}{\varepsilon^2} \langle u_j^\varepsilon, \ID_h\rho^\varepsilon_j\rangle_{\ell_h^2}.
\end{align}
Using the Cauchy-Schwarz and Young inequalities and taking into account that $2^{-2j}\leq \varepsilon^2/\kappa^2$, we obtain
\begin{equation} \label{eqhf4}
\begin{aligned} 
  2^{-2j} \dfrac{d}{dt} \langle u_j^\varepsilon, \ID_h\rho^\varepsilon_j\rangle_{\ell_h^2} +  \dfrac{1}{\varepsilon^2}\|\rho^\varepsilon_j\|_{\ell^2_h}^2 &\lesssim \|u^\varepsilon_j\|_{\ell^2_h}^2+ \dfrac{2^{-2j}}{2\varepsilon^2c_2}\|u^\varepsilon_j\|_{\ell^2_h}^2+\dfrac{2^{-2j}c_2}{2\varepsilon^2}\|D_h\rho^\varepsilon_j\|_{\ell^2_h}^2
  \\&\lesssim \|u^\varepsilon_j\|_{\ell^2_h}^2+ \dfrac{1}{2c_2\kappa^2}\|u^\varepsilon_j\|_{\ell^2_h}^2+\dfrac{c_2}{2\varepsilon^2}\|\rho^\varepsilon_j\|_{\ell^2_h}^2,
\end{aligned}
\end{equation}
where $c_2>0$ is small enough in order for $\frac{1}{\eps^2}\|\rho_j^\eps\|_{\ell_h^2}^2$ to absorb the term $\frac{c_2}{2\eps^2}\|\rho_j^\eps\|_{\ell_h^2}^2$.

Since $\varepsilon<1$ and $\kappa$ is fixed, multiplying \eqref{eqhf4} by a constant $c_1$ small enough and adding it to 
\eqref{eqhf3}, we obtain
\begin{align}
    \dfrac{1}{2}\dfrac{d}{dt}\mathcal{L}^\varepsilon_j \lesssim -\dfrac{1}{\varepsilon^2}\|(\rho^\varepsilon_j,\varepsilon u_j^\varepsilon)\|_{\ell^2_h}^2.
\end{align}
Then, using that $\mathcal{L}_j^\varepsilon\sim \|(\rho_j^\varepsilon,\varepsilon u_j^\varepsilon)\|_{\ell^2_h}^2$ and Lemma \ref{SimpliCarre},
we get
\begin{align} \label{eqhf5}
    \|(\rho^\varepsilon_j,\varepsilon u_j^\varepsilon)(T)\|_{\ell^2_h} + \dfrac{1}{\varepsilon^2}\|(\rho^\varepsilon_j,\varepsilon u_j^\varepsilon)\|_{L^1_T(\ell^2_h)} \lesssim \|(\rho^\varepsilon_{0,j},\varepsilon u_{0,j}^*)\|_{\ell^2_h}.
\end{align}
Multiplying \eqref{eqhf5} by $2^{js}$ and summing the resulting equation for $j\geq J_\varepsilon$, we obtain
\begin{align} \label{HfenergCCL}
    \|(\rho^\varepsilon,\varepsilon u^\varepsilon)(T)\|^H_{B^{s}_{h}}+\dfrac{1}{\varepsilon^2}\|(\rho^\varepsilon,\varepsilon u^\varepsilon)\|^H_{L^1_T(B^{s}_{h})} &\lesssim \|(\rho_{0}^*,\varepsilon u_{0}^*)\|^H_{B^{s}_{h}}.
\end{align}
Recalling that $w^\varepsilon=\ID_h\rho^\varepsilon+u^\varepsilon$, thanks to Propositions \ref{rigorousBernstein} and \ref{prop:Bernstein} it is easy to see that
\begin{equation} \label{boundhfw}
\begin{aligned}\|w^\varepsilon\|^H_{L^1_T(B^{s-1}_{h})}&\lesssim \|\rho^\varepsilon\|^H_{L^1_T(B^{s}_{h})}+\|u^\varepsilon\|^H_{L^1_T(B^{s-1}_{h})} 
    \\ &\lesssim \|\rho^\varepsilon\|^H_{L^1_T(B^{s}_{h})}+\dfrac{\varepsilon}{\kappa}\|u^\varepsilon\|^H_{L^1_T(B^{s}_{h})} 
    \\& \lesssim \varepsilon^2\|(\rho_{0}^*,\varepsilon u_{0}^*)\|^H_{B^{s}_{h}}.
\end{aligned}
\end{equation}

\subsection{Error estimates analysis}
We can now justify the relaxation estimate \eqref{StrongRelax}.

\medbreak
We define the error unknown $\bar \rho := \rho^\varepsilon-\rho$, it satisfies
\begin{align}
    \label{eq:error}
    \d_t \bar \rho -\ID_h^2\bar{\rho}=-\ID_hw^\varepsilon.
\end{align}
As in the proof of \eqref{LFenergrho}, we are led to
\begin{align} \label{ErrorEnerg}
    \|\bar \rho(T)\|_{B^{s-2}_{h}}+\|\bar\rho\|_{L^1_T(B^{s}_{h})} &\lesssim \|\rho_0^*-\rho_0\|_{B^{s-2}_{h}} +\|w^\varepsilon\|_{L^1_T(B^{s-1}_{h})}.
\end{align}
Then, using \eqref{LFenergCCL} and \eqref{boundhfw}, we obtain
\begin{equation}\label{BernHF12}
\begin{aligned}\|w^\varepsilon\|_{L^1_T(B^{s-1}_{h})}&= \|w^\varepsilon\|^L_{L^1_T(B^{s-1}_{h})}+\|w^\varepsilon\|^H_{L^1_T(B^{s-1}_{h})} 
    \\&\lesssim \varepsilon^2\left(\|\rho_{0}^*\|^L_{B^{s}_{h}}+\|u_{0}^*\|^L_{B^{s-1}_{h}}\right)+ \varepsilon^2\|(\rho_{0}^*,\varepsilon u_{0}^*)\|^H_{B^{s}_{h}}.
\end{aligned}
\end{equation}
Using the definition of the $(s',h)$-truncation and Theorem \ref{thm:unif_Besov_estimate} we have \begin{align}
     \label{illprep}
     \|\rho_0^*-\rho_0\|_{\dot{B}^{s-2}_{h}}\lesssim \varepsilon^2.
 \end{align}
Inserting \eqref{BernHF12} and \eqref{illprep} into \eqref{ErrorEnerg} and using Theorem \ref{thm:unif_Besov_estimate} again concludes the proof of Theorem \ref{thm:NumRelax}.

\section{Proof of Theorem \ref{thm:unif_Besov_estimate}: Uniform Besov estimates with respect to the grid width}
\label{sec:proof_thm_unif_Besov_estimate}
In this section, we provide the proof of Theorem \ref{thm:unif_Besov_estimate} concerning uniform Besov estimates with respect to the grid width $h$, for regular enough functions.

\begin{proof}[Proof of Theorem \ref{thm:unif_Besov_estimate}]
    We recall that the discrete $\dot{B}_{h}^s$-norm of $\mathcal{T}_h u$, by definition \eqref{eq:BesovNorm2}, reads
    \begin{equation}
       \label{eq:BesovTu_1}
       \|\mathcal{T}_h u\|_{\dot{B}_{h}^s} = \sum_{j\in \Z} 2^{js} \|\delta_h^j \mathcal{T}_h u\|_{\ell_h^2}.
    \end{equation}
    Taking into account that, by definition \eqref{def:truncation}, the function $u$ and the bilateral sequence $\mathcal{T}_h u$ have essentially the same Fourier transform, we use Parseval's equality to write
    \begin{equation}
    \label{eq:deltaTu_1}
    \begin{aligned}
        \|\delta_h^j \mathcal{T}_h u\|_{\ell_h^2}^2 &= \int_{-\frac{\pi}{h}}^{\frac{\pi}{h}}  (\hat{u}(\xi))^2 (\varphi_j(\xi))^2 \dxi \\
        & = \int_{-\frac{\pi}{h}}^{\frac{\pi}{h}}  (\hat{u}(\xi))^2 (1+|\xi|^{2s'}) (\varphi_j(\xi))^2 \frac{1}{1+|\xi|^{2s'}} \dxi.
        \end{aligned}
    \end{equation}
    Now, since ${\rm supp}(\varphi_j) \subseteq F_h(j)$, it means by \eqref{def:Fj} that, if $\varphi_j(\xi) \neq 0$, then 
    \[\left|\frac{\sin(\xi h)}{\xi h}\right| |\xi| \geq \frac{3}{4} 2^j.\]
    Since, $\left|\frac{\sin(x)}{x}\right|\leq 1$, for all $x\in [-\pi,\pi]$, we obtain
    \[\varphi_j(\xi) \neq 0 \Rightarrow |\xi|\geq \frac{3}{4} 2^j.\]
    This fact, together with \eqref{eq:deltaTu_1} and $\varphi_j(\xi)\in [0,1]$, for all $\xi\in[-\pi/h,\pi,h]$, leads to
    \begin{equation}
        \label{eq:deltaTu_2}
         \|\delta_h^j \mathcal{T}_h u\|_{\ell_h^2}^2 \leq \frac{1}{1+\left(\frac{3}{4}\right)^{2s'} 2^{2js'}} \int_{-\frac{\pi}{h}}^{\frac{\pi}{h}}  (\hat{u}(\xi))^2 (1+|\xi|^{2s'}) \dxi.
    \end{equation}
    Applying Parseval's equality again, we deduce that
    \begin{equation}
        \label{eq:deltaTu_3}
         \|\delta_h^j \mathcal{T}_h u\|_{\ell_h^2} \leq \frac{C_{s'}}{1+2^{js'}} \|u\|_{H^{s'}(\IR)}.
    \end{equation}
    Inserting this inequality into \eqref{eq:BesovTu_1}, we get:
    \begin{equation}
        \label{eq:BesovTu_2}
         \|\mathcal{T}_h u\|_{\dot{B}^s_{h}} \leq C_{s'} \|u\|_{H^{s'}(\IR)} \sum_{j\in \Z} \frac{2^{js}}{1+2^{js'}}.
    \end{equation}
    We now claim that the hypotheses of Theorem \ref{thm:unif_Besov_estimate} imply that the series above is convergent. Indeed, one has
    \[\sum_{j\in \Z} \frac{2^{js}}{1+2^{js'}} = \sum_{j \leq 0} \frac{2^{js}}{1+2^{js'}} + \sum_{j > 0} \frac{2^{js}}{1+2^{js'}} \lesssim \sum_{j \leq 0} 2^{js} + \sum_{j > 0} 2^{j(s-s')}, \]
    which converges provided that $s\in (0,s')$.    
\end{proof}

\section{Numerical simulations}
In this section, we showcase a set of numerical experiments validating our theoretical findings. The simulations in Section \ref{sec:hypo-experiments}, carried out using the \verb|NumPy| and \verb|Matplotlib| Python libraries \cite{harris2020array,Hunter:2007}, confirm the sharpness of the polynomial decay verified by the solutions of the system \eqref{mainr:Systn} (as per Theorem \ref{thm:NumHyp}). Furthermore, the experiments carried out in Section \ref{sec:relax-experiments}  show that the order of convergence $\mathcal{O}(\eps^2)$ obtained in Theorem \eqref{thm:NumRelax} (more specifically, in Corollary \ref{cor:lInftyRelax}) is, in turn, sharp.

\subsection{The numerical hypocoercivity property}
\label{sec:hypo-experiments}
The plot depicted in Figure \ref{fig:decay} validates the polynomial large-time decay estimate \eqref{e:decayThm1}, for a particular instance of \eqref{mainr:Systn} -- namely the linearization of the compressible Euler system \eqref{mainR:inEuler} -- exhibiting a decay rate of exactly $(1+t)^{-\frac{1}{2}}$. The initial data that we used in the simulation is obtained by a cut-off near infinity of the function:
\begin{equation}
\label{def:particularInitialData}
\widetilde{\rho_0}(x)=\widetilde{u_0}(x)= \frac{1}{\sqrt[4]{x^2+10^{-6}}}.
\end{equation}

\begin{figure}[h!]
    \centering
    \includegraphics[scale=0.2]{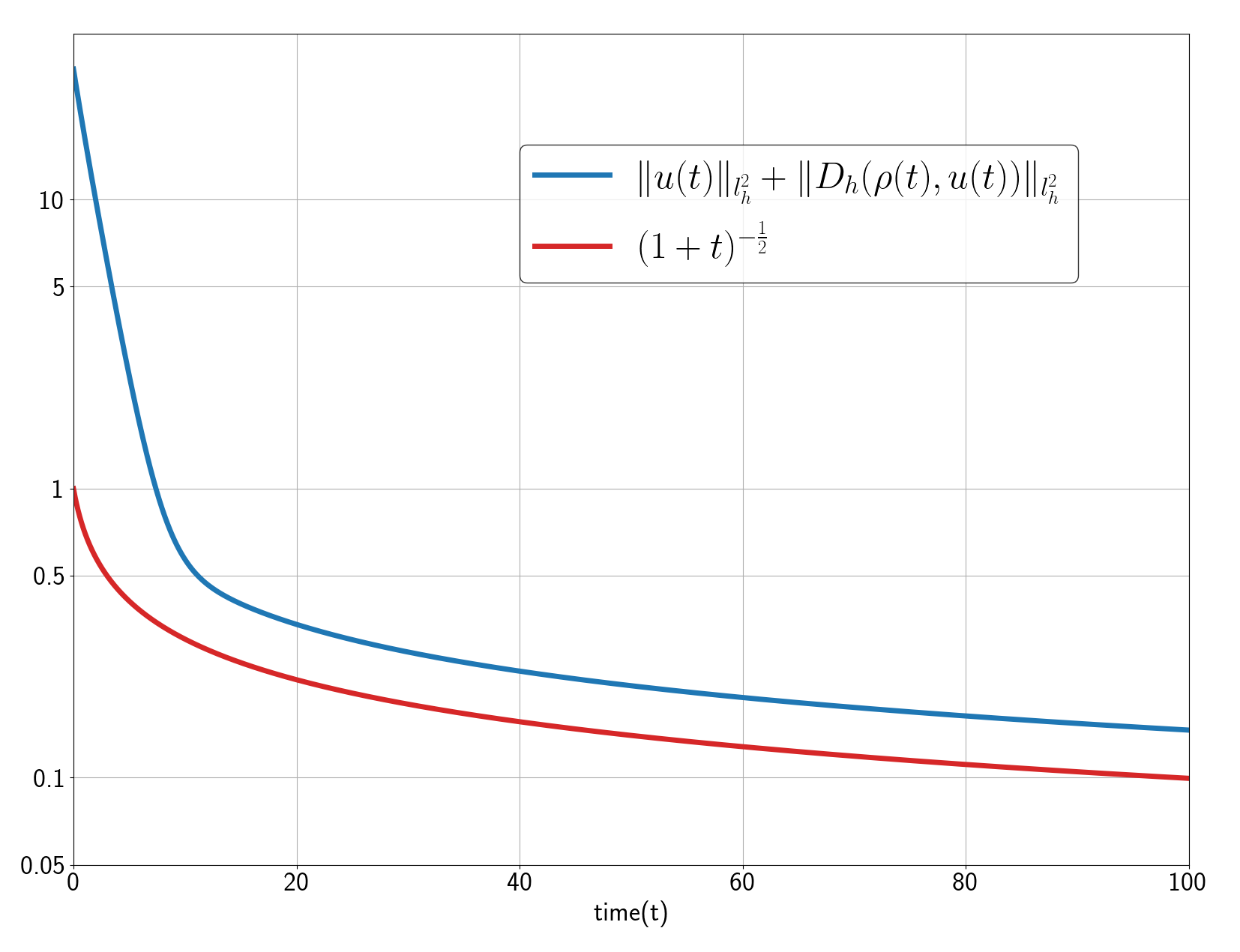}
    \caption{The semi-log plot of the large time behaviour of the solution of \eqref{mainR:inEuler} with parameters $\varepsilon=1$ and $h=2^{-4}$.}
    \label{fig:decay}
\end{figure}

\subsection{The relaxation limit -- error estimates}
\label{sec:relax-experiments}
The objective of the overlapped plot in Figure \ref{fig:overlapped} is to demonstrate that the solutions of \eqref{mainR:inEuler} effectively approximate the discrete heat equation \eqref{mainR:heat}$_1$ for small $\varepsilon$.

\begin{figure}[h!]
    \centering
    \includegraphics[scale=0.24]{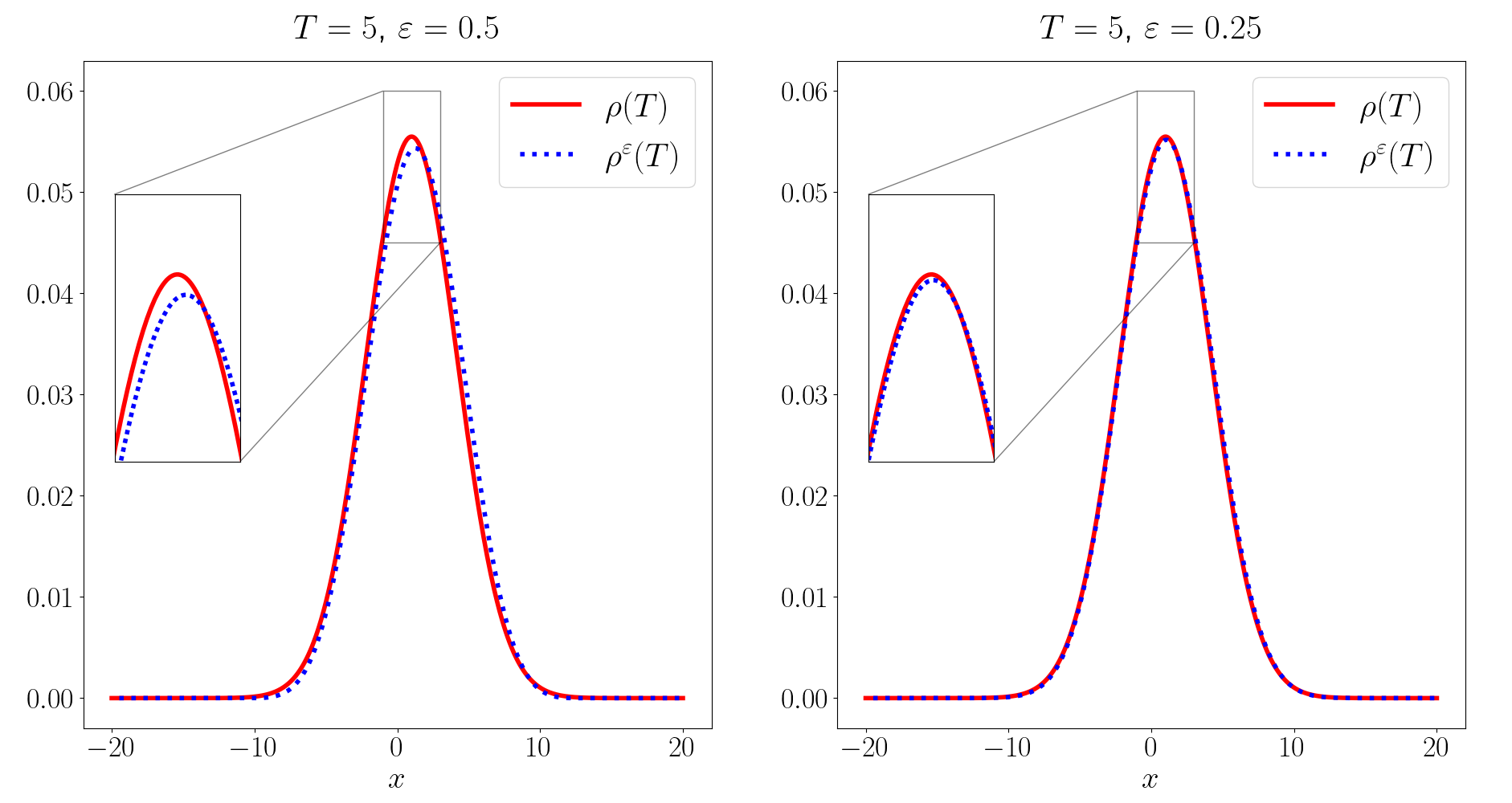}
    \caption{The first component $\rho^\eps$ of the solution of \eqref{mainr:Systn} (blue) approximates the solution $\rho$ of the heat equation \eqref{mainR:heat} (red) as $\eps\to 0$. The plots were generated for $h=2^{-4}$ and $T=5$.}
    \label{fig:overlapped}
\end{figure}

The plot in Figure \ref{fig:errorEps} serves as experimental evidence, indicating that for the initial data
\begin{equation}
\label{def:particularInitialData2}
\widetilde{\rho_0}(x)=e^{-\frac{1}{1-(x-1)^2}}\chi_{(0,2)}(x) \quad\text{ and }\quad\widetilde{u_0}(x)=e^{-\frac{1}{1-(x-1.5)^2}}\chi_{(0.5,2.5)}(x),
\end{equation}
the convergence rate of both the first and the third left-hand side term in \eqref{StrongRelax} is exactly $\mathcal{O}(\varepsilon^2)$, thus proving the sharpness of the rate in Theorem \ref{thm:NumRelax}. Moreover, the table in Figure \ref{fig:errorUniformH} confirms that the relaxation is uniform with respect to the grid width $h$.

\begin{figure}[h!]
    \centering
    \includegraphics[scale=0.22]{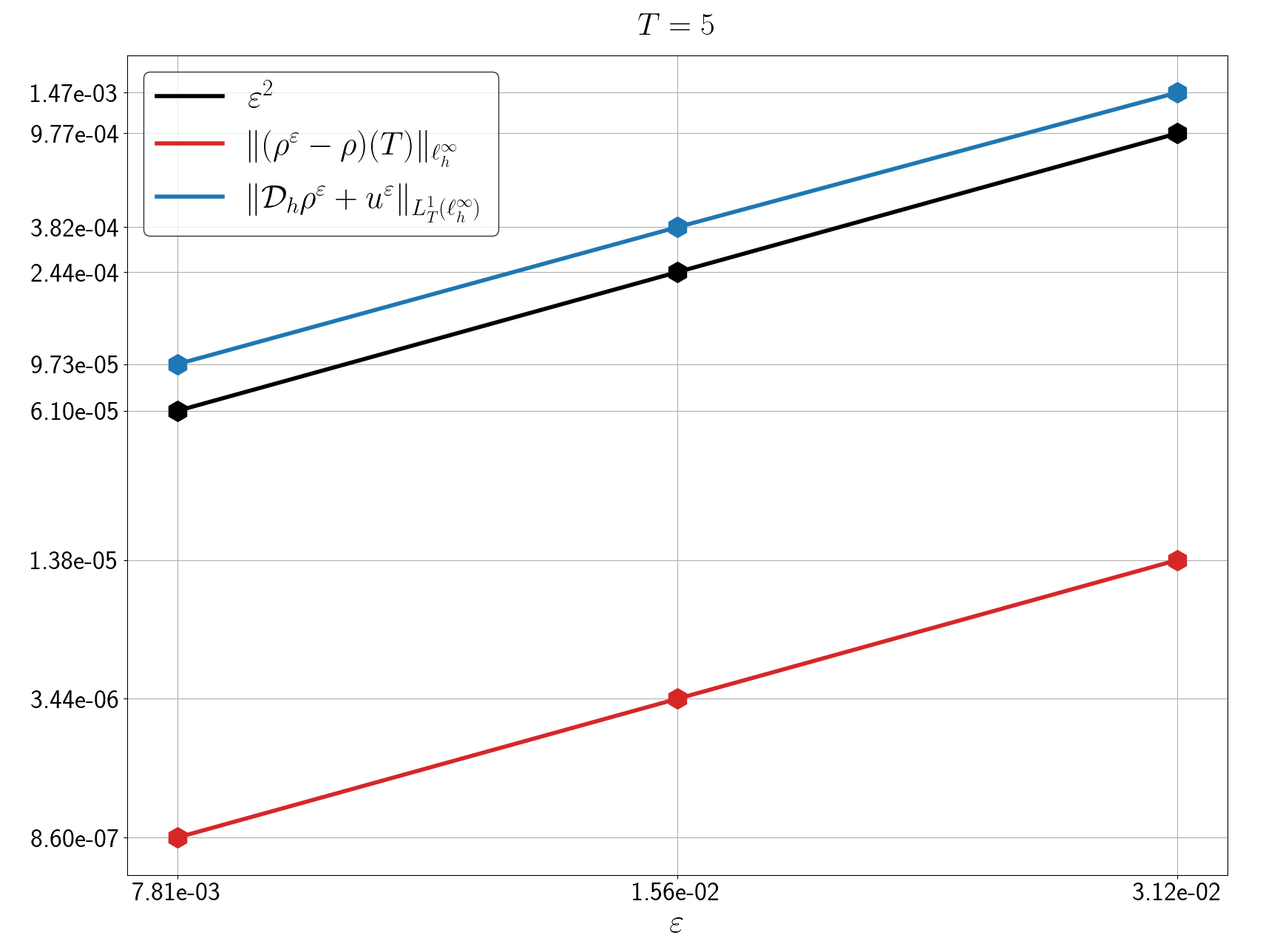}
    \caption{The log-log plot of the approximation error and Darcy law in $l^\infty_h$, obtained in Corollary \ref{cor:lInftyRelax}, as a function of $\varepsilon$, for fixed $h=2^{-4}$ and $T=5$.}
    \label{fig:errorEps}
\end{figure}
\begin{figure}[h!]
    \centering
    \begin{tabular}{|c|c|c|}
        \hline
        $h$ & $\|(\rho^\varepsilon-\rho)(T)\|_{\ell_h^\infty}$ & $\|\ID_h\rho^\varepsilon+u^\varepsilon\|_{L^1_T(\ell^\infty_{h})}$ \\[3pt]
         \hline
         $2^{-4}$ & $1.375812666{\rm e-}05$ & $1.468560202{\rm e-}03$ \\ 
         \hline
         $2^{-5}$ & $1.376071039{\rm e-}05$ & $1.525401187{\rm e-}03$\\ 
         \hline
         $2^{-6}$ & $1.376255148{\rm e-}05$ & $1.537860425{\rm e-}03$ \\ 
         \hline
    \end{tabular}
    \caption{The approximation error and the Darcy law in $l^\infty_h$, obtained in Corollary \ref{cor:lInftyRelax}, in terms of $h$, for fixed $\varepsilon=2^{-5}$ and $T=5$.}
    \label{fig:errorUniformH}
\end{figure}

\vfill 

\pagebreak

\section{Conclusion and extensions} \label{sec:ext}
Our theoretical and experimental evidence demonstrates that the decay estimates and relaxation properties inherent to partially dissipative hyperbolic systems can be effectively captured by one of the simplest and unconditionally stable numerical techniques: the central finite difference scheme. We have introduced a novel approach for numerically approximating the solutions of a class of parabolic equations (the continuous counterpart of \eqref{Z1withW}), utilizing only first-order discrete operators. The new discrete Littlewood-Paley theory we propose may serve as a foundation for addressing other problems related to other discrete equations, particularly those in which frequency decomposition techniques play a central role.
\medbreak

Dedicated to broadening the scope of Theorem \ref{thm:NumRelax}, we discuss some additional research directions.

\begin{enumerate}
\item[1.] \emph{Relaxation for general hyperbolic systems}. The convergence result derived in Theorem \ref{thm:NumRelax} can be extended to address the relaxation of general hyperbolic systems that satisfy the Kalman rank condition \eqref{K}. In fact, following the approach outlined in \cite{DanchinEMS}, under additional conditions on the matrix $A$ (see \cite[p.175]{DanchinEMS}), one can readily generalize our result to show the convergence, as $\varepsilon\to0$, of the discrete hyperbolic system 
   \begin{equation} \left\{ \begin{matrix}\displaystyle \partial_t U_1 + A_{1,1} \cD_h U_1 +A_{1,2}\cD_h U_2=0,\\ \varepsilon^2\left(\partial_t U_2 + A_{2,2}\cD_h U_2\right)+ A_{2,1}\cD_h U_1 =-\widetilde{B} U_2,\end{matrix} \right. \label{GE2}
\end{equation} 
towards the discrete diffusive system
\begin{equation}\label{Z1withW}\partial_tU_1 +  A_{1,1}\cD_hU_1-A_{1,2}\widetilde{B}^{-1}A_{2,1}\cD_h^2 U_1=0.
\end{equation}
Note that under the Kalman rank condition \eqref{K} or the Shizuta-Kawashima condition \cite[]{SK}, the operator $\mathcal{L}=A_{1,2}\widetilde{B}^{-1}A_{2,1}\d_{xx}^2$ was proven to be strongly elliptic in \cite{CBD3}.
    \smallbreak
    \item[2.] \emph{The Jin-Xin approximation}. For a conservation law:
\begin{equation}\label{syst:CL}
\begin{aligned}
  \d_t \rho +\d_xf(\rho)=0,
\end{aligned}
\end{equation}
its Jin-Xin approximation reads:
\begin{equation}\label{syst:JinXin}
\left\{\begin{aligned}
   & \d_t \rho^\varepsilon + \d_x u^\varepsilon=0,
    \\ & \varepsilon^2(\d_t u^\varepsilon + \d_x \rho^\varepsilon)=u^\varepsilon-f(\rho^\varepsilon).
\end{aligned}\right.
\end{equation}
This approximation was introduced in \cite{JinXin} and further examined through a frequency-decomposition approach in \cite{CBSJX}. It should be possible to justify the limit from the discrete approximation of \eqref{syst:JinXin}  to the discrete counterpart of \eqref{syst:CL} as $\varepsilon$ approaches zeros using the discrete frequency framework established here. The challenge further involves formulating product laws to handle the nonlinearity $f(\rho^\varepsilon)$ which, in the simplest case, reads as $(\rho^\varepsilon)^2$.

\end{enumerate}
\appendix
\section{Various lemmata} 
\subsection{Proof of Lemma \ref{lem:corrector}.} \label{A:lem:correct}
\begin{proof}
Differentiating $\cI(t)$ in time, we obtain
\begin{equation}\label{TimederivativeCorrector}
\begin{aligned}
  \frac d{dt}\cI(t) + \sum_{k=1}^{N-1} \varepsilon_k\|BA^k\ID_hU(t)\|_{l_h^2}^2 
 =& -\sum_{k=1}^{N-1} \varepsilon_k\bigl(BA^{k-1} BU,BA^k\ID_hU\bigr)_{l_h^2}
  \\&-\sum_{k=1}^{N-1} \varepsilon_k \bigl(BA^{k-1}U,BA^k B \ID_hU\bigr)_{l_h^2}
  \\&-\sum_{k=1}^{N-1} \varepsilon_k \bigl(BA^{k-1}U, BA^{k+1}\ID_h^2 U\bigr)_{l_h^2}\cdotp
\end{aligned}
\end{equation}
 To deal with the remainder terms, we proceed as in \cite{BZ,CBD2,Handbook} with some adaptations regarding the discrete setting. First, we fix a positive constant $\varepsilon_0$ and estimate the terms in the right-hand side of \eqref{TimederivativeCorrector} as follows. 
\begin{itemize}
\item The terms $\cI^1_k:= \varepsilon_k\bigl(BA^{k-1}B  U, BA^k\ID_h U\bigr)_{l_h^2}$
with $k\in\{1,\cdots, N-1\}$: due to $B U=\widetilde{B}  U_{2}$ and the fact that the matrices $A$, $B$ are bounded operators, we obtain
$$
 |\cI^1_k| \leq C\varepsilon_k\|\widetilde{B} U_{2}(t)\|_{l_h^2} \|BA^k \ID_h U(t)\|_{l_h^2} 
\leq \frac{\varepsilon_0}{4N}{\| U_{2}(t)\|_{l_h^2}^2}+\frac{C\varepsilon_k^2}{\varepsilon_0}\| BA^k   \ID_hU(t)\|_{l_h^2}^2.
$$
\item  The term $\cI^2_1:= \varepsilon_1\bigl(BU, BA B \ID_hU\bigr)_{l_h^2}$: one has
\begin{equation}\nonumber
|\cI^2_1| \leq C\varepsilon_1 \|\widetilde{B}U_{2}(t)\|_{l_h^2}\| \widetilde{B}\ID_hU_{2}(t)\|_{l_h^2}\\
\leq \frac{\varepsilon_0}{4N} \|U_{2}(t)\|_{l_h^2}^2+ \frac{C\varepsilon_1^2}{\var_{0}} \|\ID_hU_{2}(t)\|_{l_h^2}^2.
\end{equation}
\item The terms  $\cI^2_k:=\varepsilon_k\bigl(BA^{k-1}  U, BA^k B \ID_hU\bigr)_{l_h^2}$ with  $k\in\{2,\cdots, N-1\}$ if $N\geq3$: we deduce, after integrating by parts, that
$$\begin{aligned}
 |\cI^2_k|&=\varepsilon_k |\bigl(BA^{k-1}  \ID_h U, BA^k B U\bigr)_{\ell^2_h} |\leq C\varepsilon_k \|BA^{k-1} \ID_h U(t)\|_{l_h^2}\|B U(t)\|_{l_h^2}\\
&\leq \frac{\varepsilon_0}{4N}{\|  U_{2}(t)\|_{l_h^2}^2}+\frac{C\varepsilon_{k}^2}{\varepsilon_0}\| BA^{k-1}   \ID_hU(t)\|_{l_h^2}^2.
\end{aligned}$$
\item The terms $\cI^3_k:= \varepsilon_k \bigl( BA^{k-1}  U,  BA^{k+1}  \ID^2_hU\bigr)_{l_h^2}$ with $k\in\{1,\cdots, N-2\}$ if $N\geq3$: a similar argument yields
$$
|\cI^3_k|=\varepsilon_k | \bigl( BA^{k-1}  \ID_h U,  BA^{k+1}  \ID_h U\bigr)_{l_h^2}|
\leq \frac{\varepsilon_{k-1}}{8}  \|BA^{k-1}  \ID_hU(t)\|_{l_h^2}^2 +\frac{C\varepsilon_k^2}{\varepsilon_{k-1}}  \| BA^{k+1} \ID_h U(t)\|_{l_h^2}^2. 
$$
\item  The term $\cI^3_{N-1}:= \varepsilon_{N-1} \bigl( BA^{N-2}  U,  BA^{N}  \ID_h^2U\bigr)_{l_h^2}$:
owing to the Cayley-Hamilton theorem, there exist coefficients
$c_{*}^j$ ($j=0,1,2,...,N-1$) such that
\begin{align}
    &A^N =\sum_{j=0}^{N-1}  c_{*}^j A^j.\label{CH}
\end{align}
Consequently, one gets
$$\begin{aligned}
|\cI^3_{N-1}|&\leq \varepsilon_{N-1}\sum_{j=0}^{N-1}c_{*}^j \| BA^{N-2}  \ID_hU(t)\|_{l_h^2}
\|BA^j  \ID_hU(t)\|_{l_h^2} \\
&\leq  \frac{\varepsilon_{N-2}}{8}\| BA^{N-2}  \ID_hU(t)\|_{l_h^2}^2+\sum_{j=1}^{N-1} \frac{C\varepsilon_{N-1}^2}{\varepsilon_{N-2}}\| BA^j  \ID_hU(t)\|_{l_h^2}^2+\frac{C\varepsilon_{N-1}^2}{\varepsilon_{N-2}} \|\ID_h U_{2}(t)\|_{l_h^2}^2 .
\end{aligned} $$
\end{itemize}
In order to absorb the right-hand side terms of $\mathcal{I}_k^1$ and $\mathcal{I}_k^2$ by the left-hand side of \eqref{TimederivativeCorrector}, we take the constant  $\varepsilon_k$ small enough so that 
\begin{equation}\label{eq:e1}
C\varepsilon_1^2\leq \frac{\var_{0}^2}{8},\quad\quad C\varepsilon_k^2\leq \frac{\varepsilon_k\var_{0}}{8},\quad\quad k=1,2,...,N-1. \end{equation}
To handle the above estimates of  $\cI^3_k$ with $k=1,2,...,N-2$, one may let
\begin{equation}\label{eq:e11}
 C\varepsilon_k^2\leq \frac{1}{8}\varepsilon_{k-1}\varepsilon_{k+1},\quad\quad k=1,2,...,N-2\quad\text{if}~N\geq3.
 \end{equation}
In addition, to handle the term $\cI^3_{N-1}$, we assume 
\begin{equation}\label{eq:e2}
C\varepsilon_{N-1}^2\leq \frac{1}{8}\varepsilon_{j}\varepsilon_{N-2},\qquad j=0,\cdots,N-1.
\end{equation}
Clearly, the inequality \eqref{ddtI} holds if we find $\varepsilon_1,\cdots,\varepsilon_{N-1}$ fulfilling 
\eqref{eq:e1} -- \eqref{eq:e2}. 
As in \cite{BZ}, one can take  $\varepsilon_k=\varepsilon^{m_k}$ with some suitably small constant  $\varepsilon\leq \var_{0}$  and 
$m_1,\cdots,m_{N-1}$ satisfying for some $\delta>0$ (that can be taken arbitrarily small): 
$$m_{k}>1,\quad m_k\geq \frac{m_{k-1}+m_{k+1}}2+\delta \quad \text{and} \quad
m_{N-1}\geq\frac{m_{k}+m_{N-2}}2+ \delta, \quad k=1,\cdots,N-2.$$
This concludes the proof of Lemma \ref{lem:corrector}.
\end{proof}

\subsection{Technical lemmata}
\begin{lemma}[{\cite[Lemma A.1]{CBD2}}]\label{SimpliCarre}
Let $X : [0,T]\to \mathbb{R}_+$ be a continuous function such that $X^2$ is differentiable. Assume that there exist 
 a constant $\beta\geq 0$ and a measurable function $ \alpha: [0,T]\to \mathbb{R}_+$ 
such that 
 $$\frac{1}{2}\frac{d}{dt}X^2+\beta X^2\leq \alpha X\quad\hbox{a.e.  on }\ [0,T].$$ 
 Then, for all $t\in[0,T],$ we have
$$X(t)+\beta\int_0^tX\leq X_0+\int_0^t\alpha.$$
\end{lemma}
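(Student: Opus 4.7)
The plan is to use the standard regularization trick of replacing $X$ by $X_{\varepsilon}:=\sqrt{X^{2}+\varepsilon}$ for $\varepsilon>0$, which turns a potentially non-smooth function (because $X$ can vanish while $X^{2}$ is differentiable) into a smooth positive function, thereby legitimizing division.

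First, I would observe that since $X^{2}$ is differentiable, so is $X_{\varepsilon}^{2}=X^{2}+\varepsilon$, and moreover $X_{\varepsilon}\geq\sqrt{\varepsilon}>0$, so $X_{\varepsilon}$ itself is differentiable with
\[
X_{\varepsilon}\,\frac{d}{dt}X_{\varepsilon}=\frac{1}{2}\frac{d}{dt}X_{\varepsilon}^{2}=\frac{1}{2}\frac{d}{dt}X^{2}.
\]
Plugging this into the hypothesis and using that $X\leq X_{\varepsilon}$ together with $\alpha\geq 0$ (so that $\alpha X\leq \alpha X_{\varepsilon}$) and $X^{2}=X_{\varepsilon}^{2}-\varepsilon$, one gets
\[
X_{\varepsilon}\,\frac{d}{dt}X_{\varepsilon}+\beta X_{\varepsilon}^{2}\leq \alpha X_{\varepsilon}+\beta\varepsilon\quad\text{a.e. on }[0,T].
\]
Dividing by $X_{\varepsilon}\geq\sqrt{\varepsilon}$ yields the linear differential inequality
\[
\frac{d}{dt}X_{\varepsilon}+\beta X_{\varepsilon}\leq \alpha+\beta\sqrt{\varepsilon}.
\]

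Next, I would integrate this inequality in time from $0$ to $t$, obtaining
\[
X_{\varepsilon}(t)+\beta\int_{0}^{t}X_{\varepsilon}(s)\,ds\leq X_{\varepsilon}(0)+\int_{0}^{t}\alpha(s)\,ds+\beta\sqrt{\varepsilon}\,t.
\]
Since $X_{\varepsilon}\to X$ pointwise and monotonically from above as $\varepsilon\downarrow 0$, and since $X_{\varepsilon}(0)\to X(0)=X_{0}$, passing to the limit $\varepsilon\to 0$ (using monotone convergence for the time integral and the fact that $\beta\sqrt{\varepsilon}\,t\to 0$) gives the desired conclusion
\[
X(t)+\beta\int_{0}^{t}X(s)\,ds\leq X_{0}+\int_{0}^{t}\alpha(s)\,ds.
\]

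The only subtlety — and the mild ``obstacle'' worth flagging — is that one cannot simply divide the original inequality by $X$ to deduce $\frac{d}{dt}X+\beta X\leq \alpha$, because $X$ may vanish and $\frac{d}{dt}X$ need not exist at such points; the $\varepsilon$-regularization above is precisely what sidesteps this issue, and the harmless extra term $\beta\sqrt{\varepsilon}\,t$ disappears in the limit. No other nontrivial ingredient is needed.
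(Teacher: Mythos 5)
Your proof is correct and follows essentially the same route as the cited source: the paper does not reprove this lemma (it quotes it from \cite[Lemma A.1]{CBD2}), and the $\sqrt{X^2+\varepsilon}$ regularization you use is precisely the standard argument behind it --- the same device the authors invoke in Section \ref{sec:proof-asymptotic} when they divide \eqref{ddtU22} by $\sqrt{\|U_2(t)\|_{\ell^2_h}^2+\epsilon}$ and let $\epsilon\to0$.
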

\begin{lemma}
    \label{A:lem:estimateExponential}
    Let $\lambda$ be a positive constant. There exists a constant $C=C(\lambda)>0$ such that, for every $t\in(0,\infty)$,
    \[I(t):=\int_{0}^{t}e^{-\lambda (t-\tau)}(1+\tau)^{-\frac{1}{2}}\,\dd\tau\leq C(1+t)^{-\frac{1}{2}}.\]
\end{lemma}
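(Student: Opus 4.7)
\textbf{Proof proposal for Lemma \ref{A:lem:estimateExponential}.} The plan is to use the standard splitting trick at the midpoint $t/2$, separating the contribution where the weight $(1+\tau)^{-1/2}$ is large from the region where the exponential kernel is small. For $t$ bounded away from $0$ and $+\infty$, the integral is a bounded continuous function of $t$ and the desired inequality holds trivially by adjusting the constant; so the only substantive work concerns the regime $t\to\infty$.

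First I would split
\[
I(t)=\int_{0}^{t/2}e^{-\lambda(t-\tau)}(1+\tau)^{-1/2}\,\dd\tau+\int_{t/2}^{t}e^{-\lambda(t-\tau)}(1+\tau)^{-1/2}\,\dd\tau=:I_1(t)+I_2(t).
\]
On the interval $[0,t/2]$ we have $t-\tau\geq t/2$, so
\[
I_1(t)\leq e^{-\lambda t/2}\int_{0}^{t/2}(1+\tau)^{-1/2}\,\dd\tau\leq 2e^{-\lambda t/2}\sqrt{1+t/2}.
\]
Since the exponential beats any polynomial, there exists a constant $C_1=C_1(\lambda)$ such that $2e^{-\lambda t/2}\sqrt{1+t/2}\leq C_1(1+t)^{-1/2}$ for all $t\geq 0$.

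For the second piece, on $[t/2,t]$ we have $\tau\geq t/2$, hence $1+\tau\geq 1+t/2\geq (1+t)/2$, which gives $(1+\tau)^{-1/2}\leq\sqrt{2}\,(1+t)^{-1/2}$. Consequently,
\[
I_2(t)\leq\sqrt{2}\,(1+t)^{-1/2}\int_{t/2}^{t}e^{-\lambda(t-\tau)}\,\dd\tau\leq\frac{\sqrt{2}}{\lambda}(1+t)^{-1/2}.
\]
Summing both contributions yields $I(t)\leq C(\lambda)(1+t)^{-1/2}$ for all $t>0$, which is the claim.

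There is no genuine obstacle here: the only delicate point is the inequality $1+t/2\geq(1+t)/2$ which requires $t\geq 0$ (true in our range), and the fact that exponential decay dominates the mild growth $\sqrt{1+t/2}$ in $I_1$. Both are elementary, and no feature of the discrete framework intervenes in this purely one-variable estimate.
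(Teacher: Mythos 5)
Your proof is correct, but it takes a different (and more direct) route than the paper. You apply the midpoint splitting at $t/2$ directly to $I(t)$: on $[0,t/2]$ the kernel satisfies $e^{-\lambda(t-\tau)}\leq e^{-\lambda t/2}$ and the exponential absorbs the mild growth $\sqrt{1+t/2}$, while on $[t/2,t]$ the weight satisfies $(1+\tau)^{-1/2}\leq \sqrt 2\,(1+t)^{-1/2}$ and the exponential integrates to at most $1/\lambda$. The paper instead first integrates by parts, writing
\[
I(t)=\frac{1}{\lambda}(1+t)^{-\frac12}-\frac{1}{\lambda}e^{-\lambda t}+\frac{1}{2\lambda}\int_0^t e^{-\lambda(t-\tau)}(1+\tau)^{-\frac32}\,\dd\tau,
\]
which isolates the leading term $\frac1\lambda(1+t)^{-1/2}$ explicitly, and only then applies the midpoint split to the remainder integral with the integrable weight $(1+\tau)^{-3/2}$. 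The two arguments are of comparable length; yours avoids the integration by parts and is slightly more elementary, whereas the paper's version exhibits the sharp leading constant $1/\lambda$ directly. Both are complete and correct, and all your intermediate inequalities (in particular $1+t/2\geq(1+t)/2$ for $t\geq0$ and the domination of $\sqrt{1+t/2}$ by $e^{\lambda t/2}(1+t)^{-1/2}$ up to a constant) check out.
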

\begin{proof}
An integration-by-parts argument leads to
\[I(t)=\frac{1}{\lambda}(1+t)^{-\frac{1}{2}}-\frac{1}{\lambda}e^{-\lambda t}+\frac{1}{2\lambda}\int_0^t e^{-\lambda(t-\tau)} \frac{1}{(1+\tau)^\frac{3}{2}}\,\dd\tau.\]
The conclusion follows since the last term in the previous inequality can be bounded from above by
\[\int_0^{\frac{t}{2}} e^{-\lambda(t-\tau)}\,\dd\tau+ \int_{\frac{t}{2}}^t \frac{1}{(1+\tau)^\frac{3}{2}}\,\dd\tau.\]
\end{proof}

\noindent\textbf{Acknowledgments}
\smallbreak
\noindent The authors are grateful to Prof. Enrique Zuazua for his
valuable comments on a preliminary version of this paper. 
\smallbreak
\noindent\textbf{Funding}
\smallbreak
\noindent T. Crin-Barat has been funded by the Alexander von Humboldt-Professorship program and the Transregio 154 Project “Mathematical Modelling, Simulation and Optimization Using the Example of Gas Networks” of the DFG. 

D. Manea has been partially supported by the Romanian Ministry of Research, Innovation and Digitization, CNCS - UEFISCDI, project number PN-III-P1-1.1-TE-2021-1539. 

Part of this research was carried out while D. Manea was visiting the Chair of Dynamics, Control, Machine Learning and Numerics. The aforementioned research visit was funded by the Cost Action CA18232 MAT-DYN-NET, supported by COST (European Cooperation in Science and Technology).



\bibliographystyle{agsm}
\bibliography{ref}

@article{PorrettaZuazua2016,
    AUTHOR = {Porretta, Alessio and Zuazua, Enrique},
     TITLE = {Numerical hypocoercivity for the {K}olmogorov equation},
   JOURNAL = {Math. Comp.},
    VOLUME = {86 (303), 2017, 97-119},
    year = {2016},
}

@Article{BZ,
title = {Large Time Asymptotics for Partially Dissipative Hyperbolic Systems},
author = {Beauchard, K. and Zuazua, E.},
journal = {Arch. Ration. Mech. Anal},
volume = {199},
year = {2011},
pages = {177–227},
}

@Article{CBD1,
title = {Partially dissipative one-dimensional hyperbolic systems in the critical regularity setting, and applications},
author = {T. Crin-Barat and R. Danchin},
journal = {Pure and Applied Analysis},
 VOLUME = {4},
      YEAR = {2022},
    NUMBER = {1},
     PAGES = {85-125},
}

@Article{CBD2,
title = {Partially dissipative
hyperbolic systems in the critical regularity setting : {T}he multi-dimensional case},
author = {T. Crin-Barat and R. Danchin},
journal = {J. Math. Pures Appl. (9)},
year = {2022},
volume = {165},
pages = {1-41},
}

@Article{CBD3,
title = {Global existence  for partially dissipative hyperbolic systems in the $\textsc{L}^p$ framework, and relaxation limit},
author = {T. Crin-Barat and R. Danchin},
journal = {Math. Ann.},
volume = {386, 2159-2206},
year = {2023},
}

@Article{BHN,
title = {Asymptotic behavior of smooth solutions for partially dissipative hyperbolic systems with a convex entropy},
author = {S. Bianchini and B. Hanouzet and R. Natalini},
journal = {Comm. Pure Appl. Math.},
volume = {60, 1559-1622},
year = {2007},
}

@Article{Handbook,
title = {Fourier Analysis Methods for the Compressible {N}avier-{S}tokes Equations},
author = {Danchin, R.},
journal = {in : Giga Y., Novotný A. (eds) Handbook of Mathematical Analysis in Mechanics of Viscous Fluids. Springer, Cham},
year = {2018},
}

@book{HJR,
    author    = "Bahouri, H. and Chemin, J.-Y. and Danchin, R.",
    title     = "Fourier Analysis and Nonlinear Partial Differential Equations",
    SERIES = {Grundlehren der Mathematischen Wissenschaften},
    VOLUME = {343},
 PUBLISHER = {Springer, Heidelberg},
      YEAR = {2011},
}

@Article{SK,
title = {Systems of equations of hyperbolic-parabolic type with applications to the discrete {B}oltzmann equation},
author = {S. Shizuta and S. Kawashima},
journal = {Hokkaido Math. J.},
volume = {14, 249-275},
year = {1985},
}

@Article{Yong,
title = {Entropy and global existence for hyperbolic balance laws},
author = {Yong, W.-A},
journal = {Arch. Ration. Mech. Anal},
volume = {172, 47–266},
year = {2004},
}

@book{Villani,
title = {Hypocoercivity},
author = {C. Villani},
 publisher = {Mem. Am. Math. Soc.},
year = {2010},
}

@Article{HanouzetNatalini,
title = {Global existence of smooth solutions for partially dissipative hyperbolic systems with convex entropy.},
author = {B. Hanouzet and R. Natalini},
journal = {Arch. Ration. Mech. Anal.},
volume = {169, 89-117},
year = {2003},
}

@Article{JinXin,
title = {The relaxation schemes for systems of conservation laws in arbitrary space dimensions},
author = {S. Jin and Z. Xin},
journal = {Commun. Pure Appl. Math.},
year = {1995},
volume = {48,  235-276},
}

@Article{YongIllPrep,
title = {Singular Perturbations of First-Order Hyperbolic
Systems with Stiff Source Terms},
author = {Yong, W.-A},
journal = {J. Differential Equations},
volume = {155, 89-132},
year = {1999},
}

@article {HerauNier,
    AUTHOR = {H\'{e}rau, Fr\'{e}d\'{e}ric and Nier, Francis},
     TITLE = {Isotropic hypoellipticity and trend to equilibrium for the
              {F}okker-{P}lanck equation with a high-degree potential},
   JOURNAL = {Arch. Ration. Mech. Anal.},
  FJOURNAL = {Archive for Rational Mechanics and Analysis},
    VOLUME = {171},
      YEAR = {2004},
    NUMBER = {2},
     PAGES = {151--218},
}

@Article{Herau,
title = {Short and long time behavior of the {F}okker-{P}lanck equation in a confining potential and application},
author = {F. H\'{e}rau},
journal = {J. Func. Anal.},
volume = {244},
year = {2007},
pages = {95-118},
}

@Article{CBSJX,
title = {Diffusive relaxation limit of the multi-dimensional Jin-Xin system},
author = {T. Crin-Barat and L. Shou},
journal = {Journal of Differential Equations},
volume = {357},
year = {2023},
pages = {302-331},
}

@Article{CBSZ,
title = {Large time asymptotic for partially dissipative hyperbolic systems without Fourier analysis: application to the nonlinearly damped p-system},
author = {T. Crin-Barat and L. Shou and E. Zuazua},
journal = {Accepted in Annales de l'Institut Henri Poincaré C, Analyse non linéaire. arXiv:2308.08280},
year= {2024} 
}

@book{Trefethen1994,
title={Finite Difference
and Spectral Methods for Ordinary and Partial Differential Equations},
author={Trefethen, Lloyd N.},
Publisher = {Cornell University},
year = {1994},
}

@book {Strikwerda2004,
    AUTHOR = {Strikwerda, John C.},
     TITLE = {Finite difference schemes and partial differential equations},
   EDITION = {Second},
 PUBLISHER = {Society for Industrial and Applied Mathematics (SIAM),
              Philadelphia, PA},
      YEAR = {2004},
     PAGES = {xii+435},
      ISBN = {0-89871-567-9},
   MRCLASS = {65-02 (00-01 65M06 65N06)},
  MRNUMBER = {2124284},
       DOI = {10.1137/1.9780898717938}
}

@Article{ShiJinReview,
title = {Asymptotic preserving (AP) schemes for multiscale kinetic and hyperbolic equations: a review},
author = {S. Jin},
journal = {Riv. Mat. Univ. Parma.},
volume = {3},
year = {2012},
pages = {177-216},
}

@Article{DanchinEMS,
title = {Partially dissipative systems in the critical regularity setting, and strong relaxation limit},
author = {R. Danchin},
journal = {EMS Surveys in Mathematical Sciences},
volume = {9},
numero = {1},
year = {2023},
pages = {135-192},
}

@Article{BlausteinFilbet24,
title = {On a discrete framework of hypocoercivity for kinetic equations},
author = {A. Blaustein and F. Filbet},
journal = {Math. Comp.},
volume = {93},
year = {2024},
pages = {163-202},
}

@Article{Georgoulis21,
title = {Hypocoercivity-compatible  finite element methods for the long-time computation of {K}olmogorov's equation},
author = {E.H. Georgoulis},
journal = {SIAM Journal on Numerical Analysis},
volume = {59},
numero = {1},
year = {2021},
pages = {173-194},
}

@Article{DujardinHérauLafitte20,
title = {Coercivity, hypocoercivity, exponential time decay and simulations for discrete
{F}okker–{P}lanck equations},
author = {G. Dujardin and F. Hérau and P. Lafitte},
journal = {Numerische Mathematik},
volume = {144},
numero = {3},
year = {2020},
pages = {615-697},
}

@Article{FilbetRodrigues17,
title = {Asymptotically preserving particle-in-cell methods for inhomogeneous strongly magnetized plasmas},
author = {F. Filbet and L.M. Rodrigues},
journal = {SIAM Journal on Numerical Analysis},
volume = {55},
numero = {5},
year = {2017},
pages = {2416-2443},
}

@Article{FilbetRodriguesZakerzadeh21,
title = {Convergence analysis of asymptotic preserving schemes for strongly magnetized plasmas},
author = {F. Filbet and L.M. Rodrigues and H. Zakerzadeh},
journal = {Numerische Mathematik},
volume = {149},
numero = {3},
year = {2021},
pages = {549-593},
}

@Article{FosterLohéacTran17,
title = {A structure preserving scheme for the {K}olmogorov–{F}okker–{P}lanck equation},
author = {E.L. Foster and J. Lohéac and M.-B. Tran},
journal = {Journal of Computational Physics},
volume = {330},
year = {2017},
pages = {319-339},
}

@Article{Bessemoulin-ChatardHerdaRey20,
title = {Hypocoercivity and diffusion limit of a finite volume scheme for linear
kinetic equations},
author = {M. Bessemoulin-Chatard and M. Herda and T. Rey},
journal = {Math. Comp.},
volume = {89},
numero = {323},
year = {2020},
pages = {1093–1133},
}

@Article{Jin10,
title = {Asymptotic preserving (ap) schemes for multiscale kinetic and hyperbolic equations: a review},
author = {S. Jin},
journal = {A Lecture notes for
summer school on methods and models of kinetic theory (M\&MKT), Porto Ercole (Grosseto, Italy)},
year = {2010},
pages = {177-216},
}

@Article{JinPareschiToscani00,
title = {Uniformly accurate diffusive relaxation schemes for multiscale transport equations},
author = {S. Jin and  L. Pareschi and G. Toscani},
journal = {SIAM Journal on Numerical Analysis},
volume = {38},
numero = {3},
year = {2000},
pages = {913-936},
}

@Article{LemouMieussens,
title = {A new asymptotic preserving scheme based on micro-macro formulation for linear kinetic
equations in the diffusion limit},
author = {M. Lemou and L. Mieussens},
journal = {SIAM Journal on Scientific Computing},
volume = {31},
numero = {1},
year = {2008},
pages = {334-368},
}

@Article{DMS15,
title = {Numerical schemes for hyperbolic conservation laws with stiff relaxation terms},
author = {J. Dolbeault and C. Mouhot and C. Schmeiser.},
journal = {Trans. Amer.
Math. Soc.},
volume = {367},
numero = {6},
year = {2015},
pages = {3807–3828},
}

@article{HongYang2019,
title = {Uniform {S}trichartz estimates on the lattice},
journal = {Discrete and Continuous Dynamical Systems},
volume = {39},
number = {6},
pages = {3239-3264},
year = {2019},
issn = {1078-0947},
doi = {10.3934/dcds.2019134},
author = {Younghun Hong and Changhun Yang}
}

@article{Coulombel09,
title = {Stability of finite difference schemes for hyperbolic initial boundary value
problems},
author = {J.-F. Coulombel},
journal = {SIAM Journal on Numerical Analysis},
volume = {47},
number = {4},
pages = {2844-2871},
year = {2009},
}

@article{BoscarinoRusso09,
title = {On a class of uniformly accurate \textsc{IMEX} \textsc{R}unge-\textsc{K}utta schemes and applications
to hyperbolic systems with relaxation},
author = {S. Boscarino and A. Russo},
journal = {SIAM J. Sci. Compt.},
volume = {31},
number = {3},
pages = {1926-1945},
year = {2009},
}

@article{GoudonJinLiuYan13,
title = {Asymptotic-preserving schemes for kinetic–fluid modeling of disperse two-phase flows},
author = {T. Goudon and S. Jin and J.G. Liu and B. Yan},
journal = {Journal of Computational Physics},
volume = {246},
pages = {145-164},
year = {2013},
}

@article{CoulombelFaye23,
title = {Sharp stability for finite difference approximations of hyperbolic equations with boundary conditions},
author = {J.-F. Coulombel and G. Faye},
journal = {IMA Journal of Numerical Analysis},
volume = {43},
issue = {1},
pages = {187–224},
year = {2023},
}

@article{Trefethen84,
title = {Instability of difference models for hyperbolic initial boundary value problems},
author = {L.N. Trefethen},
journal = {Comm. Pure Appl. Math.},
volume = {37},
pages = {329-367},
year = {1984},
}

@article{DimarcoPareschi14,
title = {Numerical methods for kinetic equations},
author = {G. Dimarco and L. Pareschi},
journal = {Acta Numerica},
volume = {23},
pages = {369-520},
year = {2014},
}

@article{Degond13,
title = {Asymptotic-Preserving Schemes for Fluid Models of Plasmas},
author = {P. Degond},
journal = {Panoramas \& Synthèses,},
volume = {39–40},
pages = {1-92},
year = {2013},
}

@article{Jin22,
title = {Asymptotic-preserving schemes for multiscale physical problems},
author = {S. Jin},
journal = {Acta Numerica},
volume = {31},
pages = {415-489},
year = {2022},
}

@article{MaHuangYong2023,
      title={Uniform accuracy of implicit-explicit backward \newline differentiation formulas ({IMEX-BDF}) for linear hyperbolic relaxation systems}, 
      author={Zhiting Ma and Juntao Huang and Wen-An Yong},
      year={2023},
journal={arXiv:2310.05342},
}

@Article{OriveZuazua06,
title = {Long-time behavior of solutions to a nonlinear hyperbolic relaxation system},
author = {R. Orive and E. Zuazua},
journal = {J. Differential Equations},
volume = {228},
issue = {11},
year = {2006},
pages = {17-38},
}

@Article{HuShu2023uniform,
      title={Uniform accuracy of implicit-explicit {R}unge-{K}utta ({IMEX-RK}) schemes for hyperbolic systems with relaxation}, 
      author={Jingwei Hu and Ruiwen Shu},
      year={2024},
journal={Published electronically in Math. Comp.},
}

@Article{BoscarinoRusso24,
      title={Asymptotic preserving methods for quasilinear hyperbolic systems with stiff relaxation: a review}, 
      author={S. Boscarino and G. Russo},
      year={2024},
      journal={SeMA},
      doi={https://doi.org/10.1007/s40324-024-00351-x},
}

@Article{Hunter:2007,
  Author    = {Hunter, J. D.},
  Title     = {Matplotlib: A 2D graphics environment},
  Journal   = {Computing in Science \& Engineering},
  Volume    = {9},
  Number    = {3},
  Pages     = {90--95},
  abstract  = {Matplotlib is a 2D graphics package used for Python for
  application development, interactive scripting, and publication-quality
  image generation across user interfaces and operating systems.},
  publisher = {IEEE COMPUTER SOC},
  doi       = {10.1109/MCSE.2007.55},
  year      = 2007
}

@Article{harris2020array,
 title         = {Array programming with {NumPy}},
 author        = {Charles R. Harris and K. Jarrod Millman and St{\'{e}}fan J.
                 van der Walt and Ralf Gommers and Pauli Virtanen and David
                 Cournapeau and Eric Wieser and Julian Taylor and Sebastian
                 Berg and Nathaniel J. Smith and Robert Kern and Matti Picus
                 and Stephan Hoyer and Marten H. van Kerkwijk and Matthew
                 Brett and Allan Haldane and Jaime Fern{\'{a}}ndez del
                 R{\'{i}}o and Mark Wiebe and Pearu Peterson and Pierre
                 G{\'{e}}rard-Marchant and Kevin Sheppard and Tyler Reddy and
                 Warren Weckesser and Hameer Abbasi and Christoph Gohlke and
                 Travis E. Oliphant},
 year          = {2020},
 month         = sep,
 journal       = {Nature},
 volume        = {585},
 number        = {7825},
 pages         = {357--362},
 doi           = {10.1038/s41586-020-2649-2},
 publisher     = {Springer Science and Business Media {LLC}},
}

\vfill 

\end{document}